
%
%
%

\documentclass[12pt]{amsart} 
\usepackage{amsmath,amssymb,amsthm,kpfonts}
\usepackage{ifthen,verbatim}

\usepackage[urlcolor=blue,colorlinks=true]{hyperref}

\usepackage{floatflt,float,graphicx,graphics}
\usepackage{mathrsfs}

\usepackage{tikz}

\numberwithin{equation}{section}
\setlength{\textwidth}{16cm}
\setlength{\oddsidemargin}{0cm}
\nonstopmode
\setlength{\evensidemargin}{0cm}
\setlength{\footskip}{40pt}

\theoremstyle{plain}
\newtheorem{cor}[equation]{Corollary}

\newtheorem{theorem}[equation]{Theorem}

\newtheorem{lem}[equation]{Lemma}
\newtheorem{lemma}[equation]{Lemma}
\newtheorem{prop}[equation]{Proposition}
\newtheorem{proposition}[equation]{Proposition}

\newtheorem{thm}[equation]{Theorem}
\theoremstyle{definition}
\newtheorem{rem}[equation]{Remark}
\newtheorem{remark}[equation]{Remark}

\newtheorem{definition}[equation]{Definition}
\newtheorem{nonsec}[equation]{}

\newenvironment{pf}[1][]{%
 \vskip 3mm
 \noindent
 \ifthenelse{\equal{#1}{}}%
  {{\slshape Proof. }}%
  {{\slshape #1.} }%
 }%
{\qed\bigskip}

\newcounter{alphabet}

\newcommand{\be}{\begin{eqnarray}}
\newcommand{\ee}{\end{eqnarray}}
\newcommand{\ba}{\begin{array}}
\newcommand{\ea}{\end{array}}
\newcommand{\ben}{\begin{eqnarray*}}
\newcommand{\een}{\end{eqnarray*}}

\newcommand{\diam}{{\operatorname{d}}}
\newcommand{\card}{{\operatorname{card}\,}}

\newcommand{\R}{{\mathbb R}}

\newcommand{\bB}{\overline{B}}   
\newcommand{\B}{\mathbb{B}}

\newcommand{\capa}{\mathrm{cap}\,}

\newcommand{{\tth}}{\mathrm{th}}

\newcommand{\mb}{\mathbb}

\newcommand {\Bbn} {{\overline{B}^n}}
\newcommand {\Sn} {{\overline{\mathbb R}^n}}

\newcommand {\aand} {\quad\text{and}\quad}
\newcommand {\mM} {\mathsf{M}}
\newcommand {\M} {\mathsf{M}}
\newcommand {\UP} {{\rm UP}}
\renewcommand {\mod} {{\rm mod}}


\newcounter{minutes}\setcounter{minutes}{\time}
\divide\time by 60
\newcounter{hours}\setcounter{hours}{\time}
\multiply\time by 60
\addtocounter{minutes}{-\time}

\begin{document}
\bibliographystyle{amsplain}
\title
{Uniformly Perfect Sets, Hausdorff Dimension, and Conformal Capacity}

\makeatletter\def\thefootnote{\@arabic\c@footnote}\makeatother

\author[O. Rainio]{Oona Rainio}
\address{Department of Mathematics and Statistics, 
University of Turku, FI-20014 Turku, Finland}
\email{ormrai@utu.fi
 \newline ORCID ID: \url{http://orcid.org/0000-0002-7775-7656}}
\author[T. Sugawa]{Toshiyuki Sugawa}
\address{Graduate School of Information Sciences,
Tohoku University, Aoba-ku, Sendai 980-8579, Japan}
\email{sugawa@math.is.tohoku.ac.jp  \newline ORCID ID:\url{https://orcid.org/0000-0002-3429-5498}}
\author[M. Vuorinen]{Matti Vuorinen}
\address{Department of Mathematics and Statistics, University of Turku,
 FI-20014 Turku, Finland}
\email{vuorinen@utu.fi \newline ORCID ID: 
 \url{http://orcid.org/0000-0002-1734-8228}}

\keywords{Condenser capacity, invariant metrics, modulus of a curve family, uniformly perfect set, Whitney cubes.}
\subjclass[2010]{Primary 30F45; Secondary 30C85}
\begin{abstract} Using the definition of uniformly perfect sets in terms of convergent sequences, we apply lower bounds for the Hausdorff content of a uniformly perfect subset $E$ of $\mathbb{R}^n$ to prove  new explicit lower bounds for the Hausdorff dimension of $E.$ These results also yield lower bounds for capacity test functions, which we introduce, and enable us to characterize domains of $\R^n\,$ with uniformly perfect boundaries. Moreover, we show that an alternative method to define capacity test functions can be based on the Whitney decomposition of the domain considered.
\end{abstract}

\thanks{Funding. 
The research of the first author was funded by Magnus Ehrnrooth Foundation. 
The research of the second and the third authors was supported in part by JSPS KAKENHI Grant Number JP17H02847.
}
\maketitle
\medskip
\hspace{8cm} {In memoriam: Pentti J\"arvi 1942- 2021}

\medskip

\medskip

\section{Introduction}
Conformal invariants, in particular the modulus 
of a curve family and the conformal capacity of a condenser, 
are fundamental tools of geometric function theory and 
quasiconformal mappings \cite{du,GM,GMP,HKV20,V71}. 
For applications, many upper and lower bounds for conformal invariants 
have been derived 
in terms of various geometric functionals. All this research 
shows that the metric structure of the boundary has a strong 
influence on the intrinsic geometry of the domain 
of the mappings studied. Indeed, many results originally proven 
for functions defined in the unit ball $\B^n $ of $\R^n, n\ge 2,$ 
can be extended to 
the case of subdomains $G \subset \R^n$ if the boundary $\partial G$ is 
"thick enough" in the sense of capacity, or, more precisely, if the 
boundary is uniformly perfect. The thickness of the boundary has a strong
influence on the intrinsic geometry of the domain and thus it also gives
a restriction on the oscillation of a function defined in $G.$
We give several new characterizations 
of uniformly perfect sets.

A \emph{condenser} is a pair $(G,E)$ where $G\subset\R^n$, $n\geq2$, 
is a domain and   $E\subset G$ is a compact set  \cite{du,GMP, HKV20}. 
A compact set $E\subset\R^n$ is of conformal capacity zero if, for 
some closed ball $B\subset \R^n \setminus E\,, $ the condenser 
$(\R^n \setminus B,E)$ has  capacity  zero, written as $\capa(E)=0$ 
with notations of Definition \ref{def_condensercapacity}. Sets of 
capacity zero are very thin, their Hausdorff dimensions are zero \cite[p.120, Cor. 2]{R2},
\cite[Lemma 9.11]{HKV20}, and they often have the role of 
negligible exceptional sets in potential theory or geometric 
function theory. Note that, due to the M\"obius invariance of 
the conformal capacity, the notions of zero and positive capacity 
immediately extend to compact subsets of the M\"obius space 
$\overline{\R}^n=\R^n\cup\{\infty\}$.

Here our goal is to study those subsets of $\R^n$ that have a positive 
capacity instead. However, the structure of sets of positive capacity 
can sometimes be highly dichotomic, for instance, in the case of 
$E_1\cup E_2$ where $E_1\subset\R^2$ is a segment and $E_2$ is a point 
not contained in $E_1$. This kind of a dichotomy makes working 
with these sets difficult, but  a subclass of sets with positive capacity, 
\emph{uniformly perfect sets}, has certain natural properties useful for
our purposes.
During the past two decades, uniformly perfect sets have become ubiquitous 
for instance in geometric function theory \cite{AW,GM}, analysis on
metric spaces \cite{H,Lew}, hyperbolic geometry \cite{BM,KL} and in the 
study of complex
dynamics and Kleinian groups \cite{BJ, SS}.

We begin by giving a variant of the definition of uniformly perfect
sets in terms of convergent sequences as follows.
For $0<c<1,$ let $\UP_n(c)$ denote the
collection of compact sets $E$ in $\mathbb{R}^n$ with 
$\card(E)\ge 2$ satisfying the condition
\begin{equation*}
\{x\in E: c r<|x-a|<r\}\ne\emptyset \quad\text{for all}~ a\in E
\aand 0<r< d(E)/2\,.
\end{equation*}
We say that a set is uniformly perfect if it is in the class $\UP_n(c)$
for some $c \in (0,1).$

The first lemma has an important role in the sequel.

\begin{lem} ({\rm T. Sugawa}  \cite[Proposition 7.4]{Sugawa98})\label{introHDimBd}
Let $E\in\UP_n(c)$ for some $0<c<1.$
Then for every $a\in E,~ a\ne\infty,$ the Hausdorff content of $E$
has the following lower bound
$$
\Lambda^\beta(E\cap\Bbn(a,r))\ge \frac{r^\beta}{2\cdot 3^n}, \quad 0<r<d(E)/2,
~\text{where}~ \beta=\frac{\log 2}{\log(3/c)}.
$$
Moreover,  the Hausdorff dimension $\dim_H(E)$ of
$E$ is at least $(\log 2)/\log(3/c).$
\end{lem}

The explicit bounds we obtain in this paper depend on the above result
of  Sugawa, however, we prove it  here with the above refined form of 
the constant $\beta.$
Moreover, we also apply ideas from the work of 
 Reshetnyak \cite{R1,R2} and   Martio \cite{M}, see also Remark \ref{Cartan},
but now the constants are explicit which is crucial for what follows. 
In the study of uniformly perfect sets, similar methods were also applied 
by  J\"arvi and Vuorinen \cite[Thm 4.1, p. 522]{JV96}.
 Our results here yield explicit constants for several characterizations of
 uniform perfectness such as the following main result. 
 
\begin{thm}\label{introCapBd}
Let $E\in\UP_n(c)$ for some $0<c<1.$
Then for every $a\in E,~ a\ne\infty,$ and all $r \in (0, d(E))$ 
the following lower bound for the conformal capacity ${\rm cap}(a,E,r)$
of the condenser $(B^n(a,2r), E \cap \overline{B}^n(a,r))$ holds
$$
{\rm cap}(a,E,r) \ge \frac{1}{2\cdot 3^n M_1(n,\beta)},\quad \beta=  
\frac{\log 2}{\log(3/c)}\,,
$$
where $M_1(n, \beta),$ given in \eqref{eq:M1b}, 
is an explicit constant depending only on $n$ and $\beta.$
\end{thm}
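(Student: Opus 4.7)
The plan is to combine Sugawa's Hausdorff content lower bound from Theorem~\ref{introHDimBd} with an explicit capacity--content comparison, keeping track of every numerical constant along the way.

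Step 1 (Hausdorff content of $F$). Set $F:=E\cap\overline{B}^n(a,r)$ and $\beta=(\log 2)/\log(3/c)$. For $r\in(0,d(E)/2)$ the hypothesis of Theorem~\ref{introHDimBd} is met and gives directly $\Lambda^\beta(F)\ge r^\beta/(2\cdot 3^n)$. For the remaining range $r\in[d(E)/2,d(E))$ one applies the same result at the smaller radius $r/2<d(E)/2$ and uses the inclusion $E\cap\overline{B}^n(a,r/2)\subset F$ to obtain $\Lambda^\beta(F)\ge (r/2)^\beta/(2\cdot 3^n)$. The extra factor $2^{-\beta}\ge 1/2$ in this second range will be absorbed into the final explicit constant $M_1(n,\beta)$.

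Step 2 (Capacity--content comparison). The heart of the argument is to establish, for any compact $F\subset\overline{B}^n(a,r)$, the general bound
\[
{\rm cap}(B^n(a,2r),F)\;\ge\;\frac{\Lambda^\beta(F)}{M_1(n,\beta)\,r^\beta},
\]
where $M_1(n,\beta)$ is the explicit constant defined in~\eqref{eq:M1b}. One takes an admissible function $u\in C_0^\infty(B^n(a,2r))$ with $u\ge 1$ on $F$, chooses a countable covering of $F$ by closed balls $\{\overline{B}^n(x_i,r_i)\}$ with $\sum_i r_i^\beta\le \Lambda^\beta(F)+\varepsilon$, and on each ball uses the classical closed-form $n$-capacity of the concentric ring $(B^n(x_i,2r),\overline{B}^n(x_i,r_i))$, which is a multiple of $(\log(2r/r_i))^{1-n}$. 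An application of H\"older's inequality across the covering, in the spirit of Reshetnyak~\cite{R1,R2} and Martio~\cite{M}, yields the stated inequality and simultaneously identifies $M_1(n,\beta)$ as an integral expression depending only on $n$ and $\beta$.

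Combining the two steps gives
\[
{\rm cap}(a,E,r)\;\ge\;\frac{\Lambda^\beta(F)}{M_1(n,\beta)\,r^\beta}\;\ge\;\frac{1}{2\cdot 3^n\,M_1(n,\beta)},
\]
which is the claim. The main obstacle is the careful bookkeeping in Step~2: standard Maz'ya/Frostman-type comparisons between conformal capacity and Hausdorff content are typically stated with unspecified constants, whereas here the challenge is to isolate a single \emph{explicit} $M_1(n,\beta)$ depending only on $n$ and $\beta$. This demands a concrete choice of radial test function whose $n$-energy can be evaluated in closed form and compared term by term against the Hausdorff covering, with the resulting integral being finite precisely because $\beta>0$ (i.e.\ because $E$ is uniformly perfect).
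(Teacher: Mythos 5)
Your two-step plan is exactly the paper's: Theorem~\ref{introCapBd} is deduced in the paper (as Theorem~\ref{HDimBd}) by combining the Hausdorff-content lower bound of Theorem~\ref{introHDimBd} (via Sugawa's Lemma~\ref{sug98}) with the Reshetnyak--Martio capacity-versus-content inequality, which the paper records as Theorem~\ref{thm:Mar79} together with the explicit constant \eqref{eq:M1b}. So Step~1 is on target, and the comparison you state in Step~2 is precisely what Theorem~\ref{thm:Mar79} provides.

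Your sketch of \emph{how} to prove the Step~2 comparison, however, would not close as written. Covering $F$ by closed balls nearly realizing $\Lambda^\beta(F)$, evaluating concentric ring capacities $\sim(\log(2r/r_i))^{1-n}$ for each ball, and summing by H\"older would, if anything, give an \emph{upper} bound for ${\rm cap}(B^n(a,2r),F)$ in terms of the covering data, which is the wrong direction: you need a lower bound on the capacity. The mechanism the paper uses runs the other way: take a near-optimal admissible $w$ with $\|\nabla w\|_n^n$ close to $\capa(a,E,r)$, dominate $w$ by the Riesz potential $v(x)=\int u(y)|x-y|^{1-n}\,dm(y)$ with $u=|\nabla w|/\omega_{n-1}$ via the representation formula, and then invoke a Cartan-type covering lemma (Lemmas~\ref{lem:Landkof}--\ref{lem:Martio}) to show that the superlevel set $F=\{v>1\}\supset E\cap\overline{B}^n(a,r)$ has small $\Lambda_h$. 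It is that Cartan lemma, not a covering of $F$ by near-optimal balls, that imports the overlap constant $N_n^*$ into $M_1$. A secondary point: your patch for $r\in[d(E)/2,d(E))$ costs a factor $2^\beta$, so it yields $1/(2^{\beta+1}\cdot 3^n M_1(n,\beta))$ rather than the stated bound; since $M_1(n,\beta)$ is the fixed quantity from \eqref{eq:M1b}, the loss cannot simply be ``absorbed.'' (In fairness, Theorem~\ref{HDimBd} in the paper is stated only for $r<d(E)/2$, so this range discrepancy exists in the paper as well.)
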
 

There are many equivalent definitions of uniformly perfect sets. The definition given
in \cite{JV96} says that a set$E$ is $\alpha$-uniformly perfect, if the moduli of the
ring domains separating the set have the upper bound $\alpha.$ We show in Section 6 that
this definition is quantitatively equivalent with the definition of $\UP_n(c).$ Moreover,
we prove this equivalence with explicite constants, a fact which would enable one to
give explicit constants for instance in some earlier results such as in 
\cite[Thm 4.1]{JV96}.

Suppose now that $G\subset \R^n$ is a domain, its boundary $\partial G$ is of positive capacity, and define
$u_{\alpha}: G \to (0,\infty)$ by
\[u_{\alpha}(z) = \capa (G, \overline{B}^n(z, \alpha d(z,\partial G)))\,, \quad \alpha \in (0,1)\,,
\]
for $z \in G$ where $d(z,\partial G)$ is the distance from $z$ to $\partial G\,.$ 
We call $u_{\alpha}(z)$ \emph{the capacity test  function}  of $G$ at the point 
$z \in G\,.$
The numerical value of the capacity test  function depends clearly on $G, z,$ and $\alpha\,,$ 
but we omit $G$ from the notation because it is usually understood
from the context. Clearly, the capacity test  function is invariant under 
similarity transformations. We will also show that it is continuous as 
a function of both $z \in G$ and $\alpha \in (0,1)\,.$ For the purpose of
this paper, it is enough to choose e.g. $\alpha =1/2\,.$

Analysing the capacity test  function $u_{\alpha}(z)$ further, we show that, 
for a fixed $\alpha,$ it satisfies \emph{the Harnack inequality} as a 
function of $z$, a property which has a
number of consequences. First, for every $z_0 \in G,$ we see that 
$u_{\alpha}(z_0)>0\,,$
because the boundary was assumed to be of positive capacity.
Second, by fixing $z_0 \in G$, we see by a standard chaining argument 
\cite[p. 96 Lemma 6.23  and p. 84]{HKV20} that $u_{\alpha}(z)/u_{\alpha}(z_0)$ 
has a positive explicit minorant for a large class of domains, so called 
$\varphi$-uniform domains. 
This minorant, 
depending on $\varphi,\,\, d(z, \partial G)$ and Harnack parameters,  shows that $u_{\alpha}(z)$ cannot approach  $0$ arbirarily
fast when $z $ moves far away from $z_0$ or when $z \to \partial G\,.$ Under the stronger requirement that $\partial G$ be uniformly
perfect,  it follows that $u_{\alpha}(z)$ is bounded from below by a constant $c>0.$ These observations lead to the following new characterization of uniformly perfect sets. 

\begin{theorem} \label{newUP1} The boundary of a domain $G \subset \R^n$ 
is uniformly perfect if and only if there exists a constant $s>0$ 
such that $u_{\alpha}(x)\ge s$ for all $x \in G$.
\end{theorem}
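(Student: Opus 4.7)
I would prove the two implications separately.

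\emph{Sufficiency.} Assume $\partial G \in \UP_n(c)$. Fix $x \in G$, set $d := d(x, \partial G)$, pick $a \in \partial G$ with $|x - a| = d$, and put $K := \Bbn(x, \alpha d)$ and $F := \partial G \cap \Bbn(a, d)$. Since $F$ is a compact subset of $\partial G$ and therefore disjoint from $G$, the inclusion $G \subset \Rn \setminus F$, the monotonicity of the conformal capacity in its outer domain, and the identification of $n$-capacity with the modulus of the connecting curve family together give
\begin{equation*}
u_\alpha(x) \;=\; \capa(G, K) \;\ge\; \capa(\Rn \setminus F, K) \;=\; \M(\Gamma(K, F;\, \Rn)).
\end{equation*}
Theorem \ref{introCapBd} applied to $E = \partial G$ at the base point $a$ with radius $r = d$ (the case $d \ge d(\partial G)$ being easier since $\partial G$ is then bounded of controlled diameter relative to $x$) supplies $\capa(B^n(a, 2d), F) \ge (2 \cdot 3^n M_1(n, \beta))^{-1}$. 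I would then convert this into a positive lower bound on $\M(\Gamma(K, F;\, \Rn))$ by a Teichm\"uller--Gr\"otzsch-type comparison: both $K$ and $F$ are disjoint compacta contained in $\Bbn(a, 2d)$, $K$ is a Euclidean ball of explicit radius $\alpha d$, and $F$ has the capacity lower bound just established; M\"obius invariance of the modulus combined with a standard symmetrization argument then produces a uniform lower bound depending only on $n$, $\alpha$, and $c$.

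\emph{Necessity.} I argue the contrapositive. If $\partial G$ is not uniformly perfect, then for each $m$ there are $a_m \in \partial G$, $r_m \in (0, d(\partial G)/2)$, and $c_m \to 0$ such that the open spherical annulus $A_m := \{y : c_m r_m < |y - a_m| < r_m\}$ is disjoint from $\partial G$. A short topological argument forces $A_m \subset G$: being connected and disjoint from $\partial G$, $A_m$ lies in a single component of $\Rn \setminus \partial G$; if that component were not $G$, then the connected open $G$ would be confined to one of the two pieces of $\Rn \setminus A_m$, which leads either to $\partial G \subset \Bbn(a_m, c_m r_m)$ (hence $d(\partial G) \le 2 c_m r_m$, contradicting $r_m < d(\partial G)/2$ since $c_m < 1$), or to $a_m \notin \overline G$, a contradiction.

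Now pick $x_m \in A_m$ with $|x_m - a_m| = \sqrt{c_m}\, r_m$. Then $d_m := d(x_m, \partial G)$ satisfies $\sqrt{c_m}(1 - \sqrt{c_m})\, r_m \le d_m \le \sqrt{c_m}\, r_m$, and for $c_m$ small enough $K_m := \Bbn(x_m, \alpha d_m) \subset A_m$. Using $A_m \subset G$ and the monotonicity of capacity in the outer domain,
\begin{equation*}
u_\alpha(x_m) \;\le\; \capa(A_m, K_m).
\end{equation*}
Normalizing $A_m$ to $\{c_m < |z| < 1\}$ by a similarity and applying the Kelvin transform $z \mapsto z/|z|^2$, the condenser becomes $(\{1 < |w| < 1/c_m\},\, \Bbn(w_0, \rho_0))$, whose inner ball sits at log-distance of order $\tfrac{1}{2}\log(1/c_m)$ from each boundary sphere of the annulus and whose shape ratio $\rho_0/|w_0|$ is bounded in terms of $\alpha$. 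The radial admissible density $\rho(w) = C(\alpha)/(|w|\log(1/c_m))$ then gives $\capa(A_m, K_m) \le C_n(\alpha)\,(\log(1/c_m))^{1-n} \to 0$, contradicting the hypothesis $u_\alpha \ge s > 0$.

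\emph{Main obstacle.} The main technical difficulty is in the sufficiency direction: turning the explicit capacity bound of Theorem \ref{introCapBd} on $F$ into an explicit positive lower bound on the modulus $\M(\Gamma(K, F;\, \Rn))$, with constants depending only on $n$, $\alpha$, and $c$. This rests on a Teichm\"uller--Gr\"otzsch comparison inequality together with M\"obius invariance, and I would need to track carefully how the exponent $\beta = (\log 2)/\log(3/c)$ enters the final constant.
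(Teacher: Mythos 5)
Your overall structure mirrors the paper's: deduce the lower bound on $u_\alpha$ from uniform perfectness via a modulus comparison, and get the converse by producing a wide separating annulus and showing the capacity of a small ball near its logarithmic centre tends to zero. Your necessity direction is complete and correct; it is phrased as a contrapositive where the paper argues directly (the direct route yields an explicit $\UP_n(c)$ constant in terms of the lower bound $\gamma$, which is the point of Theorem \ref{introCapBd}-type quantitativeness, but the logic is the same), the topological argument that $A_m\subset G$ is right, and the Kelvin-transform estimate $\capa(A_m,K_m)\lesssim(\log(1/c_m))^{1-n}$ is equivalent to the paper's subring estimate via the two annuli $A_0$ and $A_1$ in the proof.

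For sufficiency, you have correctly located the main obstacle but not quite named the right tool. Turning the capacity bound on $F=\partial G\cap\bB^n(a,d)$ supplied by Theorem \ref{introCapBd} into a lower bound on $\M(\Delta(K,F;\R^n))$ cannot be done by ``a standard symmetrization argument'': spherical symmetrization (as in Lemma \ref{ringcap}) requires the competitor sets to be continua, while $F$ is merely a compact uniformly perfect set that may be totally disconnected. What actually closes the gap is the comparison principle for moduli, Lemma \ref{cgqm5.35.}, combined with the Teichm\"uller lower bound \eqref{teichlow} and the elementary logarithmic inequality in Proposition \ref{sugProp}; these are packaged in Lemma \ref{low_bd}, whose hypotheses are exactly ``two compacta at controlled mutual distance, each with a pointwise capacity lower bound''. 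In fact the paper's proof of sufficiency does not run this argument by hand: it simply applies Theorem \ref{thm_1.3} to the pair $(G,E)$ with $E=\bB^n(z,\alpha d(z,\partial G))$ a ball (which is automatically in every class $\UP_n(c)$), with $d(E)/d(E,\partial G)=2\alpha/(1-\alpha)$, and Theorem \ref{thm_1.3} is where all of Lemma \ref{low_bd}, Lemma \ref{TwoSets}, Lemma \ref{lem_moduliforE} and the case analysis of Theorem \ref{ideaToshi} enter. So your sketch is pointing in the right direction, but the ``main obstacle'' is in fact the bulk of the machinery in Sections~4--7, and you should replace the appeal to symmetrization by an appeal to the comparison principle (or directly cite Theorem \ref{thm_1.3}). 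A further small issue: Theorem \ref{introCapBd} requires $r<d(\partial G)$, and your parenthetical dismissal of the case $d\ge d(\partial G)$ needs more care when $\partial G$ is unbounded (in the chordal sense), though that case cannot arise when $d$ and $d(\partial G)$ are both finite and $d\ge d(\partial G)$ forces $\partial G$ to sit inside $\bB^n(a,d)$.
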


Many characterizations are known for plane domains with uniformly perfect 
boundaries and often these characterizations are given in terms of 
hyperbolic geometry \cite[pp. 342-344]{GM}, \cite{KL}, \cite{BM}. Because
the hyperbolic geometry cannot be used in dimensions $n\ge 3,$
we use here another tool, the {\it Whitney decomposition} of a domain 
$G\subset {\R}^n,$ which has numerous applications to geometric function
theory and harmonic analysis \cite{B,GM,ST}. The Whitney decomposition 
represents $G$ as a countable union of non-overlapping cubes with 
edge lengths equal
to $2^{-k}, k \in \mathbb{Z},$ where the edge length is proportial to the distance from a
cube to the boundary of the domain  \cite{ST}.  Martio and Vuorinen \cite{MV} applied this 
decomposition to establish upper bounds for the metric size of the boundary $\partial G$ in
terms of $N_k,$ the number of cubes with edge length equal to $2^{-k}\,.$  Their method was based on imposing growth bounds for $N_k$ when $k \to \infty$ and depending
 on the growth rate, the conclusion was either an upper bound for
the Minkowski dimension of the boundary or
a sufficient condition for the boundary to be of capacity zero. In our next
main result, Theorem \ref{thmWUP}, we use Whitney decomposition "in the opposite direction".
Indeed, we employ Whitney cubes as test sets for the capacity structure of the boundary and 
obtain the following characterization of
uniform perfectness in all dimensions $n\ge 2$. Whitney cubes also have applications to
the study of surface area estimation of the level sets of the distance function \cite{KLV}.

\begin{theorem} \label{thmWUP}
The boundary of a domain $G\subset\R^n$ is uniformly perfect 
if and only if there exists a constant $s>0$ such that, for every 
Whitney cube $Q\subset G$, 
\begin{align*}
\capa(\R^n\setminus Q, \partial G)\geq s.
\end{align*}
\end{theorem}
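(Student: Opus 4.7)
The plan is to reduce both implications to Theorem \ref{newUP1}, after first identifying the capacity $\capa(\R^n\setminus Q,\partial G)$ with the more intrinsic quantity $\capa(G,Q)$. Writing the capacities as moduli of connecting curve families gives $\capa(\R^n\setminus Q,\partial G)=\M(\Delta(\partial G,\partial Q;\R^n\setminus Q))$ and $\capa(G,Q)=\M(\Delta(\partial G,\partial Q;G\setminus Q))$. Since $G\setminus Q\subset\R^n\setminus Q$, one inequality is immediate; for the reverse, given a curve $\gamma$ from $\partial G$ to $\partial Q$ in $\R^n\setminus Q$, the plan is to pass to the subcurve starting at $\gamma$'s last crossing of $\partial G$, which by continuity and the connectedness of $G$ stays in $\overline G\setminus Q$ before ending at $\partial Q\subset G$. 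This minorization yields the matching modulus inequality.

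For the forward direction, assume $\partial G$ is uniformly perfect and let $Q\subset G$ be a Whitney cube with centre $x_Q$ and side length $\ell(Q)$. The Whitney bounds $\sqrt n\,\ell(Q)\le d(x_Q,\partial G)\le 5\sqrt n\,\ell(Q)$ make it possible to choose $\alpha$ small enough, for instance $\alpha=1/(10\sqrt n)$, so that $\overline B^n(x_Q,\alpha d(x_Q,\partial G))$ is inscribed in $Q$. Monotonicity of capacity in the compact argument then yields
\[
\capa(\R^n\setminus Q,\partial G)=\capa(G,Q)\ge u_\alpha(x_Q),
\]
and Theorem \ref{newUP1} provides a uniform positive lower bound on $u_\alpha$.

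For the converse, assume $\capa(\R^n\setminus Q,\partial G)\ge s$ for every Whitney cube. Given $x\in G$, I would pick a Whitney cube $Q\ni x$ with centre $x_Q$; this time the ball $\overline B^n(x_Q,d(x_Q,\partial G)/2)$ has radius at least $\sqrt n\,\ell(Q)/2$, the circumradius of $Q$, so $Q$ sits inside the ball and monotonicity in the opposite direction gives $u_{1/2}(x_Q)\ge\capa(G,Q)\ge s$. Propagating this lower bound from $x_Q$ to $x$ via the Harnack inequality for $u_{1/2}$ (discussed before Theorem \ref{newUP1}), together with the Whitney-controlled comparison between $|x-x_Q|$ and $d(x_Q,\partial G)$, delivers $u_{1/2}(x)\ge s'$ uniformly on $G$. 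Theorem \ref{newUP1} then gives uniform perfectness of $\partial G$.

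The main obstacle will be the identity $\capa(\R^n\setminus Q,\partial G)=\capa(G,Q)$: the last-crossing truncation argument is geometrically clear, but some care is required for curves wandering into $\R^n\setminus\overline G$ before returning to $\partial Q\subset G$, and one must know that the contribution of curves that never enter $G$ is negligible. Once this identification is in place, the rest is bookkeeping with Whitney constants and a routine Harnack chaining.
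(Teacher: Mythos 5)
Your proposal is correct and follows essentially the paper's own route: comparing $\capa(\R^n\setminus Q,\partial G)$ with the capacity test function $u_\alpha$ at the cube's centre via inscribed and circumscribed balls together with the Whitney distance bounds, propagating across the cube by the Harnack property of $u_\alpha$, and closing both implications with Theorem \ref{newUP1}. The paper (Lemmas \ref{lemma_A_wc}--\ref{lemma_B_wc}) packages the first step as a two-sided estimate $u_\gamma(m)\le\capa(\R^n\setminus Q,\partial G)\le d_1\,u_\gamma(m)$ at the midpoint $m$ rather than through your exact identity $\capa(\R^n\setminus Q,\partial G)=\capa(G,Q)$, but the underlying minorization and monotonicity content is the same, so the truncation subtlety you flag is already present, and handled implicitly, in the paper's argument.
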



By definition, see the property \ref{Wdecmp}(3) below, every Whitney cube 
$Q \subset G$ satisfies $d(Q)/d(Q, \partial G)> 1/4.$ Thus we see
that the next theorem generalizes Theorem \ref{thmWUP}.

\begin{theorem}\label{thm_1.3}
Let $G\subset\R^n$ be a domain and $E\subset G$ a compact set.
If $E$ and $\partial G$ are uniformly perfect, then
\begin{align*}
\capa(G,E)\geq s \log(1+d(E)\slash d(E,\partial G)),    
\end{align*}
where  $s>0$ is a constant depending only on the dimension $n$ and the uniform perfectness parameters of $E$ and $\partial G$. 
\end{theorem}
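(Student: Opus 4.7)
The plan is to show that $\capa(G,E)$ admits a lower bound obtained from $O(\log(d(E)/d(E,\partial G)))$ disjoint local contributions, each bounded below by a universal constant via Theorem \ref{introCapBd}. Write $\delta=d(E,\partial G)$ and $D=d(E)$. If $\delta\ge D$, then $\log(1+D/\delta)\le\log 2$ and it suffices to prove $\capa(G,E)\ge s_0$ for a universal $s_0>0$; this follows from a single application of Theorem \ref{introCapBd} to $E$ and $\partial G$ at scale $\sim\min(\delta,D)$ near a pair of closest points $(x_0,y_0)\in E\times\partial G$, combined with the observation that two plates of comparable positive $\beta$-Hausdorff content (Theorem \ref{introHDimBd}) sitting in a bounded region must be joined by a curve family of definite modulus.

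For the main regime $\delta<D$, fix $x_0\in E,\ y_0\in\partial G$ with $|x_0-y_0|=\delta$ and set $K=\lfloor\log_2(D/\delta)\rfloor$. Using uniform perfectness of $\partial G$ and $E$ (with parameters $c_\partial$ and $c_E$), pick points $y_k\in\partial G$ and $x_k\in E$ with $|y_k-y_0|\asymp\rho_k:=2^k\delta$ and $|x_k-x_0|\asymp\rho_k$, for $k=0,1,\dots,K$. After thinning the indices by an integer spacing $m=m(c_E,c_\partial)$, the fat spherical shells
$S_k:=\{x:\rho_k-2\delta<|x-y_0|<\rho_{k+m}+2\delta\}$
centered at $y_0$ become pairwise disjoint; each $S_k$ contains a piece of $\partial G$ and a piece of $E$, each of positive $\beta$-Hausdorff content at scale $\rho_k$ by Theorem \ref{introHDimBd}. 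Within $S_k$ I would extract a curve subfamily $\Gamma_k\subset\Delta(E,\partial G;G)$ supported in $S_k$ with $\mM(\Gamma_k)\ge c_0$, where $c_0>0$ depends only on $n$, $c_E$, $c_\partial$.

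Since the families $(\Gamma_k)_k$ have pairwise disjoint supports, additivity of the modulus on disjoint open sets gives
\begin{equation*}
\capa(G,E)=\mM\bigl(\Delta(E,\partial G;G)\bigr)\ge\sum_k\mM(\Gamma_k)\ge c_0\cdot\lfloor K/m\rfloor\ge s\,\log\!\bigl(1+D/\delta\bigr),
\end{equation*}
for a suitable $s=s(n,c_E,c_\partial)>0$. The technical heart of the argument is the single-scale estimate $\mM(\Gamma_k)\ge c_0$: one must upgrade the assertion ``$E\cap S_k$ and $\partial G\cap S_k$ have comparable positive $\beta$-content in a shell of bounded shape-ratio'' to ``$E\cap S_k$ and $\partial G\cap S_k$ are connected within $G\cap S_k$ by a curve family of definite modulus.'' This is a quantitative Loewner-type property for uniformly perfect plates, which I expect to derive by applying Theorem \ref{introCapBd} symmetrically to both plates at scale $\rho_k$ and then combining the two local estimates via the capacity test function framework developed earlier in the paper; verifying the geometric compatibility (location of each plate within its shell, and the existence of connecting curves inside $G$) is where the uniform perfectness of \emph{both} sets is essential.
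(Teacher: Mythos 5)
Your main-case architecture --- a chain of pairwise disjoint shells near a closest point pair $(x_0,y_0)$, each carrying a piece of $E$ and a piece of $\partial G$ of definite relative size, with total modulus accumulated by additivity over separate curve families --- is essentially the paper's Case B (Lemma \ref{lem_moduliforE}, built on Proposition \ref{prop_rhoE4bound} and Lemma \ref{TwoSets}). The ``technical heart'' you flag, upgrading comparable Hausdorff content inside a shell to a joining curve family of definite modulus, is exactly what the paper's Lemma \ref{TwoSets} provides via the superannulus Proposition \ref{ringProp} and the comparison principle Lemma \ref{cgqm5.35.}; so you have correctly located the missing ingredient, though you do not supply it.

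There is, however, a genuine error in your treatment of the regime $\delta\ge D$. You claim it suffices to show $\capa(G,E)\ge s_0$ for a universal $s_0>0$, and you assert that two sets of comparable positive $\beta$-Hausdorff content inside a bounded region must be joined by a curve family of definite modulus. Both claims are false in this regime: taking $E=\overline{B}^n(0,\ep)\subset G=\B^n$, both $E$ and $\partial G$ are (trivially) uniformly perfect, yet $\capa(G,E)=\omega_{n-1}(\log(1/\ep))^{1-n}\to 0$ as $\ep\to 0$, while $D/\delta\to 0$. When the plate scale $\min(\delta,D)=D$ is small compared to the separation $\delta$, the joining modulus decays, and the proof must produce a lower bound that degrades like $\log(1+D/\delta)$; the paper's Lemma \ref{low_bd} achieves this by coupling the comparison principle with the elementary estimate of Proposition \ref{sugProp} that converts the factor $(\log(2/t))^{1-n}$ into $\log(1+t)$. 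A related gap appears in the transition zone $1<D/\delta<2^m$, where your $K<m$ and the shell construction yields no shells at all; the paper handles this intermediate range as a separate Case C using Lemma \ref{cgqm5.35.}(2), Lemma \ref{marsarbd}, and again Proposition \ref{sugProp}. Both of these missing pieces are needed for a complete proof.
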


This theorem is well-known if both $E$ and $\partial G$ are continua, see \cite[Lemma 7.38, Notes 7.60]{cgqm}.

\noindent
\textbf{Acknowledgements.}
The authors are grateful to Prof. Akihiro Munemasa for
information about the kissing numbers of combinatorial geometry.
We are indebted to Prof. Don Marshall for a permission to use
his software for plotting Whitney cubes.
The second author would like to express his thanks to the
Department of Mathematics and Statistics, University of Turku,
for its hospitality and support during the visit to Turku, Finland in 2022.

\section{Preliminary results}\label{sec_metrics}

In this preliminary section we recall some basic facts about  metrics
and quasiconformal homeomorphisms. Moreover, we prove a few propositions
which are results of technical character, essential for the proofs
of the main theorems in subsequent sections.

The following notations will be used: The Euclidean diameter of the non-empty set 
$J$ is $d(J)=\sup\{|x-y|\text{ }|\text{ }x,y\in J\}$. The Euclidean distance 
between two non-empty sets $J,K\subset\R^n$ is 
$d(J,K)=\inf\{|x-y|\text{ }|\text{ }x\in J,y\in K\}$ and the distance from 
a point $x$ to the set $J$ is $d(x,J)=d(\{x\},J)$. Thus, for all points $x$ in 
a domain $G\subsetneq\R^n$, the Euclidean distance from $x$ to the boundary 
$\partial G$ is denoted by $d(x,\partial G),$ the Euclidean open ball with 
a center $x$ and a radius $r$ by $B^n(x,r)=\{y\in\R^n\text{}:\text{}|x-y|<r\}$, 
the corresponding closed ball by $\overline{B}^n(x,r)$ and their boundary by 
$S^{n-1}(x,r)=\partial B^n(x,r)$. If the center $x$ or the radius $r$ are not 
otherwise specified, assume that $x=0$ and $r=1$.  The unit ball is denoted by 
$\B^n\,.$ We denote by $\Omega_n$ the volume of the unit ball $\B^n$ and by 
$\omega_{n-1}$ the area of the unit sphere $S^{n-1}.$
As well known, $\omega_{n-1}=n\Omega_n=2\pi^{n/2}/\Gamma(n/2).$

  For $x \in \R^n, 0<a<b,$ we use the following notation for an 
  annulus centered at $x$ 
  \begin{equation}\label{myRing}
  R(x,b,a)=  \overline{B}^n(x,b)\setminus B^n(x,a).
  \end{equation}
  
  The first proposition shows that an annulus and its translation are
  both subsets of a larger annulus. This larger annulus is a superannulus
  for both of the two smaller annuli, i.e. the smaller annuli
  are separating the two boundary components of the larger annulus (Fig. 1).
  
   \medskip
 \begin{figure}[H]
    \centering
    \includegraphics[width=9cm]{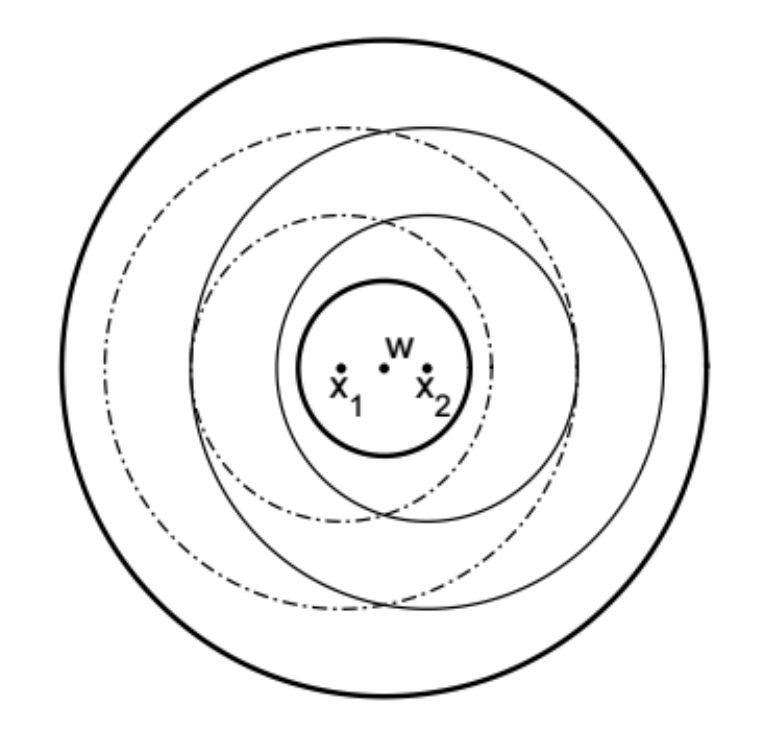}
    \caption{An annulus centered at $x_1$ and its translation  centered
    at $x_2$ (marked with  solid
    and dash-dot markers, resp.) are both subsets of a larger annulus (marked with thick marker) 
    centered at the midpoint $w=(x_1+x_2)/2.$ The larger annulus is a common superannulus
    of the two smaller annuli. }\label{Fig1}
  \end{figure}

  \begin{prop}\label{ringProp} Let $ 0<a<b$ and $\tau \ge 2.$
 If $x_1,x_2 \in \R^n, |x_1-x_2|<a,$  and $w=(x_1+x_2)/2$, then for $j=1,2$
 $$
 R(x_j,b,a)   \subset R(w,d, c), \quad \frac{d}{c} \le 
 \frac{b}{a} \frac{1+ |x_1-x_2|/b }{1-|x_1-x_2|/a }\,,
  $$
where $c= a- |x_1-x_2|, d= b+ |x_1-x_2|\,. $  In particular, if
 $|x_1-x_2|< a/\tau^2$, then for $j=1,2$
  $$
  R(x_j,\tau b, a/\tau) \subset R(w,\tau^2 b, a/\tau^2) .
  $$
  Moreover,  $R(w,\tau^2 b, a/\tau^2) \subset B^n(x_j,2\tau^2 b),$ for $j=1,2.$
   \end{prop}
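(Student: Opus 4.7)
The plan is to carry out the three inclusions sequentially, each being a routine triangle-inequality computation; there is no real obstacle, only bookkeeping of constants. I would structure the argument as follows.

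First, for the general inclusion $R(x_j,b,a)\subset R(w,d,c)$ with $c=a-|x_1-x_2|$ and $d=b+|x_1-x_2|$, I would let $y\in R(x_j,b,a)$, so $a\le |y-x_j|\le b$. Since $w$ is the midpoint of $x_1$ and $x_2$, one has $|w-x_j|=|x_1-x_2|/2$. The triangle inequality then gives
\[
a-\tfrac{|x_1-x_2|}{2}\le |y-x_j|-|w-x_j|\le |y-w|\le |y-x_j|+|w-x_j|\le b+\tfrac{|x_1-x_2|}{2}.
\]
Since $|x_1-x_2|<a$, the lower bound is positive, and the claimed inclusion follows a fortiori from the weaker bounds $c=a-|x_1-x_2|$ and $d=b+|x_1-x_2|$. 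For the ratio estimate, I would factor:
\[
\frac{d}{c}=\frac{b+|x_1-x_2|}{a-|x_1-x_2|}=\frac{b}{a}\cdot\frac{1+|x_1-x_2|/b}{1-|x_1-x_2|/a},
\]
which is the stated bound (in fact an equality).

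Next, for the special case $|x_1-x_2|<a/\tau^2$ with $\tau\ge 2$, I would apply the first part with $a$ and $b$ replaced by $a/\tau$ and $\tau b$. This yields $R(x_j,\tau b,a/\tau)\subset R(w,d',c')$, where
\[
c'=\tfrac{a}{\tau}-|x_1-x_2|>\tfrac{a}{\tau}-\tfrac{a}{\tau^2}=\tfrac{a(\tau-1)}{\tau^2}\ge \tfrac{a}{\tau^2},
\]
using $\tau\ge 2$, and
\[
d'=\tau b+|x_1-x_2|<\tau b+\tfrac{a}{\tau^2}<\tau b+\tfrac{b}{\tau^2}\le \tau^2 b,
\]
where the last inequality holds because $\tau b+b/\tau^2\le \tau^2 b$ is equivalent to $1/\tau^2\le \tau^2-\tau$, which is true for $\tau\ge 2$. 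Together, $R(x_j,\tau b,a/\tau)\subset R(w,\tau^2 b,a/\tau^2)$.

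Finally, for the last inclusion $R(w,\tau^2 b,a/\tau^2)\subset B^n(x_j,2\tau^2 b)$, I would take $y\in R(w,\tau^2 b,a/\tau^2)$ and estimate
\[
|y-x_j|\le |y-w|+|w-x_j|\le \tau^2 b+\tfrac{|x_1-x_2|}{2}<\tau^2 b+\tfrac{a}{2\tau^2}<\tau^2 b+\tfrac{b}{2}\le 2\tau^2 b,
\]
again using $a<b$ and $\tau\ge 2$ in the last step. This completes the proof. The whole argument is elementary; the only subtlety is being careful that each successive inclusion consumes enough slack (factors of $\tau$ rather than $\tau^2$ would fail at the boundary case $\tau=2$).
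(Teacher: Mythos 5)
Your proof is correct and uses essentially the same approach as the paper: direct triangle-inequality estimates with $|w-x_j|=|x_1-x_2|/2$. You additionally spell out the first inclusion (correctly noting the ratio bound is in fact an equality) and the final inclusion, both of which the paper dismisses as following from the triangle inequality, and your route to the second inclusion via the first part with $a\mapsto a/\tau$, $b\mapsto\tau b$ is a minor reorganization of the same computation.
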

   
   \begin{proof} The claims follow from the triangle inequality
and we prove here only the second one. Without loss of generality we may assume that $j=1.$ Fix
   $u \in   R(x_1,\tau b, a/\tau).$ Then $\tau b > |u-x_1|> a/\tau$ and
   $$
   |u-w|\ge | u-x_1+ x_1-w|\ge | u-x_1|- |x_1-w|\ge a/\tau- a/\tau^2 \ge a/\tau^2
   $$
   where the last inequality holds because $\tau \ge 2.$ Similarly,
   $$
   |u-w|\le | u-x_1+ x_1-w|\le | u-x_1|+ |x_1-w|\le \tau b+a/\tau^2 \le \tau^2 b
   $$
   because $\tau \ge 2.$ Therefore 
   $a/\tau^2 \le |u-w|\le \tau^2 b$ and hence $u\in R(w,\tau^2 b, a/\tau^2).$
   The last assertion also follows easily from the triangle inequality.
   \end{proof}

Topology in $\R^n$ or in its one point compactification 
$\overline{\R}^n = \R^n \cup \{\infty\}$ is the metric topology defined 
by the chordal metric.
The \emph{chordal (spherical) metric} is the function 
$q:\overline{\R}^n\times\overline{\R}^n\to[0,1],$
\begin{align*}
q(x,y)&=\frac{|x-y|}{\sqrt{1+|x|^2}\sqrt{1+|y|^2}},\text{ if }x\neq\infty\neq y,\quad
q(x,\infty)=\frac{1}{\sqrt{1+|x|^2}}. 
\end{align*}
Thus, for instance,  an unbounded domain $G\subset \R^n$ has $\infty$ as one of
its boundary points.

For the  sake of convenient reference we record the following simple inequality.

\begin{proposition} \label{quadProp}
For $a>0$ and $ p\ge 1$ the inequality 
\[ t-a\ge \lambda^p \sqrt{1+t}, \quad \lambda > 1,\]
holds for all $t \ge  \lambda^{2p} +  \lambda^p  \sqrt{1+a} +a.  $ 
\end{proposition}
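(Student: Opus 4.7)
The plan is to reduce the statement to a single endpoint check and then extend by monotonicity. Writing $s=\lambda^p$ (so that $s>1$ since $\lambda>1$ and $p\ge 1$) and $t_0 = s^2 + s\sqrt{1+a} + a$, the claim becomes: $t-a\ge s\sqrt{1+t}$ for every $t\ge t_0$. I would first verify this at $t=t_0$ and then argue that the left minus right side is nondecreasing in $t$ on $[t_0,\infty)$.

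For the endpoint, the computation I expect to carry out is direct. On the one hand,
\[
t_0 - a \;=\; s^2 + s\sqrt{1+a} \;=\; s\bigl(s+\sqrt{1+a}\,\bigr).
\]
On the other hand,
\[
1+t_0 \;=\; s^2 + s\sqrt{1+a} + (1+a) \;\le\; s^2 + 2s\sqrt{1+a} + (1+a) \;=\; \bigl(s+\sqrt{1+a}\,\bigr)^{2},
\]
where the single inequality used is simply $s\sqrt{1+a}\ge 0$. Taking square roots and multiplying by $s$ gives $s\sqrt{1+t_0}\le s(s+\sqrt{1+a})=t_0-a$, which is the endpoint inequality.

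To extend to all $t\ge t_0$, I would consider $h(t)=t-a-s\sqrt{1+t}$ and compute $h'(t)=1-s/(2\sqrt{1+t})$, which is nonnegative precisely when $\sqrt{1+t}\ge s/2$. Since $t\ge t_0\ge s^2$, this holds, so $h$ is nondecreasing on $[t_0,\infty)$ and $h(t)\ge h(t_0)\ge 0$.

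I do not anticipate any real obstacle: the entire content of the proposition amounts to the trivial observation $s\sqrt{1+a}\ge 0$ used in the endpoint estimate, together with a one-line derivative check for monotonicity. The only mild care required is to keep track that $t_0\ge a$ (ensuring $t-a$ is a legitimate quantity to square or compare with a square root) and that the threshold $t_0$ is large enough to place us in the region where $h'\ge 0$; both are immediate from $s>1$.
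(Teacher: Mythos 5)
Your proof is correct. It differs from the paper's argument in structure: the paper squares the inequality (valid since $t\ge a$) to obtain the quadratic $Q(t)=(t-a)^2-\lambda^{2p}(1+t)\ge 0$, and then invokes the quadratic formula to show that the larger root of $Q$ does not exceed $\lambda^{2p}+\lambda^p\sqrt{1+a}+a$, so that $Q(t)\ge 0$ for $t$ past that threshold. You instead verify the unsquared inequality directly at the threshold $t_0=s^2+s\sqrt{1+a}+a$ (using $1+t_0\le(s+\sqrt{1+a})^2$) and then propagate it to all $t\ge t_0$ via a one-line derivative computation showing $h(t)=t-a-s\sqrt{1+t}$ is nondecreasing there. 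Both hinge on the same trivial estimate (dropping the nonnegative cross term $s\sqrt{1+a}$), but your route avoids the quadratic formula entirely and makes the verification fully explicit, whereas the paper's version is terser but leaves the root-bound computation to the reader. Either is perfectly acceptable; yours is the more self-contained write-up.
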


\begin{proof} The inequality  clearly holds for all large values of $t,$ 
larger than $|x|,$
where $x$ is a root of
$(x-a)^2 -  \lambda^{2p} (x +1)=0. $
By the quadratic formula we see that both roots have absolute value
less than $ \lambda^{2p} +  \lambda^p  \sqrt{1+a} +a.$
\end{proof}

For two points in a domain, the next result gives an estimate for the number
of annular domains separating these points. This result has an important role
in the proof of one of our main results in Section 7.

\begin{proposition}\label{prop_rhoE4bound}
 Let $G\subset\R^n$ be a domain, let $E$ be  a compact subset of $G\,,$ 
and choose $x,y\in E$ such that
 $d(x,\partial G)= d(E,\partial G)$ and $|x-y|\ge d(E)/2$.
  Fix $z_0\in\partial G$ so that $|x-z_0|=d(x,\partial G)$ and 
let $w= (x+z_0)/2\,, \lambda>1$. If $d(E)/d(E,\partial G)>\lambda^p+2$ for some
integer $p \ge 1,$
 then there are $p$ disjoint annular domains
\begin{align*}
B^n(w,\lambda^md(x,\partial G)/2)\setminus 
\overline{B}^n(w,\lambda^{m-1}d(x,\partial G)/2),\quad m=1,...,p    
\end{align*}
separating $x$ and $y$. 
Moreover, for 
$d(E)/d(E,\partial G)>\max \{\lambda^p+2, \lambda^2+ 2\lambda+2\},$ we have
$$p > c \log(1+ d(E)/d(E,\partial G)), \quad c= 1/(2 \log \lambda).$$
\end{proposition}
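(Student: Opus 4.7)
The plan is to split the proposition into its geometric content (that the $p$ annuli separate $x$ from $y$) and its arithmetic content (the lower bound on $p$), handled respectively by triangle inequalities and by Proposition~\ref{quadProp}. Throughout I would abbreviate $\delta = d(x,\partial G) = d(E,\partial G)$ and $s = d(E)/\delta$, and observe that $|x - w| = |x - z_0|/2 = \delta/2$ since $w$ is the midpoint of $[x, z_0]$.

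For the first assertion, the open annuli
\[A_m := B^n(w, \lambda^m \delta/2) \setminus \overline{B}^n(w, \lambda^{m-1} \delta/2), \quad m = 1, \ldots, p,\]
are pairwise disjoint because $\lambda > 1$. To show that each $A_m$ separates $x$ from $y$, I would verify that $x$ and $y$ lie in the two distinct components of $\R^n \setminus A_m$: since $|x - w| = \delta/2 \le \lambda^{m-1}\delta/2$, the point $x$ belongs to the inner closed ball, and the triangle inequality
\[|y - w| \ge |y - x| - |x - w| \ge \tfrac{1}{2} d(E) - \tfrac{1}{2} \delta = \tfrac{1}{2}(s - 1)\delta,\]
combined with the hypothesis $s > \lambda^p + 2$, gives $|y - w| > (\lambda^p + 1)\delta/2 > \lambda^m \delta/2$ for all $m \le p$, placing $y$ in the outer component.

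For the moreover part, I would take $p$ to be the \emph{largest} positive integer satisfying $\lambda^p + 2 < s$; such a $p \ge 1$ exists because the strengthened hypothesis guarantees $s > \lambda^2 + 2\lambda + 2 > \lambda + 2$. Maximality then gives $\lambda^{p+1} \ge s - 2$, hence $p \ge \log(s-2)/\log\lambda - 1$. I would next apply Proposition~\ref{quadProp} with $a = 2$ and exponent $1$: because $\sqrt{3} < 2$, the hypothesis $s > \lambda^2 + 2\lambda + 2$ strictly exceeds the threshold $\lambda^2 + \lambda\sqrt{3} + 2$ appearing there, and inspection of the proof of Proposition~\ref{quadProp} (the quadratic roots lie strictly below the stated bound) shows that the conclusion sharpens to $s - 2 > \lambda\sqrt{1 + s}$. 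Squaring, taking logs, and dividing by $2\log\lambda$ yields
\[\frac{\log(s-2)}{\log\lambda} - 1 > \frac{\log(1 + s)}{2\log\lambda} = c\log(1 + s),\]
which when combined with the earlier maximality estimate for $p$ forces $p > c\log(1 + s)$.

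The geometric construction is just triangle inequalities and poses no difficulty. The delicate step is the moreover part, where one must confirm that the constant $\lambda^2 + 2\lambda + 2$ is calibrated precisely to beat the threshold $\lambda^2 + \lambda\sqrt{3} + 2$ from Proposition~\ref{quadProp} by a strict margin; this strict margin is what turns that proposition's conclusion into a strict inequality, so that the $-1$ loss incurred by discretizing $p$ via the maximality step can be absorbed. I do not foresee any other obstacles.
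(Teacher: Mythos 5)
Your proof is correct and follows essentially the same path as the paper's: the triangle inequality argument for the separating annuli is identical, and the ``moreover'' part proceeds via the same discretization $\lambda^{p+1}+2\ge s>\lambda^p+2$ followed by the same reduction to the quadratic inequality $(s-2)^2>\lambda^2(1+s)$ settled by Proposition~\ref{quadProp} with $a=2$. Your explicit attention to the strict-inequality margin (that $\lambda^2+2\lambda+2$ strictly exceeds the threshold $\lambda^2+\lambda\sqrt{3}+2$ of Proposition~\ref{quadProp}, and that the roots of the relevant quadratic lie strictly below that threshold) is a small refinement that the paper leaves implicit but is not a different approach.
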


\begin{proof} Because $\overline{B}^n(w,\lambda^p d(x,\partial G)/2)$ contains the
largest annulus, it is enough to show that $|w-y|>\lambda^p d(x,\partial G)/2$.
 By the triangle inequality,
\begin{align*}
|y-w|\geq|x-y|-|x-w|
\quad\Leftrightarrow\quad
\frac{|y-w|}{|x-w|}\geq\frac{|x-y|}{|x-w|}-1\,.
\end{align*}
Further,
\begin{align*}
|y-w|\geq\left(\frac{|x-y|}{|x-w|}-1\right)|x-w|
\ge (\frac{d(E)}{d(E,\partial G)}-1)d(x,\partial G)/2\ge
 (\lambda^p +1)d(x,\partial G)/2 \,
\end{align*}
as desired. To prove the second claim, the lower bound for $p,$ 
fix an integer $p\ge 1$ with
$$ \lambda^{p +1}+2  \ge u > \lambda^{p }+2, \quad u=  d(E)/d(E,\partial G).$$
Then 
\[
 p+1 \ge  \frac{ \log(u-2)}{ \log  \lambda}
\]
holds. To find a lower bound for $p,$ we observe that the inequality
\begin{equation} \label{pBound}
 p+1 \ge  \frac{ \log(u-2)}{ \log  \lambda}> 
 \frac{  \log(1+u)}{2 \log  \lambda}+1 
\end{equation}
holds iff
$$
\left( \frac{u-2}{ \lambda}\right)^2>1+u  
\Leftrightarrow u^2 -(4+ \lambda^2)u + 4 - \lambda^2>0.
$$ 
By Proposition  \ref{quadProp}, this holds
for all $u>  \lambda^2 +  2\lambda+2.$ 
By \eqref{pBound},  we see that this yields also the desired
lower bound for $p.$
\end{proof}

\medskip

\begin{nonsec}{\bf Quasiconformal maps and moduli of curve families.} Quasiconformal homeomorphisms
$f: G \to G'=f(G)$ between domains $G,G' \subset \R^n$ are commonly defined
in terms of moduli of curve families.
For the basic properties of the modulus $\M(\Gamma)$ of a curve family $\Gamma$, 
the reader is referred to \cite{Ah,H,GMP}, 
\cite[6.1, p. 16]{V71}, \cite{HKV20}.
According to V\"ais\"al\"a's book \cite{V71}, $K$-quasiconformal maps are
characterized by the inequality
\[
  \M(f \Gamma)/K \le \M(\Gamma) \le  K \M(f \Gamma)
\]
for every family of curves $\Gamma$ in $G$ where $f(\Gamma)= \{ f(\gamma): \gamma \in\Gamma\}\,.$

\end{nonsec}

The following monotonicity of the moduli of curve families is quite useful 
in various estimates. Let  $\Gamma_1,\Gamma_2$ be two curve families in $\Sn$.
We say that $\Gamma_2$ is minorized by $\Gamma_1$ and write
$\Gamma_2>\Gamma_1$ for it, if
every curve $\gamma\in\Gamma_2$ has a subcurve belonging to $\Gamma_1$.
For instance, $\Gamma_2>\Gamma_1$ if $\Gamma_2\subset \Gamma_1.$

\begin{lemma}[$\text{\cite[\S 6]{V71}}$]\label{cgqm_5.3}
\begin{enumerate}
\item
If $\Gamma_2>\Gamma_1,$ then $\M(\Gamma_2)\le\M(\Gamma_1)$.
\item
For a collection of curve families $\Gamma_i~(i=1,2,\dots, N),$
$$
\mM\left(\bigcup_j \Gamma_i\right)
\le \sum_{i=1}^N \M(\Gamma_i).
$$
Moreover, equality holds if the curve families are separate.
\end{enumerate}
\end{lemma}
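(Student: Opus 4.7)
The plan is to work directly from the definition: $\M(\Gamma)=\inf\int_{\R^n}\rho^n\,dm$, where the infimum runs over Borel functions $\rho\ge 0$ that are admissible for $\Gamma$, meaning $\int_\gamma \rho\,ds\ge 1$ for every locally rectifiable $\gamma\in\Gamma$. The two statements will both be reduced to comparing the corresponding sets of admissible densities.

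For part (1), I would argue that every admissible density for $\Gamma_1$ is also admissible for $\Gamma_2$. Take any $\rho$ admissible for $\Gamma_1$ and any $\gamma\in\Gamma_2$. By the hypothesis $\Gamma_2>\Gamma_1$, there is a subcurve $\gamma'$ of $\gamma$ belonging to $\Gamma_1$. Since $\rho\ge 0$, monotonicity of the line integral yields $\int_\gamma\rho\,ds\ge\int_{\gamma'}\rho\,ds\ge 1$, so $\rho$ is admissible for $\Gamma_2$. Hence the admissible class for $\Gamma_2$ contains that for $\Gamma_1$, and taking infima gives $\M(\Gamma_2)\le\M(\Gamma_1)$.

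For the inequality in part (2), the key trick is the $\ell^n$ combination of admissible densities. Given $\rho_i$ admissible for $\Gamma_i$, set $\rho=\bigl(\sum_{i=1}^N\rho_i^n\bigr)^{1/n}$. Since $\rho\ge\rho_{i_0}$ pointwise whenever $\gamma\in\Gamma_{i_0}$, the density $\rho$ is admissible for $\bigcup_i\Gamma_i$, and $\int\rho^n\,dm=\sum_i\int\rho_i^n\,dm$ by construction. Taking the infimum over each $\rho_i$ separately yields $\M\bigl(\bigcup_i\Gamma_i\bigr)\le\sum_{i=1}^N\M(\Gamma_i)$.

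For the equality clause, recall that $\{\Gamma_i\}$ being separate means there exist pairwise disjoint Borel sets $E_i\subset\R^n$ such that each curve $\gamma\in\Gamma_i$ has vanishing line integral outside $E_i$ (equivalently, the part of $\gamma$ outside $E_i$ has length zero for the purposes of the modulus). Given any $\rho$ admissible for $\bigcup_i\Gamma_i$, define $\rho_i=\rho\,\chi_{E_i}$. For $\gamma\in\Gamma_i$, the hypothesis gives $\int_\gamma\rho_i\,ds=\int_\gamma\rho\,ds\ge 1$, so $\rho_i$ is admissible for $\Gamma_i$. Disjointness of the $E_i$ makes $\sum_i\rho_i^n\le\rho^n$, so $\sum_i\M(\Gamma_i)\le\sum_i\int\rho_i^n\,dm\le\int\rho^n\,dm$. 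Taking the infimum over $\rho$ gives the reverse inequality, and combined with (2) yields equality. The main subtle point, and essentially the only obstacle beyond bookkeeping, is to handle the ``separate'' condition cleanly; since the excerpt cites V\"ais\"al\"a \cite[\S 6]{V71} for the precise formulation, I would adopt its definition verbatim so that the vanishing-outside-$E_i$ property of the line integrals is literally what is assumed, making the above argument immediate.
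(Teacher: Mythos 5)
Your proof is correct and is essentially the standard argument from the cited source \cite[\S 6]{V71}; the paper itself gives no proof of this lemma, only that citation. The only cosmetic difference is that V\"ais\"al\"a proves subadditivity with $\rho=\sup_i\rho_i$ (using $\rho^n\le\sum_i\rho_i^n$) rather than your $\ell^n$-combination $\bigl(\sum_i\rho_i^n\bigr)^{1/n}$, and your treatment of the equality case via $\rho_i=\rho\,\chi_{E_i}$, with the line integral over the part of $\gamma$ outside $E_i$ vanishing, is exactly his.
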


Here the families $\Gamma_i$ are said to be \emph{separate}
if they are contained in pairwise disjoint Borel sets $E_i$
(see also \cite[\S 4.2.2]{GMP}).

Let $G$ be a domain in $\Sn$ and $E, F\subset\overline G.$
In what follows, $\Delta(E,F;G)$ will stand for the family of all the curves that
are in $G$ except for the endpoints, and 
that have one endpoint in the set $E$ and another endpoint in $F$ 
\cite[pp. 11-25]{V71}, \cite{HKV20}. 
When $G=\mathbb{R}^n$ or $G=\overline{\mathbb{R}}^n,$  we often write 
$\Delta(E,F;G)=\Delta(E, F).$

\begin{lemma}\label{cgqm_5.14}\emph{\cite[Thm 7.5, p. 22]{V71}}
If $0<a<b$ and $D=\overline{B}^n(b)\backslash B^n(a)$,
\begin{align*}
\M(\Delta(S^{n-1}(a),S^{n-1}(b);D))=\omega_{n-1}(\log({b}/{a}))^{1-n},
\end{align*}
where $\omega_{n-1}$ is the $(n-1)$-dimensional surface area of the 
unit sphere   $S^{n-1}$.
\end{lemma}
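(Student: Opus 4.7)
The plan is to prove the equality by sandwiching $\M(\Delta(S^{n-1}(a),S^{n-1}(b);D))$ between matching upper and lower bounds. The key observation is that the family contains all radial segments $\{tu: a\le t\le b\}$ for $u\in S^{n-1}$, and these are the obvious extremal curves by symmetry.

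\medskip

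\noindent\textbf{Step 1 (upper bound via an admissible metric).} I would exhibit the radially symmetric density
\[
\rho(x)=\frac{1}{|x|\log(b/a)}\quad\text{for }a\le|x|\le b,\qquad \rho(x)=0\text{ otherwise.}
\]
For any locally rectifiable curve $\gamma$ joining $S^{n-1}(a)$ and $S^{n-1}(b)$ inside $D$, projecting $\gamma$ onto the radial coordinate $r(t)=|\gamma(t)|$ shows that the arclength integral satisfies $\int_\gamma \rho\,ds\ge \int_a^b \frac{dr}{r\log(b/a)}=1$, so $\rho$ is admissible. Computing in spherical coordinates,
\[
\int_D \rho^n\,dm=\int_{S^{n-1}}\!\!\int_a^b \frac{r^{n-1}}{r^n(\log(b/a))^n}\,dr\,d\sigma = \frac{\omega_{n-1}}{(\log(b/a))^{n-1}},
\]
which gives the upper bound.

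\medskip

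\noindent\textbf{Step 2 (lower bound via H\"older on radial rays).} For every admissible $\rho$ and every unit vector $u\in S^{n-1}$, the radial segment $\gamma_u(t)=tu$, $t\in[a,b]$, lies in $\Delta(S^{n-1}(a),S^{n-1}(b);D)$, so $\int_a^b \rho(tu)\,dt\ge 1$. Writing $\rho(tu)=\rho(tu)\,t^{(n-1)/n}\cdot t^{-(n-1)/n}$ and applying H\"older's inequality with exponents $n$ and $n/(n-1)$,
\[
1\le\left(\int_a^b \rho(tu)^n\,t^{n-1}\,dt\right)^{1/n}\left(\int_a^b\frac{dt}{t}\right)^{(n-1)/n}=\left(\int_a^b \rho(tu)^n\,t^{n-1}\,dt\right)^{1/n}(\log(b/a))^{(n-1)/n}.
\]
Raising to the $n$-th power and integrating over $u\in S^{n-1}$ with respect to surface measure, Fubini gives
\[
\int_D \rho^n\,dm=\int_{S^{n-1}}\!\!\int_a^b \rho(tu)^n\,t^{n-1}\,dt\,d\sigma(u)\ge \frac{\omega_{n-1}}{(\log(b/a))^{n-1}}.
\]
Taking the infimum over admissible $\rho$ yields the matching lower bound.

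\medskip

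\noindent\textbf{Obstacles and remarks.} Both bounds are essentially routine once the density and the H\"older split are chosen correctly; the real content is recognising that radial segments are extremal, which is dictated by the spherical symmetry. The only subtlety is ensuring that every curve in $\Delta(S^{n-1}(a),S^{n-1}(b);D)$ — not merely the radial ones — is caught by the admissibility condition, but this is immediate from the monotonicity of $|\gamma(t)|$ along a radial projection. Since the statement is classical, I would in fact just cite \cite[Thm 7.5, p.~22]{V71}, which is exactly what the authors do.
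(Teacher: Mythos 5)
Your argument is correct, and it is the standard (indeed essentially V\"ais\"al\"a's own) proof of this classical identity: the radially symmetric density gives the upper bound, and H\"older on radial rays plus Fubini gives the matching lower bound. The paper itself gives no proof at all and simply cites \cite[Thm 7.5, p.~22]{V71}, which you also note you would do, so the two approaches coincide.
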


\begin{cor}\label{setsInRing}  Let $0<a<b$ and 
$E, F \subset D\equiv\overline{B}^n(b)\backslash B^n(a)$,
and let $$\M(\Delta(E,F;\R^n))\ge c>0.$$ Then there exists $\lambda=\lambda(n,c)>1$ 
such that for all $t\ge \lambda$
\begin{align*}
\M(\widetilde{\Delta})\ge c/2, \quad \widetilde{\Delta}= 
\Delta(E,F;B^n(0,t b)\setminus B^n(0,a/t)).
\end{align*}
\end{cor}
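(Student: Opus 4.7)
The plan is to split the curve family $\Delta(E,F;\mathbb{R}^n)$ into the part that stays inside the large annulus $A_t := B^n(0,tb)\setminus \overline{B}^n(0,a/t)$ and the part that escapes, then control the escaping part using the explicit spherical-annulus modulus from Lemma \ref{cgqm_5.14}.

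Concretely, write $\Gamma := \Delta(E,F;\mathbb{R}^n)$ and note that every curve $\gamma\in\Gamma$ has both endpoints in the small annulus $D\subset \overline{B}^n(b)\setminus B^n(a)$. Thus if $\gamma \notin \widetilde\Delta$, then $\gamma$ meets $\mathbb{R}^n\setminus A_t$, so by continuity $\gamma$ either hits $S^{n-1}(tb)$ or hits $S^{n-1}(a/t)$. In the first case $\gamma$ contains a subcurve joining $S^{n-1}(b)$ to $S^{n-1}(tb)$ in $\overline{B}^n(tb)\setminus B^n(b)$; call the family of such subcurves $\Gamma_1$. In the second case $\gamma$ contains a subcurve joining $S^{n-1}(a)$ to $S^{n-1}(a/t)$ in $\overline{B}^n(a)\setminus B^n(a/t)$; call this family $\Gamma_2$. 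Therefore every curve in $\Gamma\setminus\widetilde\Delta$ is minorized by $\Gamma_1\cup\Gamma_2$, and so by Lemma \ref{cgqm_5.3}(1),
\begin{equation*}
\Gamma \;\subset\; \widetilde\Delta \cup \{\gamma : \gamma>\Gamma_1\cup\Gamma_2\},
\end{equation*}
which, combined with monotonicity and subadditivity (Lemma \ref{cgqm_5.3}), yields
\begin{equation*}
\M(\Gamma)\;\le\;\M(\widetilde\Delta)+\M(\Gamma_1)+\M(\Gamma_2).
\end{equation*}

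By Lemma \ref{cgqm_5.14} applied to each of the two spherical annuli,
\begin{equation*}
\M(\Gamma_1)=\M(\Gamma_2)=\omega_{n-1}(\log t)^{1-n}.
\end{equation*}
Choosing
\begin{equation*}
\lambda = \lambda(n,c) := \exp\!\bigl((4\omega_{n-1}/c)^{1/(n-1)}\bigr)>1,
\end{equation*}
we obtain for all $t\ge\lambda$ that $\M(\Gamma_1)+\M(\Gamma_2)\le c/2$, and hence
\begin{equation*}
\M(\widetilde\Delta)\;\ge\;\M(\Gamma)-\tfrac{c}{2}\;\ge\;\tfrac{c}{2},
\end{equation*}
which is the claim.

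The only point requiring care is the minorization step: one must check that the escaping subcurves really do join the stated pairs of concentric spheres (not just touch one of them), which follows from the intermediate value theorem applied to $|\gamma(\cdot)|$ together with the fact that the endpoints of $\gamma$ lie in $D$, so $a\le|\gamma(\text{endpoint})|\le b$. Once this is in place, the rest is a routine application of the two lemmas already recorded, so I do not anticipate any substantive difficulty.
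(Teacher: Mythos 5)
Your proof is correct and takes essentially the same route as the paper: decompose $\Gamma$ into $\widetilde\Delta$ and the escaping family, minorize the escaping curves by subcurves crossing the two spherical shells $\overline{B}^n(tb)\setminus B^n(b)$ and $\overline{B}^n(a)\setminus B^n(a/t)$, apply Lemma \ref{cgqm_5.14} and subadditivity, and solve for $\lambda$; your $\lambda=\exp\bigl((4\omega_{n-1}/c)^{1/(n-1)}\bigr)$ matches the paper's exactly. One tiny notational nit: since you defined $\Gamma_1,\Gamma_2$ as the families of \emph{subcurves} of escaping curves rather than the full families $\Delta(S^{n-1}(b),S^{n-1}(tb);\cdot)$ and $\Delta(S^{n-1}(a/t),S^{n-1}(a);\cdot)$, the equalities $\M(\Gamma_1)=\M(\Gamma_2)=\omega_{n-1}(\log t)^{1-n}$ should be upper bounds $\le$, which is of course all you use.
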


\begin{proof}
By the subadditivity of the modulus, Lemma \ref{cgqm_5.3}, 
\[
\M(\Delta(E,F;\R^n))\le \M(\widetilde{\Delta}) + 
\M(\Delta(E,F;\R^n) \setminus \widetilde{\Delta})
\]
whereas by Lemma \ref{cgqm_5.14}
\[
 \M(\Delta(E,F;\R^n) \setminus \widetilde{\Delta}) \le
  2 \omega_{n-1} (\log t)^{1-n}\le c/2
\]
for all $t \ge \lambda \equiv {\rm exp} ( (4 \omega_{n-1}/c)^{1/(n-1)})>1.$
\end{proof}

The {\it Teichm\"uller ring} is a domain in $\mathbb{R}^n$ with the complementary 
components $[-e_1,0]$ and $[s e_1,\infty], s >0.$ The modulus of the 
family of all curves joining these boundary components, 
denoted by $\tau_n(s),$  is a
decreasing homeomorphism $\tau_n:(0,\infty)\to (0,\infty)$ and admits the 
following lower bound
\begin{equation}\label{teichlow}
\tau_n(s) \ge c_n \log\left(1+ \frac{2(1+\sqrt{1+s})}{s}\right) 
\ge 2 c_n \log(1+\frac{1}{\sqrt{s}})\,,\quad 
c_n=B(\frac{1}{2(n-1)},\frac{1}{2})^{1-n} \omega_{n-2},
\end{equation}
where $B(\cdot, \cdot)$ is the beta function, $ c_2= 2/\pi$ \cite[p.114]{HKV20}.
For $n=2,$ $\tau_2$ can be expressed explicitly in terms of complete elliptic
integrals \cite[p. 123]{HKV20}. 

The function $\tau_n$ often occurs as a lower bound for moduli of curve 
families like in the following lemma, based on the spherical 
symmetrization of condensers. This lemma has found many applications
because it provides, for a pair of non-degenerate continua
$E$ and $F,$ an explicit connection between the geometric quantity
$d(E,F)/\min\{d(E),d(F)\}$ and the modulus of the family of all curves joining
the continua. Also a similar upper bound holds \cite[7.42]{cgqm},
 \cite[Rmk 9.30]{HKV20}, but the upper bound will not be needed here.

\begin{lem}\label{ringcap} Let $E$ and $F$ be continua in $\mathbb{B}^n$ with
$d(E), d(F)>0.$ Then
 \flushleft{(1)} \hspace{1cm}$ \M(\Delta(E,F;\mathbb{R}^n )) \ge \tau_n(4 m^2+4 m),$
 \flushleft{(2)} \hspace{1cm}$ \M(\Delta(E,F;\mathbb{B}^n )) \ge 
 \frac{1}{2}\tau_n(4 m^2+4 m),$
\newline
where $m=d(E,F)/\min\{d(E),d(F)\}.$ 
\end{lem}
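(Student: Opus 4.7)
The plan is to prove part (1) by reducing the pair $(E, F)$ to a Teichm\"uller configuration via a M\"obius inversion followed by spherical symmetrization, and then to deduce (2) from (1) by a reflection argument across $S^{n-1}$.

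For (1), the similarity invariance of the modulus lets me normalize so that $\min\{d(E), d(F)\} = 1$; without loss of generality $d(E) = 1 \le d(F)$, so the hypothesis becomes $d(E, F) = m$. Pick $x_0 \in E$, $y_0 \in F$ with $|x_0 - y_0| = m$, and use that a continuum of diameter $d$ cannot lie in a ball of radius $d/2$ around any of its points to extract $x_1 \in E$, $y_1 \in F$ with $|x_0 - x_1| \ge 1/2$ and $|y_0 - y_1| \ge 1/2$. After translation so that $x_0 = 0$ and rotation so that $y_0 = m e_1$, I apply the M\"obius inversion $\varphi(z) = (z - x_0)/|z - x_0|^2$, which sends $x_0 \mapsto \infty$. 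Using the standard distortion formula $|\varphi(a) - \varphi(b)| = |a - b|/(|a - x_0|\,|b - x_0|)$, one checks that $\infty \in \varphi(E) \subset (\R^n \setminus \B^n) \cup \{\infty\}$ and $\varphi(F) \subset \overline{B}^n(0, 1/m)$, and that both images are continua meeting specific spheres centered at the origin, as recorded by the positions of $\varphi(x_1), \varphi(y_0), \varphi(y_1)$. I then apply spherical symmetrization of the pair about the $e_1$-axis; this does not increase the modulus (a standard property of spherical symmetrization for ring-type condensers, cf.\ the references cited at the end of Section~2 of the paper). The symmetrized pair contains a Teichm\"uller configuration $([-a e_1, 0], [b e_1, \infty])$ whose parameter $s = b/a$ satisfies $s \le 4m^2 + 4m = (2m+1)^2 - 1$, obtained by a direct geometric computation from the worst-case positions of the key points under $\varphi$. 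Combining the M\"obius invariance of the modulus, the monotonicity of the family under set inclusion, and the fact that $\tau_n$ is a decreasing homeomorphism, I conclude
$$\M(\Delta(E, F; \R^n)) = \M(\Delta(\varphi(E), \varphi(F); \R^n)) \ge \tau_n(s) \ge \tau_n(4m^2 + 4m).$$

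For (2), I use the reflection $\sigma(x) = x/|x|^2$ through $S^{n-1}$, a conformal involution with $\sigma(\B^n) = \R^n \setminus \overline{\B}^n$ and $\sigma(x) = x$ for $x \in S^{n-1}$. Given an admissible function $\rho_0$ for $\Delta(E, F; \B^n)$, I extend it to $\rho_0^*$ on $\R^n$ by $\rho_0^*(x) = \rho_0(\sigma(x))\,|\sigma'(x)|$ for $x \notin \overline{\B}^n$; a conformal change of variables gives $\int_{\R^n} (\rho_0^*)^n\,dx = 2\int_{\B^n} \rho_0^n\,dx$. Any curve $\gamma \in \Delta(E, F; \R^n)$ can be ``folded'' by applying $\sigma$ to its portions outside $\B^n$, producing a curve $\tilde\gamma \in \Delta(E, F; \B^n)$ with $\int_\gamma \rho_0^* = \int_{\tilde\gamma} \rho_0 \ge 1$, so $\rho_0^*$ is admissible for $\Delta(E, F; \R^n)$. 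Taking infima yields $\M(\Delta(E, F; \R^n)) \le 2\,\M(\Delta(E, F; \B^n))$, and (2) follows from (1). The main obstacle is the explicit geometric verification that the Teichm\"uller parameter is bounded by $4m(m+1)$ after inversion and symmetrization: this requires careful tracking of the images of $x_1, y_0, y_1$ under $\varphi$ and of the radial extents of $\varphi(E)$ and $\varphi(F)$, with the factor $4m(m+1)$ emerging naturally because both $d(E)$ and $d(F)$ contribute quadratically to the Teichm\"uller ratio after the M\"obius inversion.
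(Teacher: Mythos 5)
The paper's own proof is a direct citation of \cite[7.22, Lemma 9.26, Lemma 7.14]{HKV20} (equivalently \cite[5.33, 7.38, 5.22]{cgqm}); you are instead reproving the cited lemmas from scratch, which is a legitimate route. Your reflection/folding argument for (2) is essentially the standard proof of \cite[Lemma 7.14]{HKV20} and is correct as sketched.

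Your argument for (1), however, has a gap at the symmetrization step. After inverting at $x_0\in E$ you symmetrize ``about the $e_1$-axis,'' i.e.\ with center at the origin; but the origin is $\varphi(\infty)$ and lies in neither $\varphi(E)$ nor $\varphi(F)$, and spherical symmetrization centered outside both complementary components need not produce a Teichm\"uller pair. Concretely, $F$ may lie on the sphere $S^{n-1}(x_0,d(E,F))$ (compatible with all the hypotheses whenever $m\ge1/2$); then $\varphi(F)\subset S^{n-1}(0,1/m)$ has zero radial extent, its symmetrization about the origin is a spherical cap rather than a radial segment $[-ae_1,0]$, and the Teichm\"uller configuration you claim to extract degenerates, yielding only the trivial bound $\tau_n(\infty)=0$. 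The correct procedure is to invert at a \emph{far} point $y_1\in F$ with $|y_0-y_1|\ge d(F)/2$ (so that $\infty\in\varphi(F)$) and then symmetrize about $a=\varphi(x_0)\in\varphi(E)$, not about the origin: since $\varphi(E)$ is a continuum through $a$ and $b=\varphi(x_1)$ it meets every $S^{n-1}(a,\rho)$ with $\rho\le|a-b|$, and since $\varphi(F)$ is a continuum through $\infty$ and $c=\varphi(y_0)$ it meets every $S^{n-1}(a,\rho)$ with $\rho\ge|a-c|$, so the symmetrized condenser contains the Teichm\"uller pair $[a-|a-b|e_1,a]$, $[a+|a-c|e_1,\infty]$ with no appeal to any radial extent of $\varphi(F)$. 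This yields
$\M(\Delta(E,F;\R^n))\ge\tau_n\bigl(|a-c|/|a-b|\bigr)=\tau_n\bigl(|x_0-y_0|\,|x_1-y_1|/(|x_0-x_1|\,|y_0-y_1|)\bigr)$,
and with $|x_0-y_0|=d(E,F)$, $|x_1-y_1|\le|x_0-x_1|+d(E,F)+|y_0-y_1|$ and $|x_0-x_1|,|y_0-y_1|\ge\tfrac12\min\{d(E),d(F)\}$ one gets exactly $4m^2+4m$. Note that the ``nearest-point'' cross ratio naturally suggested by your choice of $x_0$ as inversion center only gives $(2m+1)^2=4m^2+4m+1$, so the deferred ``direct geometric computation'' is not routine: the choice of which cross ratio to estimate matters for obtaining the stated constant.
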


\begin{proof} If $d(E;F)=0,$ then 
$ \M(\Delta(E,F;\mathbb{B}^n ))= \M(\Delta(E,F;\mathbb{R}^n ))=\infty$
by \cite[7.22]{HKV20} (or \cite[5.33]{cgqm}).  
If $d(E;F)>0,$ the proof of(1) follows from  
\cite[Lemma 9.26]{HKV20} (or \cite[7.38]{cgqm})
 and the proof of (2) from (1) and \cite[Lemma 7.14]{HKV20} (or \cite[5.22]{cgqm}).

\end{proof}
\begin{nonsec}{\bf Quasiconformal self-homeomorphism of a domain. }
\label{qcself}
For a proper subdomain $G$ of $\R^n$ and for a fixed point $x\in G$, we define 
a homeomorphism $f:\R^n\to\R^n$ such that $f(z)=z$ if $z=x$ or 
$|x-z|\geq d(x,\partial G)$. Furthermore, for $0<{\alpha} <\beta<1$, 
the mapping $f$ fulfills 
$f(S^{n-1}(x,{\alpha} d(x,\partial G)))=S^{n-1}(x,\beta \,d(x,\partial G))$.

Let $h:\R^n\to\R^n$ be the radial map $x\mapsto|x|^{a-1}x=h(x)$, $a\neq 0$, 
defined in \cite[16.2, p. 49]{V71}. Suppose $0<{\alpha},{\beta}<1$ and we want 
to choose $a$ so that $h(S^{n-1}({\alpha}))=S^{n-1}({\beta})$. Then 
\begin{equation} \label{ValueA}
{\alpha}^a={\beta}\quad\Leftrightarrow\quad a=\frac{\log({\beta})}{\log({\alpha})}  
\end{equation}
and, as shown in \cite[16.2]{V71}, the maximal dilatation of this map is 
$K(h)=\max\{a^{n-1},a^{1-n}\}$. By definition, $h(z)=z$ for $z\in\{0\}\cup S^{n-1}$. 
Define now a function $g$ such that $g(z)=h(z)$ for all $z\in\overline{\B}^n$ but 
$g(z)=z$ whenever $|z|\geq1$. By \cite[Thm 35.1, p. 118]{V71}, the dilatation of 
the mapping $g$ is same as the one of $h$.    

Fix now $x$ in $G$ and let $r=d(x,\partial G)$. Let $f$ be a radial 
$K$-quasiconformal map defined by $f(z)=rg((z-x)\slash r)$, $z\in\R^n$.
Then $f(x)=x$, $f(z)=z$ for all $z\in G\backslash B^n(x,r)$ and 
$f(B^n(x,{\alpha}\, r))=B^n(x,{\beta}\, r)$, similarly as above.
\end{nonsec}

We summarize the above arguments as a lemma.

\begin{lemma}\label{qcMap} For a proper subdomain $G$ of $\R^n$ and 
for a fixed point $x\in G$, there
exists a quasiconformal homeomorphism $f:\R^n\to\R^n$ such that $f(G) = G,$ 
and $f(z)=z$ if $z=x$ or
 $|x-z|\geq d(x,\partial G)$. Furthermore, for $0<{\alpha} <\beta<1$, 
 \[f(S^{n-1}(x,{\alpha} d(x,\partial G)))=S^{n-1}(x,\beta \,d(x,\partial G)), \quad
 K(f)=\max\{a^{n-1},a^{1-n}\},\]
where $a$ is the number in \eqref{ValueA}. 

\end{lemma}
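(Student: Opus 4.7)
The plan is to construct $f$ as a radial stretch inside the ball $B^n(x,r)$ extended by the identity outside, precisely as outlined in the preceding subsection \ref{qcself}. First I would take the radial map $h:\R^n\to\R^n$, $h(z)=|z|^{a-1}z$, with exponent $a=\log\beta/\log\alpha>0$ as in \eqref{ValueA}, so that $h(S^{n-1}(\alpha))=S^{n-1}(\beta)$. By \cite[16.2]{V71}, this $h$ is a homeomorphism of $\R^n$ with maximal dilatation $K(h)=\max\{a^{n-1},a^{1-n}\}$, and it fixes both $0$ and every point of $S^{n-1}$, the latter because $|h(z)|=|z|^a$ equals $|z|$ when $|z|=1$ and $h$ preserves directions.

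Next I would glue $h$ with the identity: set $g(z)=h(z)$ for $|z|\le 1$ and $g(z)=z$ for $|z|\ge 1$. Since the two branches agree on $S^{n-1}$, $g$ is a self-homeomorphism of $\R^n$, and the removable singularity / gluing theorem \cite[Thm 35.1, p.~118]{V71} ensures $g$ is quasiconformal with $K(g)=K(h)$, because $S^{n-1}$ has measure zero. Then I would conjugate by the similarity $\sigma(z)=x+rz$, where $r=d(x,\partial G)$, and define $f(z)=x+rg((z-x)/r)$ for $z\in\R^n$. Since $\sigma$ is conformal, $K(f)=K(g)=\max\{a^{n-1},a^{1-n}\}$, and the defining formula gives directly $f(x)=x$, $f(z)=z$ whenever $|z-x|\ge r$, and $f(S^{n-1}(x,\alpha r))=S^{n-1}(x,\beta r)$.

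It remains only to verify $f(G)=G$. Because $r=d(x,\partial G)$, the ball $B^n(x,r)$ is contained in $G$, while $f$ coincides with the identity on $\R^n\setminus B^n(x,r)$; so it suffices to check that $f(B^n(x,r))=B^n(x,r)$. This is immediate from $g(\overline{\B}^n)=\overline{\B}^n$, which follows at once from the fact that $t\mapsto t^a$ is a monotone bijection of $[0,1]$ onto itself. No genuine obstacle arises; the two ingredients one must not skip over are the verification that the exponent $a$ from \eqref{ValueA} realizes the prescribed sphere-to-sphere condition (a direct computation with $|h(z)|=|z|^a$), and the legitimacy of the gluing via \cite[Thm 35.1]{V71}, which applies here precisely because $h$ is the identity on $S^{n-1}$ so the two branches match continuously across the gluing sphere.
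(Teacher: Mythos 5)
Your proposal is correct and follows exactly the same route as the paper, which itself just summarizes the construction laid out in the preceding subsection: take the radial map $h(z)=|z|^{a-1}z$ with exponent $a=\log\beta/\log\alpha$ from \cite[16.2]{V71}, glue it with the identity across $S^{n-1}$ (justified by \cite[Thm 35.1]{V71}), and conjugate by the similarity $z\mapsto x+rz$ with $r=d(x,\partial G)$. Your explicit verification of $f(G)=G$ and your written formula $f(z)=x+rg((z-x)/r)$ are slightly more careful than the paper's text (which omits the $x+$ and does not spell out $f(G)=G$), but these are cosmetic; the mathematical content is identical.
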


 \bigskip

\section{Harnack inequality and capacity test  functions}

In the dimensions $n\ge 3$, quasihyperbolic distances defined below are 
widely used as substitutes of  hyperbolic distances.

\begin{nonsec}{\bf Quasihyperbolic metric.} \label{defQHyp}
For a domain $G\subsetneq {\mb{R}^n}$, the 
{\it quasihyperbolic metric} ${k}_G$ is 
defined by \cite[p. 39]{GMP}, \cite[p.68]{HKV20}
$${k}_{G}(x,y)=
\inf_{\gamma \in \Gamma} \int_{\gamma}\frac{|dz|}{d(z, \partial G)}, 
\quad x, y \in G,$$
where $\Gamma$ is the family of all rectifiable curves in $G$ joining $x$ and $y$.
This infimum is attained when $\gamma $ is the  {\it quasihyperbolic geodesic segment}
joining $x$ and $y\,.$ The hyperbolic metric of $\B^n$ can be also defined in terms
of a similar length minimizing property, with the weight function $2/(1-|x|^2)\,.$ 
In many ways the quasihyperbolic metric is similar to the 
hyperbolic metric, see \cite[Chapter 5]{HKV20}, but unfortunately its values
are known only in a few special cases. Fortunately, some lower bounds 
can be given in terms of the
$j_G$ metric and upper bounds can be given for a large class of domains as we will now
show.

The \emph{distance ratio metric} 
is defined  in a domain $G\subsetneq\R^n$ as the function $j_G:G\times G\to[0,\infty)$,
\begin{align*}
j_G(x,y)=\log(1+\frac{|x-y|}{\min\{d(x,\partial G),d(y,\partial G)\}}).   
\end{align*}

The lower bound 
$$j_G(x,y)\le k_G(x,y)$$ 
holds for an arbitrary domain $G\subsetneq {\mb{R}^n}$ and all $x,y \in G$ 
\cite[Cor. 5.6, p.69]{HKV20}.
\end{nonsec}

For the upper bound we introduce a class of domains for which we have
a simple upper bound of the quasihyperbolic distance. This upper bound
combined with the above lower bound provide handy estimates for many applications.

\begin{nonsec}{\bf $\varphi$- uniform domains.}
We say that a domain $G$ is $\varphi$-{\it uniform} if for all $x,y \in G$ 
\[k_G(x,y)\le \varphi(|x-y|/\min\{ d(x,\partial G), d(y,\partial G)   \})\,.\]
The special case $\varphi(t) = c\log(1+t), c>1\,,$ yields the so called 
{\it uniform domains} which are ubiquitous in geometric
function theory  \cite[p.84]{HKV20}, \cite{H}. For instance, balls and 
half-spaces and their images under quasiconformal mappings of $\mathbb{R}^n$
belong to  this class of domains. It is easy to check that all convex domains 
are $\varphi$-uniform with $\varphi(t) \equiv t\,.$ The strip domain 
$ \{ z \in \mathbb{C}: 0<{\rm Im} z <1\}$
is  $\varphi$-uniform but not uniform.
\end{nonsec}

\begin{nonsec}{\bf Harnack functions \cite[p. 96]{HKV20}.} 
Let $G \subset \mathbb{R}^n$ be a domain and
let $u:G \to (0,\infty)$ be a continuous function. We say that $u$ is 
a \emph{Harnack function} with parameters $(s,C), 0 < s < 1 <C,$ 
if for every $z \in G$ and all $x \in \overline{B}^n(z, s d(z,\partial G))$
\[
u(z) \le C u(x) \,.
\]
\end{nonsec}

\medskip 
It follows easily from the definition of the quasihyperbolic metric, 
see \cite[p. 69, Lemma 5.7]{HKV20},
that the balls $\overline{B}^n(z, s d(z,\partial G))$ in the definition 
of a Harnack function
have quasihyperbolic diameters majorized by a constant depending on 
$s$ only. For given
$x, y \in G$ one can now estimate for a Harnack function $u$ the quotient 
$u(x)/u(y)$
using the quasihyperbolic distance $k_G(x,y)$ in a simple way as 
shown in \cite[pp.94-95]{HKV20}.
In fact, we fix a quasihyperbolic geodesic in $G$ joining $x$ and $y$
\cite[p. 68, Lemma 5.1]{HKV20},
 and cover it optimally, using as few  balls 
 $\overline{B}^n(z, s d(z,\partial G))$ as possible.  In this way, we
obtain the next lemma.

\medskip

\begin{lemma} \cite[Lemma 6.23, p.96]{HKV20} \label{HarBd} Let 
$u:G \to (0,\infty)$ be a Harnack function with parameters
$(s,C)\,.$ 

(1) Then
\[
u(x) \le C^{1+t} \, u(y)\,, \quad t=k_G(x,y)/d_1(s)\,,
\]
for $x,y \in G$ where $d_1(s)= 2 \log(1+s)\,.$

(2) If a domain $G\subset \B^n, 0 \in G\,,$ is $\varphi$-uniform, 
then for $d(y,\partial G)< d(0,\partial G)$
\[
u(y)/u(0)> C^{-1-\varphi(2/d(y,\partial G))/d_1(s)}\,.
\]
\end{lemma}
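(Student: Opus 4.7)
The plan is to prove (1) by chaining the Harnack inequality along a quasihyperbolic geodesic, and then to deduce (2) from (1) by invoking the $\varphi$-uniform hypothesis to bound $k_G(0, y)$ from above.

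For part (1), the first step is to take a quasihyperbolic geodesic $\gamma$ joining $x$ and $y$ (its existence is standard; see \cite[Lemma 5.1, p.68]{HKV20}) and parametrize it by $k_G$-arc length on $[0, L]$ with $L = k_G(x, y)$. Next I would choose $N = \lceil L/d_1(s) \rceil$ and partition $\gamma$ into $N$ subarcs of equal $k_G$-length, yielding a chain $x = x_0, x_1, \dots, x_N = y$ with $k_G(x_{i-1}, x_i) \le d_1(s)$. The central step is to show that each consecutive pair lies in a common Harnack ball: the inequality $j_G \le k_G$ (recorded in \ref{defQHyp}), combined with the bound $k_G(x_{i-1}, x_i) \le 2\log(1+s)$, translates, after inserting an intermediate midpoint $m_i$ on $\gamma$ with $k_G(x_{i-1}, m_i), k_G(m_i, x_i) \le \log(1+s)$, into the Euclidean control $|x_{i-1} - m_i|,\, |m_i - x_i| \le s\,d(m_i, \partial G)$. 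The Harnack hypothesis then yields $u(x_{i-1}) \le C\,u(m_i)$ and $u(m_i) \le C\,u(x_i)$; iterating the resulting estimate along the chain produces $u(x) \le C^{1+t} u(y)$ with $t = L/d_1(s)$, after suitably matching the step size to the constant $d_1(s) = 2\log(1+s)$.

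For part (2), I apply (1) with $x$ replaced by $0$ and $y$ the given point. The definition of a $\varphi$-uniform domain gives
\[
k_G(0, y) \le \varphi\!\left(\frac{|y|}{\min\{d(0, \partial G),\, d(y, \partial G)\}}\right).
\]
Since $G \subset \B^n$ contains both $0$ and $y$, we have $|y| = |y - 0| < 2$, and the hypothesis $d(y, \partial G) < d(0, \partial G)$ forces the minimum to equal $d(y, \partial G)$. By monotonicity of $\varphi$ this gives $k_G(0, y) \le \varphi(2/d(y, \partial G))$, so that $t \le \varphi(2/d(y, \partial G))/d_1(s)$, and part (1) applied to the pair $(0, y)$ yields $u(0) \le C^{1+t} u(y)$. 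This is equivalent to the claimed bound $u(y)/u(0) \ge C^{-1 - \varphi(2/d(y, \partial G))/d_1(s)}$.

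The principal technical point is the correct exponent in the chaining in (1): one must convert the $k_G$-bound on each subarc into an Euclidean proximity condition allowing a controlled Harnack step with the exact constant $d_1(s) = 2\log(1+s)$. Once the matching between the subdivision scale and the Harnack-ball geometry is arranged (either directly via $j_G \le k_G$ or through the midpoint construction above), the rest is a routine iteration, and the deduction of (2) from (1) is an immediate substitution using the $\varphi$-uniform inequality.
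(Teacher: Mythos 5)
Your overall approach matches the paper's own brief sketch (fix a quasihyperbolic geodesic, cover it with Harnack balls, and iterate the one-sided comparison), and your deduction of part (2) from part (1) via the $\varphi$-uniform inequality and the fact that $|y| < 1 < 2$ for $y \in G \subset \B^n$ is correct. The problem is in the exponent you extract from the chain in part (1), which is precisely the point you flag as the ``principal technical point'' but then leave unresolved.

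Concretely: you take $N = \lceil L/d_1(s)\rceil = \lceil t\rceil$ subarcs of $k_G$-length $\le d_1(s) = 2\log(1+s)$, and for each subarc you insert a midpoint $m_i$ with $k_G(x_{i-1},m_i), k_G(m_i,x_i)\le \log(1+s)$. Each such half-step gives one application of the Harnack condition via $j_G\le k_G$, so each subarc contributes $u(x_{i-1})\le C\,u(m_i)\le C^2 u(x_i)$, and iterating yields $u(x)\le C^{2N}u(y) = C^{2\lceil t\rceil}u(y)$. This is not the stated bound $C^{1+t}$: for $t>1$ one has $2\lceil t\rceil \ge 2t > 1+t$, so your argument gives a genuinely weaker estimate. (Dropping the midpoint and chaining directly with $k_G$-steps of length $\log(1+s)$ gives $C^{\lceil 2t\rceil}$, which is essentially the same $\approx C^{2t}$ behaviour and still misses $C^{1+t}$ by a factor of roughly $2$ in the exponent.) The issue cannot be waved away by ``suitably matching the step size'': a $k_G$-step of size $d_1(s)=2\log(1+s)$ does \emph{not} in general land the next point in the current Harnack ball, since $j_G\le 2\log(1+s)$ only gives $|x_{i-1}-x_i|\le((1+s)^2-1)\min\{d(x_{i-1},\partial G),d(x_i,\partial G)\}$, which exceeds $s\,d(x_{i-1},\partial G)$. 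So each $d_1(s)$-subarc really does cost you a factor of $C^2$ under the stated one-sided Harnack hypothesis.

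To close this gap you would need either to exhibit a covering of the geodesic by only $\lceil t\rceil$ Harnack balls in which one passes from each ball to the next at the cost of a \emph{single} factor of $C$ (which does not follow from the one-sided hypothesis as written), or to reconcile the constant: with $d_1(s)=\log(1+s)$ your direct chaining gives exactly $M=\lceil L/d_1(s)\rceil=\lceil t\rceil\le 1+t$ steps and hence $u(x)\le C^{1+t}u(y)$. As it stands, the assertion that the iteration ``produces $u(x)\le C^{1+t}u(y)$'' is the one place where the argument is not a proof; everything else, including part (2), is fine.
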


We next start our study of the capacity test  function
\begin{equation}\label{uFun}
u_{\alpha}(z) = \capa (G, \overline{B}^n(z, \alpha d(z,\partial G)))\,, 
\quad \alpha \in (0,1)\,.
\end{equation}
and examine its dependence on $\alpha$ and $z$ when the other argument
is fixed. It turns out that the 
dependence of the capacity test  function
on $\alpha$ is controlled by standard ring domain capacity estimates from
\ref{qcself} whereas,  as a function of $z,$  it is continuous and
satisfies a Harnack condition.

\begin{lemma}\label{lemma_moduli_uv}
Let $G\subset\R^n$, $x\in G\,,$  $0<{\alpha}<{\beta}<1\,,$ and 
$a=\log({\beta})\slash\log({\alpha})$. Then
\begin{align*}
\M(\Gamma_{\alpha})\leq\M(\Gamma_{\beta})\leq  a^{n-1}\,\M(\Gamma_{\alpha}),   
\end{align*}
where  $\Gamma_s=\Delta(S^{n-1}(x,sd(x,\partial G)),\partial G;G)$ for 
$s={\alpha},{\beta}$.
In other words,
\[
u_{\alpha}(x) \le u_{\beta}(x)\le a^{n-1} u_{\alpha}(x) \,.
\]
\end{lemma}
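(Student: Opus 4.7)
The plan is to prove the two inequalities separately: the left one by monotonicity of the modulus (Lemma \ref{cgqm_5.3}(1)) and the right one by quasiconformal invariance of the modulus applied to the self-homeomorphism supplied by Lemma \ref{qcMap}. Throughout, write $d=d(x,\partial G)$ and $S_s = S^{n-1}(x,sd)$ for $s\in(0,1)$.

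For $\M(\Gamma_\alpha)\le\M(\Gamma_\beta)$, the key point is that $\Gamma_\beta$ minorizes $\Gamma_\alpha$. Any $\gamma\in\Gamma_\alpha$ starts on $S_\alpha$ and terminates on $\partial G$; since the latter endpoint lies at Euclidean distance at least $d>\beta d$ from $x$, the intermediate value theorem applied to $t\mapsto|\gamma(t)-x|$ produces a passage through $S_\beta$, and the subcurve from this crossing to the $\partial G$-endpoint belongs to $\Gamma_\beta$. Hence $\Gamma_\alpha>\Gamma_\beta$, and Lemma \ref{cgqm_5.3}(1) yields the desired inequality.

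For the upper bound, I would apply the quasiconformal self-homeomorphism $f:\R^n\to\R^n$ from Lemma \ref{qcMap}: it is the identity on $\R^n\setminus B^n(x,d)$ (so in particular on $\partial G$), preserves $G$ setwise, carries $S_\alpha$ onto $S_\beta$, and has dilatation $K(f)=\max\{a^{n-1},a^{1-n}\}$. From these properties I would verify the set equality $f(\Gamma_\alpha)=\Gamma_\beta$, each inclusion reducing to the observation that a curve in $G$ with one endpoint on $S_\alpha$ and the other on $\partial G$ maps under $f$ to a curve in $G$ with one endpoint on $S_\beta$ and the $\partial G$-endpoint fixed (and symmetrically for $f^{-1}$). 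The quasi-invariance of modulus then gives $\M(\Gamma_\beta)=\M(f\Gamma_\alpha)\le K(f)\,\M(\Gamma_\alpha)$, which is the stated bound. The assertion for $u_\alpha(x),u_\beta(x)$ is then immediate via the standard identification $u_s(x)=\capa(G,\overline{B}^n(x,sd))=\M(\Gamma_s)$. The main technical subtlety is the upgrade of the quasi-invariance bound to the precise equality $f(\Gamma_\alpha)=\Gamma_\beta$ (rather than merely a one-sided inclusion), but this is exactly what the explicit construction of $f$ in Lemma \ref{qcMap} provides.
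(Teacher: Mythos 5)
Your proof follows the paper's approach exactly: the left inequality via the minorization principle (Lemma \ref{cgqm_5.3}(1)), and the right via quasi-invariance of the modulus under the radial quasiconformal self-map $f$ of Lemma \ref{qcMap}, with $f(\Gamma_\alpha)=\Gamma_\beta$ holding because $f$ preserves $G$, fixes $\partial G$ pointwise, and maps $S_\alpha$ onto $S_\beta$. Your intermediate-value argument for $\Gamma_\alpha>\Gamma_\beta$ is a clean justification of a step the paper leaves implicit. One caveat, which originates in the printed statement but which your write-up reproduces: since $0<\alpha<\beta<1$ forces $a=\log\beta/\log\alpha\in(0,1)$, the dilatation is $K(f)=\max\{a^{n-1},a^{1-n}\}=a^{1-n}$, not $a^{n-1}$, so quasi-invariance actually delivers $\M(\Gamma_\beta)\le a^{1-n}\M(\Gamma_\alpha)$. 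The extremal case $G=\B^n$, $x=0$ confirms the correct exponent: by Lemma \ref{cgqm_5.14}, $\M(\Gamma_\beta)/\M(\Gamma_\alpha)=(\log\alpha/\log\beta)^{n-1}=a^{1-n}>1$, whereas $a^{n-1}<1$ would contradict the left inequality. So you should not have closed with ``which is the stated bound'' without evaluating the maximum and noticing that the exponent in the lemma as printed is reversed (equivalently, the statement should set $a=\log\alpha/\log\beta$); the same reversal propagates into the Harnack constant $C$ of Theorem \ref{uHarnack}.
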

\begin{proof}
The first inequality follows from Lemma \ref{cgqm_5.3} and, 
by using the quasiconformal map $f$ of \ref{qcself}, we see that 
the second inequality holds. 
\end{proof}
\medskip

The above result shows that for a fixed $x \in G,$ $u_{\alpha}(x)$ is 
continuous with respect to the parameter $\alpha$
because $a\to 1$ when $\beta\to \alpha\,.$ The next result shows, 
among other things, that
for a fixed $\alpha\in (0,1),$ $u_{\alpha}(x)$ is continuous as a function of $x\,.$
This continuity follows from the domain monotonicity of the capacity 
\eqref{domainMono} and Lemma \ref{lemma_moduli_uv}.
\medskip

\begin{theorem} \label{uHarnack} 
Let $G$ be a domain in $\R^n$ with boundary of positive capacity.
Let $\alpha \in (0,1)$ and choose $s\in(0,1)$ so small that $(1+s)\alpha+s<1.$
The capacity test  function $u_\alpha$ of $G$ is continuous on $G$ and
satisfies the Harnack inequality with parameters $(s,C),$ where
$$
C=\left(\frac{\log((1+s)\alpha+s)}{\log\alpha}\right)^{n-1}.
$$
\end{theorem}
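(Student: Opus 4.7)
The plan is to deduce the Harnack inequality from a single geometric containment of balls, which will reduce to Lemma~\ref{lemma_moduli_uv} via the monotonicity of conformal capacity with respect to the plate. Continuity in $z$ will then drop out by running the same argument with the roles of $x$ and $z$ swapped and letting $s\to 0$.

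First I would fix $z\in G$ and $x\in\overline{B}^n(z,s\,d(z,\partial G))$. The triangle inequality gives $d(x,\partial G)\le |x-z|+d(z,\partial G)\le (1+s)d(z,\partial G)$. For any $y\in\overline{B}^n(x,\alpha d(x,\partial G))$,
$$
|y-z|\le|y-x|+|x-z|\le \alpha d(x,\partial G)+s\,d(z,\partial G)\le \bigl((1+s)\alpha+s\bigr)d(z,\partial G)=\beta\, d(z,\partial G),
$$
with $\beta=(1+s)\alpha+s<1$ by hypothesis. Hence $\overline{B}^n(x,\alpha d(x,\partial G))\subset \overline{B}^n(z,\beta d(z,\partial G))$. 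Now capacity is monotone in the plate: if $E_1\subset E_2\subset G$ then $\Delta(E_1,\partial G;G)\subset\Delta(E_2,\partial G;G)$, so by Lemma~\ref{cgqm_5.3}(1), $\capa(G,E_1)\le \capa(G,E_2)$. Applying this together with Lemma~\ref{lemma_moduli_uv} (with the pair $(\alpha,\beta)$, noting $\alpha<\beta<1$) yields
$$
u_\alpha(x)\le u_\beta(z)\le\left(\frac{\log\beta}{\log\alpha}\right)^{n-1}u_\alpha(z)=C\,u_\alpha(z),
$$
which is exactly the claimed Harnack-type inequality with parameters $(s,C)$.

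For continuity at a fixed $z\in G$, I would apply the same ball-inclusion argument in the reverse direction. If $x\in\overline{B}^n(z,s\,d(z,\partial G))$ with $s$ small, then $|x-z|\le s\,d(z,\partial G)\le \tfrac{s}{1-s}d(x,\partial G)$, so $z\in\overline{B}^n(x,s'\,d(x,\partial G))$ with $s'=s/(1-s)$. Repeating the containment estimate with $x$ and $z$ interchanged gives $u_\alpha(z)\le C' u_\alpha(x)$ for a constant $C'(s')=(\log((1+s')\alpha+s')/\log\alpha)^{n-1}$. Since $\beta\to\alpha$ and $s'\to 0$ as $s\to 0$, both $C(s)$ and $C'(s)\to 1$, so $u_\alpha(x)/u_\alpha(z)\to 1$ as $x\to z$. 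Because $\capa(\partial G)>0$ ensures $u_\alpha(z)>0$, this gives the continuity of $u_\alpha$ on $G$.

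The main technical obstacle is arranging the correct direction of the ball containment so that the increment in the exponent of the Harnack constant matches $\beta=(1+s)\alpha+s$; a naive bound would use $d(z,\partial G)\le d(x,\partial G)/(1-s)$ and produce the larger quantity $(\alpha+s)/(1-s)$. The asymmetry between $d(x,\partial G)\le(1+s)d(z,\partial G)$ and the reciprocal bound is what forces the slightly weaker containment in one direction and a slightly stronger one in the other; for continuity this asymmetry is harmless since $s$ may be chosen arbitrarily small, but for the optimal Harnack constant one must be careful to use the one-sided inequality $d(x,\partial G)\le(1+s)d(z,\partial G)$.
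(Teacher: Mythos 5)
Your proof is correct and follows essentially the same route as the paper: you establish the ball inclusion $\overline{B}^n(x,\alpha\,d(x,\partial G))\subset\overline{B}^n(z,\beta\,d(z,\partial G))$ with $\beta=(1+s)\alpha+s$ via the triangle inequality, and combine plate monotonicity of capacity with Lemma~\ref{lemma_moduli_uv} to obtain the Harnack inequality. Your continuity argument is a mild streamlining of the paper's: rather than re-deriving the reverse inclusion $\overline{B}^n(z,\alpha\,d(z,\partial G))\subset\overline{B}^n(x,\beta'\,d(x,\partial G))$ by hand, you observe that $z\in\overline{B}^n(x,s'\,d(x,\partial G))$ with $s'=s/(1-s)$ and apply the already-proved Harnack inequality with roles swapped; note that $(1+s')\alpha+s'=(\alpha+s)/(1-s)$, so your constant $C'(s')$ coincides exactly with the paper's constant $D$, and both tend to $1$ as $s\to 0$.
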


\begin{proof}
For $z \in G$, we take  $x \in \overline{B}^n(z, s\,d(z,\partial G))\,;  $ namely,
\begin{align}\label{3.9_ine_xz}
|x-z|\leq s\,d(z,\partial G).
\end{align}
It follows from the triangle inequality and the inequality \eqref{3.9_ine_xz} that 
\begin{align}
d(x,\partial G)\leq|x-z|+d(z,\partial G)\leq(1+s)d(z,\partial G)\,, \label{3.9_ine_tri1}\\ 
(1-s)d(z,\partial G)\leq d(z,\partial G)-|x-z|\leq d(x,\partial G)\,.\label{3.9_ine_tri2}
\end{align}
Note here that $\overline{B}^n(a,r_a)\subset\overline{B}^n(b,r_b)$ if and only if $|a-b|+r_a\leq r_b$. 
By using the inequalities \eqref{3.9_ine_xz} and \eqref{3.9_ine_tri1}, we have
\begin{align*}
|x-z|+\alpha d(x,\partial G)\leq
s\,d(z,\partial G)+(1+s)\alpha\,d(z,\partial G)=
((1+s)\alpha+s)\,d(z,\partial G)\,.
\end{align*}
Similarly, by the inequalities \eqref{3.9_ine_xz} and \eqref{3.9_ine_tri2},
\begin{align*}
|x-z|+\alpha d(z,\partial G))\leq
(s+\alpha)d(z,\partial G)\leq
\frac{s+\alpha}{1-s}d(x,\partial G)\,.
\end{align*}
Therefore, we have
\[ \overline{B}^n(x,\alpha\, d(x,\partial G)) \subset  
 \overline{B}^n(z,((1+s)\alpha+s)d(z,\partial G))\,,\]  
 \[ \overline{B}^n(z,\alpha \,d(z,\partial G))  \subset  
 \overline{B}^n(x,\beta\,d(x,\partial G))\,,\] 
where $\beta = (\alpha+s)/(1-s)\,.$ 
In particular, with the help of Lemma \ref{lemma_moduli_uv}, we obtain
\begin{align*}
u_{\alpha}(x)&\le  u_{(1+s)\alpha+s}(z) \le C u_{\alpha}(z)\,, \\
u_{\alpha}(z)&\le  u_{\beta}(x) \le D u_{\alpha}(x),
\end{align*}
where $D=[\log \beta/\log\alpha]^{n-1}.$
The first inequality is nothing but the required Harnack inequality.
Since $C<D,$ we obtain
$$
|u_{\alpha}(z) - u_{\alpha}(x)|\le D
=\left(\frac{\log[(\alpha+s)/(1-s)]}{\log\alpha}\right)^{n-1}.
$$
Now the continuity of $u_\alpha$ follows because $D\to 1$ as $s\to 0.$
\end{proof}

\bigskip

\bigskip

Let $G\subset \R^n$ be a domain with $\capa (\partial G)> 0, z_1 \in G$ and fix
$\alpha=1/4\,.$  Theorem  \ref{uHarnack} and  Lemma \ref{HarBd} show that, 
perhaps surprisingly, {\it  the  speed of decrease of the function 
$u_{\alpha}(z)/u_{\alpha}(z_1)$ to $0$ when $k_G(z_1,z)\to \infty$ or $z \to \partial G$
is controlled  from below by the Harnack parameters given by Theorem 
\ref{uHarnack}  and by $k_G(z_1,z)\,.$ }   

\medskip



\section{Capacity test  function}

Various capacities are widely applied in geometric function theory to 
investigate the metric size of sets \cite{GM,du}. We use here the 
conformal  capacity of condensers and prove several lemmas involving this capacity.
We begin by pointing out the connection between the condenser capacity and the modulus
of a curve family.
These lemmas, together with the superannulus Proposition \ref{ringProp},
are applied to prove Lemma \ref{TwoSets}, which will be a key tool
for the proof of a main result in Section 7.

A domain $D\subset\overline{\R}^n$ is called a \emph{ring} if its complement 
$\overline{\R}^n\backslash D$ has exactly two components $C_0$ and $C_1$. 
Sometimes, we write $D=R(C_0, C_1).$
We say that a ring $D=R(C_0,C_1)$ \emph{separates} a set $E$, if $E\subset C_0\cup C_1$ 
and if $E$ meets both of $C_0$ and $C_1.$
As in \cite[7.16, p. 120]{HKV20}, the (conformal) \emph{modulus} of a ring 
$D=R(C_0,C_1)$ is
defined by
\begin{align*}
\mod(D)
=\left(\frac{\M(\Delta(C_0,C_1))}{\omega_{n-1}}\right)^{1\slash(1-n)}  \end{align*} 
and its capacity is $\capa(D)=\M(\Delta(C_0,C_1))$.

\begin{definition}\label{def_condensercapacity}\cite[Def. 9.2, p. 150]{HKV20}
A pair $E=(A,C)$ where $A\subset\R^n$ is open and non-empty, and $C\subset A$ 
is compact and non-empty is called a \emph{condenser}. The \emph{capacity} of 
this condenser $E$ is
\begin{align*}
{\rm cap}(E)=\inf_u\int_{\R^n}|\nabla u|^n dm,
\end{align*}
where the infimum is taken over the family of all non-negative $\text{ACL}^n$ 
functions $u$ with compact support in $A$ such that $u(x)\geq1$ for $x\in C$. 
A compact set $E$ is of 
{\it capacity zero}, denoted by  $ \capa E = 0\,,$ if $\capa(A,C)= 0\,$ for 
some bounded domain
$A, C \subset A\,.$ Otherwise we denote $\capa E>0\,$ and say that $E$ is 
of positive capacity.
Note that the definition of capacity zero does not depend on the
open bounded set $A$ \cite[pp.150-153]{HKV20}.
\end{definition}

For the definition of $\text{ACL}$ and $\text{ACL}^n$ mappings, see 
\cite[Def. 26.2, p. 88; Def. 26.5 p. 89]{V71}, \cite[6.4]{GMP}. It is useful to recall 
the close connection between the modulus
of a curve family and capacity, because many properties of curve 
families yield similar properties for the capacity. 

\begin{remark} \label{GehringZiem} \cite[p.164, Thm 5.2.3]{GMP}, 
\cite[Thm 9.6, p. 152]{HKV20}
The capacity of a condenser $E=(A,C)$ can also be expressed in terms of a modulus
of a curve family as follows:
\begin{align*}
{\rm cap}(E)=\M(\Delta(C,\partial A;A))\,.
\end{align*}
\end{remark}

 \begin{lemma}\label{marsarbd}
Let $E \subset {\mathbb R}^n$ be a compact set, $x\in {\mathbb R}^n,$ 
and let $0<r<s<t.$ Then
\[
A\ge {\capa}(B^n(x,t),E \cap \overline{B}^n(x,r)) \ge 
\left(\frac{\log(s/r)}{\log(t/r)}\right)^{n-1} A, \quad
A\equiv {\capa}(B^n(x,s),E \cap \overline{B}^n(x,r)) .
\] 
 \end{lemma}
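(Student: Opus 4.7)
The plan is to decouple the lemma into its two inequalities. The upper bound $A\ge\capa(B^n(x,t),E\cap\overline{B}^n(x,r))$ is the domain monotonicity of capacity: because $B^n(x,s)\subset B^n(x,t)$, any $\mathrm{ACL}^n$ function admissible for the condenser $(B^n(x,s),E\cap\overline{B}^n(x,r))$ extends by zero to an admissible function for $(B^n(x,t),E\cap\overline{B}^n(x,r))$ with the same $L^n$-norm of the gradient. Equivalently, via Remark \ref{GehringZiem}, the curve family $\Delta(C,S^{n-1}(x,t);B^n(x,t))$ is minorised by $\Delta(C,S^{n-1}(x,s);B^n(x,s))$, so Lemma \ref{cgqm_5.3}(1) delivers the same conclusion.

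For the substantive lower bound I would construct an explicit radial quasiconformal homeomorphism $f\colon B^n(x,s)\to B^n(x,t)$, exactly in the spirit of the construction in \ref{qcself} and Lemma \ref{qcMap}. In polar coordinates centred at $x$, set $f(x+\rho\theta)=x+\rho\theta$ for $0\le\rho\le r$ and $f(x+\rho\theta)=x+r(\rho/r)^b\,\theta$ for $r\le\rho\le s$, where the exponent $b=\log(t/r)/\log(s/r)>1$ is chosen precisely so that $f(S^{n-1}(x,s))=S^{n-1}(x,t)$. The inner piece is the identity, hence $1$-quasiconformal, while the outer piece is a pure radial stretching of constant ratio $b$; by \cite[16.2]{V71} (cf.\ \eqref{ValueA}) this outer piece is $K$-quasiconformal with
\[
K=\max\{b^{n-1},b^{1-n}\}=b^{n-1}=\Bigl(\frac{\log(t/r)}{\log(s/r)}\Bigr)^{n-1}.
\]
Since the two pieces agree continuously on $S^{n-1}(x,r)$, the removability theorem \cite[Thm 35.1]{V71} invoked already in \ref{qcself} guarantees that the combined map is $K$-quasiconformal on all of $B^n(x,s)$.

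Setting $C:=E\cap\overline{B}^n(x,r)$, the map $f$ fixes $C$ pointwise and carries $S^{n-1}(x,s)$ onto $S^{n-1}(x,t)$, so it sends $\Gamma_s:=\Delta(C,S^{n-1}(x,s);B^n(x,s))$ bijectively onto $\Gamma_t:=\Delta(C,S^{n-1}(x,t);B^n(x,t))$. V\"ais\"al\"a's modulus characterisation of $K$-quasiconformality then yields $\M(\Gamma_s)\le K\,\M(\Gamma_t)$, and combining this with the modulus-capacity identity of Remark \ref{GehringZiem} gives
\[
\capa(B^n(x,t),C)=\M(\Gamma_t)\ge\frac{1}{K}\M(\Gamma_s)=\Bigl(\frac{\log(s/r)}{\log(t/r)}\Bigr)^{n-1}A,
\]
which is the asserted lower bound.

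The only non-routine point is the verification that the piecewise radial map glues into an honest $K$-quasiconformal homeomorphism across $S^{n-1}(x,r)$; since the inner piece is the identity and the outer piece extends continuously to the identity on this sphere, the standard removability result for quasiconformal mappings applies without subtlety, and I do not foresee any substantial obstacle beyond this bookkeeping.
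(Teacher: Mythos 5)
Your proof is correct and takes essentially the same approach as the paper, whose one-line proof cites Remark~\ref{GehringZiem} (the modulus--capacity identity) together with Lemma~\ref{qcMap}/\ref{qcself} (the radial power map, its dilatation $a^{n-1}$, and the removability theorem). You have simply made explicit the piecewise radial construction fixing $\overline{B}^n(x,r)$ and stretching $S^{n-1}(x,s)$ to $S^{n-1}(x,t)$, the gluing across $S^{n-1}(x,r)$, and the direction of the modulus quasi-invariance inequality, all of which the paper leaves to the reader.
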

 
\begin{proof} The proof follows immediately from Remark 
\ref{GehringZiem} and Lemma \ref{qcMap}.
 \end{proof}

Let $E=(A,C)$ be a condenser and $A_1$ a domain with $A \subset A_1\,.$ 
It follows readily from
the definition of the capacity (and also from Lemma \ref{cgqm_5.3})
that the following {\it domain monotonicity property} holds
\begin{equation} \label{domainMono}
\capa(A_1,C)\le \capa(A,C)\,.
\end{equation}

 \medskip
  For   a compact set $E \subset {\mathbb R}^n$ ,  $x\in {\mathbb R}^n,$ and  
  $r>0,$ we introduce
  the notation
 \begin{equation} \label{myDelta}
  {\rm cap}(x,E,r)= \M(\Delta(x,E,r))\,, \quad
  \Delta(x,E,r)= \Delta(S^{n-1}(x,2r), 
  E \cap \overline{B}^n(x,r); {\mathbb R}^n)\,.
  \end{equation}

  \medskip
  The condition $ {\rm cap}(x,E,r)>0$ gives information about the size of the set
  $E$ in a neighborhood of the point $x.$ The next lemma shows that there is
  a substantial part of the set $E,$ in the sense of capacity, in an annulus
  centered at $x.$
  
  \begin{lemma}\label{basicLem}
Let  $E \subset {\mathbb R}^n$  be  a compact set,  $x\in {\mathbb R}^n,$ and  $r>0$ and 
suppose that
\[
 \M(\Delta(x,E,r) )= c >0.
\]
If $ \Delta_1=\Delta(S^{n-1}(x,2r), E \cap \overline{B}^n(x,r); B^n(x,2r)),$ then
\[
 \M(\Delta(x,E,r))= \M(\Delta_1).
  \]
Fix $\lambda= \lambda(n,c)=\max\{ t,2\},$ where $t>1$ satisfies
\[
\omega_{n-1} (\log 2 t)^{1-n}= c/2\,,
\]
and, for $\sigma>1, let$
\[\widetilde{\Delta}(x,E,r,\sigma)= 
\Delta(S^{n-1}(x,2r), E \cap (\overline{B}^n(x,r)\setminus
{B}^n(x,r/\sigma)); \overline{B}^n(x,2r)\setminus
{B}^n(x,r/\sigma)).\]
Then for every $\sigma\ge \lambda,$  
\[
\M(\widetilde{\Delta}(x,E,r,\sigma))\ge c/2.
\]

 \end{lemma}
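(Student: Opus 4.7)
The plan is to establish both claims using the monotonicity and subadditivity properties of modulus from Lemma~\ref{cgqm_5.3} together with the explicit ring modulus formula of Lemma~\ref{cgqm_5.14}.

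For the first claim $\M(\Delta(x,E,r)) = \M(\Delta_1)$, I would argue by two inclusions in the sense of minorization. The inclusion $\Delta_1 \subset \Delta(x,E,r)$ is immediate from the definitions, since any curve joining $S^{n-1}(x,2r)$ to $E\cap \overline{B}^n(x,r)$ inside $B^n(x,2r)$ is \emph{a fortiori} such a curve in $\R^n$; monotonicity then gives $\M(\Delta_1)\le \M(\Delta(x,E,r))$. For the reverse direction, given $\gamma\in\Delta(x,E,r)$ parametrized so that $\gamma(0)\in S^{n-1}(x,2r)$ and $\gamma(1)\in E\cap \overline{B}^n(x,r)$, I would let $t_0$ be the last time $\gamma$ meets $S^{n-1}(x,2r)$; the subcurve $\gamma|_{[t_0,1]}$ lies in $\overline{B}^n(x,2r)$ with interior in $B^n(x,2r)$ and therefore belongs to $\Delta_1$. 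Hence $\Delta(x,E,r)>\Delta_1$ and Lemma~\ref{cgqm_5.3}(1) gives the opposite inequality.

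For the second claim I would split $\Delta_1$ into two subfamilies: $\Gamma_{\mathrm{out}}$, consisting of the curves that avoid the open ball $B^n(x,r/\sigma)$, and $\Gamma_{\mathrm{in}}$, consisting of the curves that enter it. Curves in $\Gamma_{\mathrm{out}}$ lie in $\overline{B}^n(x,2r)\setminus B^n(x,r/\sigma)$ and, since they avoid $B^n(x,r/\sigma)$, their $E$-endpoints automatically lie in $E\cap(\overline{B}^n(x,r)\setminus B^n(x,r/\sigma))$. Thus $\Gamma_{\mathrm{out}}\subset \widetilde{\Delta}(x,E,r,\sigma)$ and monotonicity gives $\M(\Gamma_{\mathrm{out}})\le \M(\widetilde{\Delta}(x,E,r,\sigma))$. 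Each curve in $\Gamma_{\mathrm{in}}$ must cross $S^{n-1}(x,r/\sigma)$, so taking the subcurve from its start on $S^{n-1}(x,2r)$ to its first hit of $S^{n-1}(x,r/\sigma)$ produces a subcurve in $\Delta(S^{n-1}(x,2r),S^{n-1}(x,r/\sigma);\overline{B}^n(x,2r)\setminus B^n(x,r/\sigma))$, whose modulus equals $\omega_{n-1}(\log 2\sigma)^{1-n}$ by Lemma~\ref{cgqm_5.14}. Minorization therefore yields $\M(\Gamma_{\mathrm{in}})\le \omega_{n-1}(\log 2\sigma)^{1-n}$.

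Combining these pieces via subadditivity, together with the first claim, produces
\[
c=\M(\Delta_1)\le \M(\Gamma_{\mathrm{in}})+\M(\Gamma_{\mathrm{out}})\le \omega_{n-1}(\log 2\sigma)^{1-n}+\M(\widetilde{\Delta}(x,E,r,\sigma)).
\]
The choice $\sigma\ge \lambda\ge t$, combined with the defining relation $\omega_{n-1}(\log 2t)^{1-n}=c/2$, makes the first term at most $c/2$, so $\M(\widetilde{\Delta}(x,E,r,\sigma))\ge c/2$. The only delicate point I anticipate is the careful extraction of subcurves in both parts: checking that they actually lie in (or minorize) the prescribed curve families with the correct endpoint and containment conditions. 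Once this bookkeeping is handled, the argument reduces to a routine combination of monotonicity, subadditivity, and the spherical ring modulus formula.
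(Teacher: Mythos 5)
Your proposal is correct and follows essentially the same approach as the paper: decompose $\Delta_1$ according to whether curves enter $B^n(x,r/\sigma)$, bound the "inner" part via minorization by the ring family of Lemma~\ref{cgqm_5.14}, bound the "outer" part via monotonicity by $\widetilde{\Delta}$, and conclude by subadditivity. The only minor difference is that the paper cites V\"ais\"al\"a's Theorem 11.3 for the equality $\M(\Delta(x,E,r))=\M(\Delta_1)$ rather than reproving it via subcurve extraction, and your explicit split into $\Gamma_{\mathrm{out}}$ and $\Gamma_{\mathrm{in}}$ is a slightly more careful bookkeeping of what the paper writes informally as $\Delta_1\setminus\widetilde{\Delta}$.
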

 

 \begin{proof} The equality  $ \M(\Delta(x,E,r))= \M(\Delta_1)$ is a basic
 property of the modulus, see  
 \cite[p.33, Thm 11.3]{V71}. 
 By the subadditivity of the modulus in Lemma \ref{cgqm_5.3}, 
 \[
  \M(\Delta(x,E,r))= \M(\Delta_1)\le \M(\widetilde{\Delta}(x,E,r,\sigma))+  
  \M(\Delta_1 \setminus \widetilde{\Delta}(x,E,r,\sigma))
 \]
 and, by the choice of $\lambda$ and Lemmas \ref{cgqm_5.3} and \ref{cgqm_5.14},
 \[
  \M(\Delta_1 \setminus \widetilde{\Delta}(x,E,r,\sigma))
  \le c/2
 \]
for all $\sigma\ge \lambda.$ Finally,
 \[
  \M(\widetilde{\Delta}(x,E,r,\sigma))\ge \M(\Delta_1)-  
  \M(\Delta_1 \setminus \widetilde{\Delta}(x,E,r,\sigma)) \ge c- c/2=  c/2.
 \]
\end{proof}
  
  \medskip
  
  According to Lemma \ref{basicLem} we can find, under the above assumptions,
  a substantial portion of the set $E$ in the annulus 
  $ \overline{B}^n(x,r)\setminus B^n(x, r/\lambda)$ for $x \in E.$ 
  We need to use this type of  annuli for two disjoint sets $E$ and $F$, which are close 
  enough to each other and then to find a lower bound for the modulus 
  of the curve family joining the respective substantial portions of each set.
  These annuli are translated versions of each other and we can use Proposition
  \ref{ringProp} to find a common superannulus for both annuli and consider the
  joining curves in this superannulus. 
  To quantify this idea, we need a comparison principle of 
  the modulus of a curve family from \cite[p.61 Lemma. 5.35]{cgqm} , 
  \cite[p.182, Thm 5.5.1]{GMP}.

\begin{lemma}\label{cgqm5.35.}  (1) \cite[p.61 Lemma. 5.35]{cgqm}  
Let $G$ be a domain in $\overline{\R}^n$, let $F_j\subset G$,
$j=1,2,3,4$, and let $\Gamma_{ij}=\Delta(F_i,F_j;G)$, $1\le i,j\le 4$. Then
$$
\M(\Gamma_{12})\ge 3^{-n}\min\{\,\M(\Gamma_{13}),\;\M(\Gamma_{24}),\;
     \inf \M\bigl(\Delta(|\gamma_{13}|,|\gamma_{24}|;G)\bigr)\,\}\;,
$$
where the infimum is taken over all rectifiable curves $\gamma_{13}\in
\Gamma_{13}$ and $\gamma_{24}\in \Gamma_{24}$.

(2) \cite[p.63, 5.41  and 5.42]{cgqm} If $F_j \subset B^n(z,s) \subset G, j=1,2,3,4,$ and 
and there exists $t>0$ such that for all $\gamma_{13}\in
\Gamma_{13}$ and $\gamma_{24}\in \Gamma_{24}$
\[d( |\gamma_{13}|) \ge t s, \quad d(|\gamma_{24}|) \ge ts,\]
then there is a constant $v=v(n,b/a,t)$ such that
\[
\M(\Gamma_{12})\ge v \min\{\,\M(\Gamma_{13}),\;\M(\Gamma_{24})\;
  \}\;.
\]
\end{lemma}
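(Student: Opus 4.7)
The plan for part (1) is a Beurling/Fuglede-style admissible-function argument. I would fix an arbitrary admissible metric $\rho$ for $\Gamma_{12}$ (so $\int_\gamma \rho\,ds \ge 1$ for every locally rectifiable $\gamma \in \Gamma_{12}$) and argue that $3\rho$ must be admissible for at least one of the three families appearing on the right-hand side of the bound. I proceed by contradiction: if $3\rho$ failed admissibility for all three, there would exist $\gamma_{13}\in\Gamma_{13}$ and $\gamma_{24}\in\Gamma_{24}$ with $\int_{\gamma_{13}}\rho\,ds<1/3$ and $\int_{\gamma_{24}}\rho\,ds<1/3$, and \emph{for this specific pair} also a $\beta\in\Delta(|\gamma_{13}|,|\gamma_{24}|;G)$ with $\int_\beta\rho\,ds<1/3$. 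Concatenating the subcurve of $\gamma_{13}$ from $F_1$ to $\beta\cap|\gamma_{13}|$, then $\beta$, then the subcurve of $\gamma_{24}$ from $\beta\cap|\gamma_{24}|$ back to $F_2$, produces a curve in $\Gamma_{12}$ along which $\int\rho\,ds<1$, contradicting admissibility. Since $\int(3\rho)^n\,dm=3^n\int\rho^n\,dm$, whichever of the three families $3\rho$ actually is admissible for has modulus bounded by $3^n\int\rho^n\,dm$; minimizing over admissible $\rho$ then yields $\M(\Gamma_{12})\ge 3^{-n}\min\{\cdots\}$.

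For part (2), the strategy is to replace the awkward infimum over pairs in (1) by an \emph{a priori} positive lower bound and then appeal to (1). Given the hypothesis that $|\gamma_{13}|,|\gamma_{24}|\subset B^n(z,s)$ are continua with diameters at least $ts$, rescaling by $1/s$ places the configuration inside $\B^n$, where the Teichm\"uller-type Lemma \ref{ringcap} applies and gives
$$
\M(\Delta(|\gamma_{13}|,|\gamma_{24}|;\R^n))\ge \tfrac12\,\tau_n(4m^2+4m),\qquad m=\frac{d(|\gamma_{13}|,|\gamma_{24}|)}{\min\{d(|\gamma_{13}|),d(|\gamma_{24}|)\}}\le\frac{2}{t},
$$
a positive quantity depending only on $n$ and $t$. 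The role of $b/a$ in the announced constant $v(n,b/a,t)$ is to govern the passage between modulus in $\R^n$ and modulus in the ambient $G$ (via a bounding annulus in $G$); the correction is absorbed by subadditivity (Lemma \ref{cgqm_5.3}) together with the explicit ring formula of Lemma \ref{cgqm_5.14}. Substituting the resulting lower bound into part (1) produces the claimed constant.

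The main obstacle I expect is the concatenation step in the contradiction argument of (1): one must verify that the three pieces actually form a curve in $\Gamma_{12}$ and that the line integral along the concatenation is additive. This is routine once one restricts to locally rectifiable curves and uses the standard convention that $\int_\gamma \rho=+\infty$ when $\gamma$ is not locally rectifiable (so such curves trivially satisfy any admissibility inequality and can be excluded). A secondary, more bookkeeping-like issue in (2) is pinning down the precise $b/a$ coming from the ambient domain in the cited reference; since it only affects the constant and not the structural mechanism, no new idea is needed beyond the monotonicity \eqref{domainMono} and Lemma \ref{cgqm_5.14}.
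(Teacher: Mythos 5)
Your argument for part (1) is correct and is essentially the standard Fuglede-style proof that the paper leaves to the cited reference; the concatenation and additivity issues you flag as the ``main obstacle'' are indeed routine once one adopts the convention that non-locally-rectifiable curves have infinite line integral.

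For part (2) you correctly identify the first step: lower-bounding the infimum in (1) by a constant $v_1(n,t)=\tfrac12\tau_n(4m^2+4m)$, $m\le 2/t$, using Lemma \ref{ringcap} (the factor $\tfrac12$ is the one that survives once you apply Lemma \ref{ringcap}(2) inside $B^n(z,s)$ and pass to $G$ by the fact that enlarging the domain enlarges the curve family and hence the modulus). But there is a genuine gap at the end. Substituting $v_1(n,t)$ into (1) yields
\[
\M(\Gamma_{12})\ge 3^{-n}\min\{\M(\Gamma_{13}),\M(\Gamma_{24}),v_1(n,t)\},
\]
which is \emph{not yet} of the form $v\min\{\M(\Gamma_{13}),\M(\Gamma_{24})\}$: when $v_1(n,t)$ is the smallest of the three, some comparison between the constant $v_1(n,t)$ and $\min\{\M(\Gamma_{13}),\M(\Gamma_{24})\}$ is required. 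Your explanation that $b/a$ ``governs the passage between modulus in $\R^n$ and modulus in $G$'' misidentifies the mechanism; no such correction is needed, for the reason just stated. What the paper actually does is invoke Lemma \ref{cgqm_5.14} together with the minorization principle (Lemma \ref{cgqm_5.3}(1)) to obtain an a priori \emph{upper} bound $A=\omega_{n-1}(\log(b/a))^{1-n}$ on both $\M(\Gamma_{13})$ and $\M(\Gamma_{24})$, because every $\gamma_{13}$ and $\gamma_{24}$ crosses the annulus $R(z,b,a)$ separating $\{F_1,F_2\}$ from $\{F_3,F_4\}$. Then $v_1(n,t)\ge (v_1(n,t)/A)\min\{\M(\Gamma_{13}),\M(\Gamma_{24})\}$, hence
\[
\min\{\M(\Gamma_{13}),\M(\Gamma_{24}),v_1(n,t)\}\ge\min\{1,v_1(n,t)/A\}\,\min\{\M(\Gamma_{13}),\M(\Gamma_{24})\},
\]
giving $v(n,b/a,t)=3^{-n}\min\{1,v_1(n,t)/A\}$. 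That conversion is the missing idea, and it is precisely where $b/a$ enters. In partial mitigation, the statement of part (2) never explicitly defines $a$ and $b$ --- they are the radii of the separating annulus, implicit from the setup in the application (Lemma \ref{TwoSets}) --- so part of the ambiguity is in the paper's statement rather than your reading of it.
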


\begin{proof} The first claim (1) is proved in the cited reference. 
The second claim also follows
easily from the cited reference but for clarity we include the details here.
 We apply the comparison principle of part (1)
to get a lower bound for $\M(\Gamma_{12})$. Because 
$F_j \subset B^n(z,s) \subset G, j=1,2,3,4,$
it follows from Lemma \ref{teichlow} that the
infimum in the lower bound of (1) is at least 
$v_1(n,t)\equiv\frac{1}{2}\tau_n(4m^2+4m),\, m=2/t,$ and
thus 
\[
\M(\Gamma_{12})\ge 3^{-n}\min \{\,\M(\Gamma_{13}),\;\M(\Gamma_{23}), v_1(n,t)\}.
\]
By Lemma \ref{cgqm_5.14}
$$
\max\bigl\{\,\M(\Gamma_{13}),\;\M(\Gamma_{23})\,\bigr\}\le
A \equiv\omega_{n-1}\Bigl(\log\frac{b}{a}\Bigr)^{1-n}\;
$$
and further
\begin{align*}
  \M(\Gamma_{12})&\ge 3^{-n}\min\Bigl\{\,\M(\Gamma_{13}),\;\M(\Gamma_{23}),\;
   {1\over A}\Bigl(v_1(n,t)\Bigr)\min\bigr\{
   \,\M(\Gamma_{13}),\;\M(\Gamma_{23})\,\bigr\}\Bigr\}\\
\noalign{\vskip2pt}
   &\ge v(n,b/a,t)\min\bigl\{\,\M(\Gamma_{13}),\;\M(\Gamma_{23})\,\bigr\}
\end{align*}
where $v(n,b/a,t)=3^{-n}\min\bigl\{\,1,\,\frac{1}{ A}\,v_1(n,t)\bigr\}$.

\end{proof}

   \medskip
   
The next lemma will be a key tool in Section \ref{sec7}.
It is based on three earlier results:
\begin{itemize}
\item[(a)]   the superannulus Proposition
\ref{ringProp},
\item[(b)]  the substantial subset selection Lemma \ref{basicLem},
\item[(c)]  the comparison principle
for the modulus of a curve family, Lemma \ref{cgqm5.35.}.   
\end{itemize}  

Recall the notation $R(x,  r',r)$ for $0<r<r'<+\infty$ 
and $x\in\R^n$ from \eqref{myRing}.

  \begin{lemma}\label{TwoSets}
  Let $n\ge 2, c >0$ and $\lambda=\lambda(n,c)\ge 2$ be as in Lemma \ref{basicLem}.
Let  $E, F \subset {\mathbb R}^n$  be   compact sets,  
$x,y\in {\mathbb R}^n,$ and  $r>0$ and 
suppose that
\[
 \M(\Delta(x,E,r) )= c >0, \quad  \M(\Delta(y,F,r) )= c >0.
\]
If $|x-y|<r/\lambda^2,$ then there exist constants 
$ \tau= \tau(n,c)\ge 2, d=d(n,c)>0$ such that with $w=(x+y)/2$
\[
\M(\Delta(E, F; R(w, \tau^2  r, r/ \tau^2)))\ge d .
\]
\end{lemma}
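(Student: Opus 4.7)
The plan is to chain together three tools already in the paper: the substantial-subset selection of Lemma \ref{basicLem}, the common-superannulus containment in Proposition \ref{ringProp}, and the comparison principle Lemma \ref{cgqm5.35.}(1), finishing with Corollary \ref{setsInRing} to localize the resulting curve family to the prescribed annulus.

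First I apply Lemma \ref{basicLem} separately at $(x,E,r)$ and $(y,F,r)$ with $\sigma=\lambda$, producing the thick subsets
$$A:=E\cap R(x,r,r/\lambda)\subset E,\qquad B:=F\cap R(y,r,r/\lambda)\subset F,$$
together with the lower bounds $\M(\Delta(S^{n-1}(x,2r),A;R(x,2r,r/\lambda)))\ge c/2$, and symmetrically for $B,y$. Since $|x-y|<r/\lambda^{2}$ and $\lambda\ge 2$, Proposition \ref{ringProp} (or a direct triangle-inequality argument) provides a common annulus about $w$, for instance $R(w,3r,r/(2\lambda))$, containing both of the original annuli and hence both $A$ and $B$.

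The heart of the proof is now to apply Lemma \ref{cgqm5.35.}(1) with $G=\R^n$ and $F_1=A,\ F_2=B,\ F_3=S^{n-1}(x,2r),\ F_4=S^{n-1}(y,2r)$. Domain monotonicity of the modulus gives $\M(\Gamma_{13}),\M(\Gamma_{24})\ge c/2$ from the estimates above. For the infimum term, every rectifiable $\gamma_{13}\in\Gamma_{13}$ lies in $\overline{B}^n(x,2r)$ and has diameter at least $r$, and similarly $\gamma_{24}\subset \overline{B}^n(y,2r)$ with $d(|\gamma_{24}|)\ge r$; since $|x-y|<r$, both curves sit inside $\overline{B}^n(w,3r)$. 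After rescaling this ball into $\B^n$, the geometric ratio $m=d(|\gamma_{13}|,|\gamma_{24}|)/\min\{d(|\gamma_{13}|),d(|\gamma_{24}|)\}$ stays bounded by an absolute constant (roughly $5$), so Lemma \ref{ringcap}(1) yields $\M(\Delta(|\gamma_{13}|,|\gamma_{24}|;\R^n))\ge C_0(n)>0$. Hence
$$\M(\Delta(A,B;\R^n))\ge 3^{-n}\min\{c/2,\,C_0(n)\}=:d'>0.$$

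Finally, I invoke Corollary \ref{setsInRing} with threshold $d'$: since $A,B\subset R(w,3r,r/(2\lambda))$, there is a constant $\lambda'=\lambda'(n,d')>1$ such that for every $t\ge\lambda'$,
$$\M(\Delta(A,B;R(w,3tr,r/(2\lambda t))))\ge d'/2.$$
Choosing $\tau=\tau(n,c)\ge 2$ large enough that $R(w,\tau^2 r,r/\tau^2)\supset R(w,3tr,r/(2\lambda t))$, domain monotonicity of the modulus together with $A\subset E$ and $B\subset F$ yields the required bound with $d=d'/2$. The main obstacle is essentially bookkeeping: tracking the interplay between $\lambda,\lambda',t,\tau$ and checking that $\tau$ and $d$ depend only on $n$ and $c$; the substantive analytic content is already packaged in the four cited results.
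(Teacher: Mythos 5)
Your proposal is correct and takes essentially the same approach as the paper: superannulus via Proposition \ref{ringProp}, substantial-subset selection via Lemma \ref{basicLem}, the comparison principle of Lemma \ref{cgqm5.35.}, and a subadditivity-based localization at the end. The only cosmetic difference is that you invoke Lemma \ref{cgqm5.35.}(1) directly together with Lemma \ref{ringcap} and Corollary \ref{setsInRing}, whereas the paper's more compressed proof applies Lemma \ref{cgqm5.35.}(2), which internalizes the Teichm\"uller-ring estimate, and then refers to Lemma \ref{basicLem} for the localization step.
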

\begin{proof} By Proposition \ref{ringProp}, we see that
\[
R(x,r, r/\lambda) \cup R(y,r, r/\lambda) \subset R(w, \lambda^2 r,  r/\lambda^2)
\]
and that $ R(w, \lambda^2 r,  r/\lambda^2)$ is a common superannulus for the
two smaller annuli.
We apply the comparison principle for the modulus,  Lemma \ref{cgqm5.35.}(2), with
$$F_1= E \cap R(x,r, r/\lambda),  \quad F_2= F \cap R(y,r, r/\lambda), $$
$$F_3= S^{n-1}(x, 2r), \quad F_4= S^{n-1}(y, 2r).$$ 
Observe first that, in the notation of Lemma  
\ref{cgqm5.35.}, for all $\gamma_{13}\in
\Gamma_{13}$ and $\gamma_{24}\in \Gamma_{24}$
\[ d(|\gamma_{13}|)\ge r , \quad d(|\gamma_{24}|)\ge r 
\]
and that, by Proposition \ref{ringProp},
 $F_j \subset R(w, 2 \lambda^2 r, r/\lambda^2)$ for $j=1,2,3,4.$
By Lemma \ref{cgqm5.35.}(2), we have
$$
\M(\Delta(F_1, F_2; \R^n))\ge d_2(n,c) >0.
$$
Using Lemma \ref{basicLem} we obtain the numbers $d= d_2(n,c)/2$ and $\tau=\sqrt{2}\lambda.$
\end{proof}

\section{Hausdorff content and lower estimate of capacity} \label{HcontCap}


In this section, we discuss lower bounds for the capacity
in terms of the Hausdorff $h$-content. Our main
references are O. Martio \cite{M} and Yu.G. Reshetnyak \cite{R1}, \cite[pp.110-120]{R2}.
Reshetnyak also cites an earlier lemma of H. Cartan 1928 and gives its proof based
on the work of L.V. Ahlfors \cite{Ah1930} (cf. R. Nevanlinna \cite[p.141]{N}).
We give here a short review of the earlier relevant
results and, for the reader's benefit,
outline sketchy proofs.

We start with the next covering lemma \cite[p.~197]{Land}.
In the following, we denote by $\chi_E$ the characteristic function of a set 
$E\subset \R^n;$
that is, $\chi_E(x)=1$ if $x\in E$ and $\chi_E(x)=0$ otherwise.

\begin{lem}\label{lem:cl}
Let $n$ be an integer with $n\ge 2$ and let $A$ be a set in $\R^n.$
Suppose that a radius $r(x)>0$ is assigned for each point $x\in A$
in such a way that $\sup_{x\in A}r(x)<+\infty.$
Then one can find a countable subset $\{x_k\}$ of $A$
such that
\begin{equation}\label{eq:cl}
\chi_A(x)\le \sum_k \chi_{B_k}(x)\le N_n,\quad x\in\R^n,
\end{equation}
where $B_k=B^n(x_k,r(x_k))$ and $N_n$ is a constant depending only on $n.$
\end{lem}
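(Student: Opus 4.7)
This lemma is a form of the Besicovitch covering theorem for open balls. My plan is to produce the sequence $\{x_k\}$ by a greedy selection in order of near-maximal radius and then to bound the multiplicity by a geometric packing argument rooted in kissing-number estimates in $\R^n$.

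\emph{Construction.} Set $R_1 = \sup_{x\in A} r(x) < +\infty$ and pick $x_1 \in A$ with $r(x_1) > R_1/2$. Inductively, given $x_1,\dots,x_{k-1}$, let $A_k = A \setminus \bigcup_{i<k} B^n(x_i, r(x_i))$; if $A_k = \emptyset$ the process stops, otherwise put $R_k = \sup_{x\in A_k} r(x)$ and choose $x_k \in A_k$ with $r(x_k) > R_k/2$. Two features follow at once: $|x_i-x_k|\ge r(x_i)$ whenever $i<k$ (because $x_k\notin B^n(x_i,r(x_i))$), and $r(x_k)\le R_k\le R_i<2\,r(x_i)$, so the radius of any later selected center is at most twice that of an earlier one.

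\emph{Coverage.} For the inequality $\chi_A\le \sum_k \chi_{B_k}$, I would first reduce to the case of bounded $A$ by splitting $A$ into the dyadic shells $A\cap\{2^m\le|x|\le 2^{m+1}\}$ and running the construction on each one separately. On a bounded piece, if some $x\in A$ were uncovered then $x\in A_k$ for every $k$, so $r(x_k)>r(x)/2$ for every $k$; combining this with $|x_i-x_k|\ge r(x_i)$ forces the balls $B^n(x_k, r(x)/4)$ to be pairwise disjoint inside a fixed bounded set, so the sequence must terminate in finitely many steps, contradicting $A_k\ne\emptyset$.

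\emph{Multiplicity.} Fix $y\in\R^n$ and put $J=\{k:y\in B_k\}$. For $i<k$ in $J$ one has $|y-x_i|<r(x_i)$, $|y-x_k|<r(x_k)<2\,r(x_i)$, and $|x_i-x_k|\ge r(x_i)$, so the three sides of the triangle $x_iyx_k$ are all comparable to $r(x_i)$. I would then show that the unit vectors $(x_k-y)/|x_k-y|$ for $k\in J$ are pairwise separated on $S^{n-1}$ by a dimension-dependent angle $\theta_0=\theta_0(n)>0$, whence the kissing-number bound for $\R^n$ yields $|J|\le N_n$ for a constant $N_n$ depending only on $n$. The main obstacle is exactly this angular separation: a direct law-of-cosines estimate at the vertex $y$ handles those indices $k$ for which $|y-x_k|$ is comparable to $r(x_k)$, but the indices where $y$ lies very deep inside $B_k$ must be handled separately by a packing of the centers themselves, invoking $|x_i-x_k|\ge r(x_i)$ and the comparability of radii. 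Once both regimes are absorbed into a single dimensional constant $N_n$, the two inequalities in \eqref{eq:cl} are proved.
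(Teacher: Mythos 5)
Your overall plan matches the approach the paper indicates (the paper does not reprove this lemma but cites Landkof, and its discussion makes clear that the multiplicity constant is controlled by $\varphi$-separated directions on $S^{n-1}$, i.e., by kissing numbers — exactly your strategy of greedy selection plus an angular packing bound). However, two steps are genuinely incomplete. First, the reduction to bounded $A$ by dyadic shells does not work: shells near the origin have diameter much smaller than $R_1:=\sup_{x\in A}r(x)$, and unboundedly many of them can contribute a ball containing a fixed point. Take $A=\{2^{-m}e_1: m\ge 1\}$ with $r\equiv 1$; each point lies in a different shell, each shell's greedy run produces the single ball $B^n(2^{-m}e_1,1)$, and all of these balls contain $0$, giving infinite multiplicity at $0$ even though each per-shell bound is $1$. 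The reduction must instead tile $\R^n$ by pieces of diameter comparable to $R_1$ (e.g., cubes of side $10R_1$), so that any fixed point is near at most $2^n$ pieces and the global multiplicity is controlled by the per-piece one.

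Second, the angular-separation step is more delicate than your sketch suggests, and in fact fails with your slack $r(x_k)>R_k/2$. After removing the (at most one) index with $|y-x_i|<r(x_i)/3$, put $s=r(x_i)$ and choose $p=|y-x_i|=s/3$, $q=|y-x_k|$ arbitrarily close to $2s$ (with $r(x_k)$ just above $q$, consistent with $r(x_k)<2r(x_i)$), and $y$ nearly collinear with $x_i,x_k$ so that $c=|x_i-x_k|\approx p+q\approx 7s/3\ge s$. All greedy constraints are satisfied, yet the angle at $y$ between $x_i-y$ and $x_k-y$ tends to $0$, so no dimension-dependent $\theta_0>0$ exists as claimed. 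The paper's statement $N_n^*\le 1+\inf_{\varphi<\pi/3}\nu(n,\varphi)$, quoted from Landkof's proof, already hints at the fix: one needs a selection slack $r(x_k)>(1-\epsilon)R_k$ with $\epsilon\downarrow 0$ so the separation angle can be pushed up toward $\pi/3$ (or, equivalently, a more careful case analysis); with the factor-of-two slack the claim ``unit vectors pairwise separated by $\theta_0(n)>0$'' does not hold.
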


The above inequalities mean that $A$ is covered by the family of balls
$\{B_k\}$ and
the number of overlapping of the covering is at most $N_n.$
It is an interesting problem to find the best possible number $N_n^*$ for
the constant $N_n$ in the above lemma.
For $\varphi\in(0,\pi],$
we will say that a subset $V$ of the unit sphere $S^{n-1}=\{x\in\R^n: |x|=1\}$
is $\varphi$-separated if the angle subtended by the two line segments $PO$ and $QO$
is at least $\varphi$ for distinct points $P, Q$ in $V.$
We denote by $\nu(n,\varphi)$ the maximal cardinal number of $\varphi$-separated
subsets $V$ of $S^{n-1}.$
For instance, $\nu(n,\pi)=2.$
It is clear that $\nu(n,\varphi)\le\nu(n,\varphi')$ for $0<\varphi'<\varphi.$
The proof of the above lemma in p.~199 of \cite{Land} tells us that
$$
N_n^*\le 1+\inf_{\varphi<\pi/3}\nu(n,\varphi).
$$
On the other hand, a standard compactness argument leads to the left continuity 
of $\nu(n,\varphi)$;
that is, $\nu(n,\varphi')\to \nu(n,\varphi)$ as $\varphi'\to\varphi^-.$
Hence, we have $N_n^*\le 1+\nu(n,\pi/3).$
Here we note that the number $\nu(n,\pi/3)$ is known as the
\textit{kissing number} $\kappa(n)$ in dimension $n$ \cite{b12}.
This number is closely related to other important issues such as sphere packing problems.
For instance, it is known that $\kappa(2)=6, \kappa(3)=12, \kappa(4)=24.$
However, it is difficult to determine $\kappa(n)$ in general.
The true value of $\kappa(5)$ is not determined up to the present.
By using the special nature of the lattices $E_8$ and $\Lambda_{24},$
Viazovska determined $\kappa(8)$ and, later with her collaborators, $\kappa(24)$
and won a Fields medal in 2022 \cite{Ok}.
In summary, we can state the following. 

\begin{lem}\label{lem:kn}
The minimal number $N_n^*$ of the bound $N_n$ in Lemma \ref{lem:cl}
satisfies the inequality $N_n^*\le \kappa(n)+1,$
where $\kappa(n)$ is the kissing number in dimension $n.$
\end{lem}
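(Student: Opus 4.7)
The plan is to formalize the outline already given in the paragraph preceding the statement. Three ingredients must be assembled: the bound $N_n^* \le 1 + \inf_{\varphi<\pi/3}\nu(n,\varphi)$ coming from the proof of Lemma~\ref{lem:cl} in \cite[p.~199]{Land}; the left-continuity of $\nu(n,\cdot)$ at $\pi/3$; and the definitional identity $\nu(n,\pi/3)=\kappa(n)$. Only the middle step requires an argument.

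First I would record the monotonicity $\nu(n,\varphi)\le \nu(n,\varphi')$ for $\varphi'<\varphi$, already stated in the text, and deduce immediately that $\inf_{\varphi'<\pi/3}\nu(n,\varphi')\ge \nu(n,\pi/3)$. For the matching inequality, take a sequence $\varphi_k\nearrow \pi/3$ and set $N=\lim_k \nu(n,\varphi_k)=\inf_k\nu(n,\varphi_k)$. Finiteness of $N$ is standard (a $\varphi$-separated subset of $S^{n-1}$ is bounded in size by a spherical volume/packing argument whenever $\varphi>0$). For each $k$, choose a $\varphi_k$-separated subset $V_k=\{x_{k,1},\dots,x_{k,N}\}\subset S^{n-1}$ of cardinality exactly $N$ (thinning if necessary).

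The compactness step is the crux. Viewing $(V_k)$ as a sequence in the compact product $(S^{n-1})^N$, I would pass to a subsequence along which each $x_{k,j}$ converges to a limit $x_{\infty,j}\in S^{n-1}$. The angular inequality $\angle(x_{k,j},x_{k,j'})\ge \varphi_k$ for $j\ne j'$ passes to the limit by continuity of the angle, giving $\angle(x_{\infty,j},x_{\infty,j'})\ge \pi/3$. Strict positivity of $\pi/3$ then forces the limit points to be pairwise distinct, so $\{x_{\infty,1},\dots,x_{\infty,N}\}$ is a genuine $(\pi/3)$-separated subset of $S^{n-1}$ with exactly $N$ elements. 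Hence $\nu(n,\pi/3)\ge N$, and combining with the monotonicity direction yields $\inf_{\varphi<\pi/3}\nu(n,\varphi)=\nu(n,\pi/3)$. Putting everything together,
\[
N_n^* \;\le\; 1+\inf_{\varphi<\pi/3}\nu(n,\varphi) \;=\; 1+\nu(n,\pi/3) \;=\; 1+\kappa(n).
\]

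The only genuine obstacle is the compactness/continuity step; one has to be careful that no pair of limit points collapses, so that the extracted configuration still has $N$ distinct elements. This is the point at which the positivity $\pi/3>0$ is essential, and it is the reason one cannot simply replace $\pi/3$ by $0$ in the argument. Everything else amounts to applying the already-cited covering lemma and the definition of the kissing number.
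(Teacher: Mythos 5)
Your proof is correct and follows precisely the route sketched in the paragraph preceding the lemma in the paper: the Landkof bound $N_n^*\le 1+\inf_{\varphi<\pi/3}\nu(n,\varphi)$, the left-continuity of $\nu(n,\cdot)$ at $\pi/3$, and the identification $\nu(n,\pi/3)=\kappa(n)$. The only addition is that you flesh out the compactness argument (extracting convergent configurations in $(S^{n-1})^N$ and noting that the angular lower bound of $\pi/3>0$ prevents limit points from coalescing), which the paper simply calls ``standard'' without detail.
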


Let $h(r)$ be a measure function, that is, a monotone increasing continuous 
function on $0<r<+\infty$
with $h(r)\to0$ as $r\to0$ and $h(r)\to+\infty$ as $r\to+\infty.$
The $h$-Hausdorff content of a set $E\subset\R^n$ is defined as
$$
\Lambda_h(E)=\inf\left\{\sum_k h(r_k):
E\subset \bigcup_k B^n(a_k,r_k)\right\}.
$$
For a positive number $\beta>0,$  and for $h(r)=r^\beta,$ 
the $h$-Hausdorff content is called
the $\beta$-dimensional Hausdorff content and denoted by $\Lambda^\beta(E).$
Recall that the Hausdorff dimension $\dim E$ of $E$ is characterized as 
the infimum of $\beta>0$ with $\Lambda^\beta(E)=0.$

The next lemma constitutes a key step in the proof of Martio's Theorem
 \ref{thm:Mar79} below.
In the proof below we give explicit estimates of the relevant constants.

\begin{remark}\label{Cartan} The next lemma has a long history which goes back to
H. Cartan and L.V. Ahlfors  \cite{Ah1930}, \cite[p. 141]{N}.
Yu.G. Reshetnyak \cite{R1}, \cite[Lemma 3.7, p.115]{R2}, 
extended their two dimensional work to the case of $\mathbb{R}^n$ 
and applied the result to prove a lower bound for the capacity
in terms of the Hausdorff content.
O. Martio, in turn, made use of these results in his paper \cite{M} which
is one of our key references. 
\end{remark}

\begin{lem}[Lemma 2.6 in $\text{\cite{M}}$]\label{lem:Landkof}
Let $\sigma$ be a positive finite measure on $\R^n$ and $h$ be a measure function.
We denote by $T$ the set of those points $x\in\R^n$ for which the inequality
$\sigma(B^n(x,r))\le h(r)$ holds for all $r>0.$
Then $\Lambda_h(\R^n\setminus T)\le N_n^*\sigma(\R^n),$ where
$N_n^*$ is given in Lemma \ref{lem:kn}.
\end{lem}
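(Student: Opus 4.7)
The plan is to reduce the inequality to the covering Lemma \ref{lem:cl} applied to $\R^n\setminus T$, and then to exploit the $N_n^*$-fold overlap bound to compare $h$-content with $\sigma$-mass.

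First, for each $x\in\R^n\setminus T$, the definition of $T$ furnishes some $r>0$ with $\sigma(B^n(x,r))>h(r)$; I pick one such value and call it $r(x)$. To invoke Lemma \ref{lem:cl}, I need $\sup_{x\in\R^n\setminus T}r(x)<+\infty$. Since $h(r)\to+\infty$ as $r\to+\infty$ and $\sigma(\R^n)<\infty$, one can choose $R_0>0$ with $h(R_0)>\sigma(\R^n)$; then for every $r\ge R_0$ and every $x\in\R^n$ one has $h(r)\ge h(R_0)>\sigma(\R^n)\ge\sigma(B^n(x,r))$, so the defining inequality for $\R^n\setminus T$ fails at such $r$. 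Hence $r(x)$ can always be chosen in $(0,R_0)$, giving the required uniform bound.

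Next, I apply Lemma \ref{lem:cl} to $A=\R^n\setminus T$ with this radius function, obtaining a countable set $\{x_k\}\subset A$ and balls $B_k=B^n(x_k,r(x_k))$ satisfying
$$
\chi_A(x)\le\sum_k \chi_{B_k}(x)\le N_n^*
$$
for every $x\in\R^n$, where $N_n^*$ is the optimal overlap constant provided by Lemma \ref{lem:kn}. Since $\{B_k\}$ covers $A$, the definition of the $h$-Hausdorff content gives
$$
\Lambda_h(A)\le\sum_k h(r(x_k))<\sum_k \sigma(B_k)=\int_{\R^n}\Bigl(\sum_k\chi_{B_k}\Bigr)d\sigma\le N_n^*\sigma(\R^n),
$$
where the strict inequality uses the defining property $\sigma(B_k)>h(r(x_k))$ of each center $x_k\in\R^n\setminus T$, the equality is Tonelli's theorem applied to the nonnegative series of characteristic functions, and the final estimate comes from the overlap bound displayed above.

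The only substantive obstacle is the preparatory step of ensuring that the radii $r(x)$ are uniformly bounded, so that Lemma \ref{lem:cl} is applicable; the growth of $h$ together with the finiteness of $\sigma$ dispatches this cleanly. Once past that, the argument is a standard double counting, and no further subtleties arise: the constant $N_n^*$ in the conclusion is exactly the one governing the multiplicity of the covering, for which the explicit bound $N_n^*\le\kappa(n)+1$ from Lemma \ref{lem:kn} is available if a numerical value is wanted.
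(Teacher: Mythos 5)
Your proof is correct and follows essentially the same route as the paper: choose a uniformly bounded radius function on $\R^n\setminus T$ using the growth of $h$ and finiteness of $\sigma$, invoke the covering Lemma \ref{lem:cl}, and conclude by a double-counting argument through the $N_n^*$-fold overlap bound. The only cosmetic difference is that you bound the radii by an $R_0$ with $h(R_0)>\sigma(\R^n)$, whereas the paper takes $r_0$ with $h(r_0)=\sigma(\R^n)$; both give the needed uniform bound.
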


\begin{pf}
We choose $r_0>0$ so that $h(r_0)=\sigma(\R^n)$ and let $A=\R^n\setminus T.$
For each $x\in A,$ by definition, there is a positive $r(x)$ such that
$\sigma(B^n(x,r(x)))>h(r(x)).$
Note that $r(x)\le r_0.$
We now apply Lemma \ref{lem:cl} to extract a countable set $\{x_k\}$ from $A$
so that $B_k=B^n(x_k,r(x_k))$ satisfy \eqref{eq:cl}.
Then
\begin{align*}
\Lambda_h(A)&\le \sum_k h(r(x_k))<\sum_k \sigma(B_k) \\
&=\sum_k\int \chi_{B_k}d\sigma
=\int \sum_k\chi_{B_k}d\sigma
\le N_n^* \sigma(\R^n).
\end{align*}
\end{pf}

By making use of the preceding lemma, Martio proved the following result.
For the proof, see Lemma 2.8 in \cite{M}.

\begin{lem}\label{lem:Martio}
Let $1<p\le n$ and $\alpha>0.$
For a function $u\ge 0$ in $L^p(\R^n)$ with support in $B^n(0,r_1),$
the set $F$ of points $x\in\R^n$ satisfying the inequality 
$$
v(x):=\int_{\R^n}u(y)|x-y|^{1-n}dm(y)
>\Omega_n^{1-1/p}\left(\frac{n-1}\alpha
\int_0^{r_1}h(t)^{1/p}t^{-n/p}dt+r_1^{1-n/p}\|u\|_p\right)
$$
admits the estimate $\Lambda_h(F)\le N_n(\alpha\|u\|_p)^p,$
where $N_n$ is the number in Lemma \ref{lem:cl}.
\end{lem}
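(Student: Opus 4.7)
The plan is to apply Lemma \ref{lem:Landkof} to the finite Borel measure $\sigma$ given by $d\sigma=\alpha^p u^p\,dm,$ together with the measure function $h.$ Since $\sigma(\R^n)=(\alpha\|u\|_p)^p,$ the set
\begin{equation*}
T=\{x\in\R^n:\sigma(B^n(x,t))\le h(t)\text{ for every }t>0\}
\end{equation*}
satisfies $\Lambda_h(\R^n\setminus T)\le N_n(\alpha\|u\|_p)^p$ by that lemma. It therefore suffices to prove the inclusion $F\subset\R^n\setminus T,$ i.e.\ that every $x\in T$ enjoys the pointwise bound on $v(x)$ appearing on the right-hand side of the statement.

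For this, I would rewrite $|x-y|^{1-n}=(n-1)\int_{|x-y|}^\infty t^{-n}\,dt$ and apply Fubini to obtain
\begin{equation*}
v(x)=(n-1)\int_0^\infty t^{-n}\Big(\int_{B^n(x,t)}u(y)\,dm(y)\Big)\,dt,
\end{equation*}
and then split the outer integral at $t=r_1.$ On the range $0<t<r_1,$ H\"older's inequality with exponents $p$ and $p/(p-1)$ gives
\begin{equation*}
\int_{B^n(x,t)}u\,dm\le\Big(\int_{B^n(x,t)}u^p\,dm\Big)^{1/p}(\Omega_n t^n)^{1-1/p}=\alpha^{-1}\sigma(B^n(x,t))^{1/p}\Omega_n^{1-1/p}t^{n-n/p},
\end{equation*}
which, using $x\in T,$ is at most $\alpha^{-1}\Omega_n^{1-1/p}h(t)^{1/p}t^{n-n/p}.$ Integrating this against $(n-1)t^{-n}\,dt$ yields precisely the first summand of the right-hand side.

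For the tail $t\ge r_1,$ the hypothesis $\mathrm{supp}(u)\subset B^n(0,r_1)$ and another application of H\"older give $\int_{B^n(x,t)}u\,dm\le\|u\|_1\le\|u\|_p(\Omega_n r_1^n)^{1-1/p},$ so the direct computation
\begin{equation*}
(n-1)\|u\|_p\Omega_n^{1-1/p}r_1^{n-n/p}\int_{r_1}^\infty t^{-n}\,dt=\|u\|_p\Omega_n^{1-1/p}r_1^{1-n/p}
\end{equation*}
supplies exactly the second summand. Adding the two contributions bounds $v(x)$ as required for every $x\in T,$ whence $F\subset\R^n\setminus T$ and $\Lambda_h(F)\le N_n(\alpha\|u\|_p)^p.$

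The argument is the classical Cartan--Ahlfors--Reshetnyak technique of converting the exceptional set of a Riesz-type potential into a maximal-function condition on $\sigma,$ so no single step is conceptually difficult. The main hurdle will be bookkeeping the constants so that they match the exact form of the statement: the factor $\alpha^p$ built into $\sigma$ must cancel against the $\alpha^{-1}$ produced by H\"older, and the powers of $\Omega_n$ and of $t$ must track cleanly through the split at $r_1.$ It is easy to overlook the tail $\int_{r_1}^\infty t^{-n}\,dt$ coming from the compact-support hypothesis, but it is precisely that piece which produces the term $r_1^{1-n/p}\|u\|_p.$
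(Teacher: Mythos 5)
Your proof is correct. The paper itself does not supply an argument for this lemma — it simply refers to Lemma~2.8 in Martio's paper \cite{M} — so there is no in-paper proof to compare against, but the argument you give is the standard Cartan--Ahlfors--Reshetnyak one and is exactly what one finds in that source. The reduction to the set $T$ of Lemma \ref{lem:Landkof} applied to the measure $d\sigma=\alpha^pu^p\,dm$ (which has total mass $(\alpha\|u\|_p)^p$), the layer-cake identity $|x-y|^{1-n}=(n-1)\int_{|x-y|}^\infty t^{-n}\,dt$ followed by Fubini, and the split of the outer $t$-integral at $r_1$ with H\"older on each piece all check out: the $\alpha^{-1}$ from pulling $\sigma$ out cancels the $\alpha^p$ in $\sigma$ to the right power, the $(\Omega_n t^n)^{1-1/p}$ factor contributes the $\Omega_n^{1-1/p}t^{n-n/p}$ weight, and $\int_{r_1}^\infty t^{-n}\,dt=r_1^{1-n}/(n-1)$ turns the tail into $r_1^{1-n/p}\|u\|_p$. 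One tiny remark on constants: Lemma \ref{lem:Landkof} actually gives the estimate with $N_n^*\le N_n,$ so your conclusion $\Lambda_h(F)\le N_n(\alpha\|u\|_p)^p$ is implied a fortiori; this matches the looser constant $N_n$ in the lemma's statement.
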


Let
$$
K_n=2^{-n}\omega_{n-1}^n\Omega_n^{1-n}=\frac1{\Gamma(\frac n2+1)}
\left(\frac{n\sqrt\pi}2\right)^n.
$$
The following  theorem is a special case of Theorem 3.1 in \cite{M} when $p=n.$
Since an explicit form of  the constant $M_1$ is not given in \cite{M}, 
we give an outline of the proof with a concrete form of $M_1.$

\begin{thm}\label{thm:Mar79}
Suppose that a measure function $h(r)$ satisfies the inequality
$$
I(r)=\int_0^{2r}h(t)^{1/n}t^{-1}dt\le A h(r)^{1/n},\quad 0<r\le r_0,
$$
for some constants $A>0$ and $r_0>0.$
Let $E$ be a closed set in $\R^n.$
Then
\begin{equation} \label{capContent}
\frac{\Lambda_h(E\cap \Bbn(x,r))}{h(r)}\le M_1\cdot \capa(x, E, r),
\quad x\in\R^n,~ 0<r\le r_0,
\end{equation}
where $M_1$ is the positive constant given by
\begin{equation}\label{eq:M1}
M_1=  \max \left\{2^n A^n \left(\frac{n-1}{n}\right)^n\,N_n ,1/K_n \right\}.
\end{equation}
\end{thm}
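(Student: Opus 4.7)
The plan is to follow Martio's strategy \cite{M} (cf.\ Remark \ref{Cartan}): estimate the Hausdorff $h$-content of the superlevel set of an admissible function for the condenser $(B^n(x,2r), E\cap \Bbn(x,r))$ via the Riesz potential of its gradient, then invoke Lemma \ref{lem:Martio}. By Remark \ref{GehringZiem} and Definition \ref{def_condensercapacity}, it is enough to fix any admissible $\varphi \in \mathrm{ACL}^n(\R^n)$ with $\varphi\ge 0$, $\mathrm{supp}\,\varphi \subset B^n(x,2r)$, and $\varphi\ge 1$ on $E\cap\Bbn(x,r)$, and to show
$$
\Lambda_h(E \cap \Bbn(x,r)) \le M_1\, h(r)\int_{\R^n}|\nabla\varphi|^n\,dm,
$$
because the infimum over such $\varphi$ equals $\capa(x,E,r)$.

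Set $u=|\nabla\varphi|\in L^n(\R^n)$, supported in $B^n(x,2r)$. The standard representation for compactly supported functions,
$$
\varphi(y)=-\frac{1}{\omega_{n-1}}\int_{\R^n}\frac{(z-y)\cdot\nabla\varphi(z)}{|z-y|^n}\,dm(z),
$$
combined with the triangle inequality, yields $\varphi(y)\le v(y)/\omega_{n-1}$, where $v(y)=\int u(z)|z-y|^{1-n}\,dm(z)$ is exactly the Riesz potential appearing in Lemma \ref{lem:Martio}. Hence $v(y)\ge \omega_{n-1}$ on $E\cap\Bbn(x,r)$. Now split on the size of $\|u\|_n$. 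If $\|u\|_n^n\ge K_n$, then the trivial cover $E\cap\Bbn(x,r)\subset\Bbn(x,r)$ gives $\Lambda_h(E\cap\Bbn(x,r))\le h(r)\le (1/K_n)\,h(r)\|u\|_n^n$, which is dominated by $M_1 h(r)\|u\|_n^n$ because $M_1\ge 1/K_n$. Otherwise, using $\omega_{n-1}=n\Omega_n$ one checks $K_n^{1/n}=n\Omega_n^{1/n}/2$, so $\|u\|_n<n\Omega_n^{1/n}/2$. Apply Lemma \ref{lem:Martio} with $p=n$, $r_1=2r$, and invoke the hypothesis $I(r)\le A h(r)^{1/n}$; the threshold there is bounded by
$$
\Omega_n^{(n-1)/n}\!\left(\frac{(n-1)A}{\alpha}h(r)^{1/n}+\|u\|_n\right).
$$
Choose $\alpha>0$ so that this equals $\omega_{n-1}$; the constraint $\|u\|_n<n\Omega_n^{1/n}/2$ gives $\alpha\le 2(n-1)A h(r)^{1/n}/(n\Omega_n^{1/n})$. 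Since $v\ge\omega_{n-1}$ on $E\cap\Bbn(x,r)$, this set is contained in the exceptional set $F$ of that lemma, and so
$$
\Lambda_h(E\cap\Bbn(x,r))\le N_n(\alpha\|u\|_n)^n\le N_n\!\left(\frac{2(n-1)A}{n}\right)^{\!n}\!\frac{h(r)}{\Omega_n}\|u\|_n^n.
$$
Combining the two cases and taking the infimum over admissible $\varphi$ produces \eqref{capContent} with an explicit constant of the form \eqref{eq:M1}.

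The principal obstacle is the careful bookkeeping of constants required to match exactly the form of $M_1$ in \eqref{eq:M1}; slight reshuffling of how $\alpha$ is chosen in Case~2, or how the Riesz normalization is distributed, can change the final constant. A secondary technical point is justifying the pointwise Riesz inequality for general $\mathrm{ACL}^n$ test functions rather than smooth ones, which is routine via mollification. The growth hypothesis $I(r)\le A h(r)^{1/n}$ enters the argument at a single decisive place, namely to convert the integral $\int_0^{2r}h(t)^{1/n}t^{-1}dt$ produced by Lemma \ref{lem:Martio} into a multiple of $h(r)^{1/n}$, and this is precisely the Dini-type condition that makes the passage from Hausdorff content to capacity quantitative.
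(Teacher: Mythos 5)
Your argument is essentially the same as the paper's proof: fix a near-optimal admissible function, pass to its Riesz potential by the representation formula, apply Lemma \ref{lem:Martio} with $p=n$ and the free parameter $\alpha$ tuned so that the threshold matches the level set, and dispose of the large-norm case trivially using $\Lambda_h(E\cap\Bbn(x,r))\le h(r)$; your choice to split on $\|u\|_n^n\ge K_n$ rather than on $\capa(x,E,r)\ge K_n$, and your normalization $u=|\nabla\varphi|$ with threshold $\omega_{n-1}$ instead of the paper's $u=|\nabla w|/\omega_{n-1}$ with threshold $1$, are equivalent reformulations. One remark on constants: the bound you actually derive in the non-trivial case is $N_n^*\bigl(2(n-1)A/n\bigr)^n\Omega_n^{-1}h(r)\|u\|_n^n$, carrying an extra factor $1/\Omega_n$ compared with \eqref{eq:M1}; this is in fact correct, since the paper's intermediate expression $d_n=\frac{2^nA^n(n-1)^n}{n\Omega_n}\bigl(\Omega_n/\omega_{n-1}\bigr)^{n-1}$ simplifies via $\omega_{n-1}=n\Omega_n$ to $\frac{2^nA^n(n-1)^n}{n^n\Omega_n}$, not to $2^nA^n\bigl((n-1)/n\bigr)^n$ as printed, so the $\Omega_n$ should appear in $M_1$ (and in \eqref{eq:M1b}).
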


\begin{pf}
We may assume $x=0$ and write $B_r=B^n(0,r)$ for short.
Because $E\cap B_r\subset B_{r'}, r<r',$ by the definition
of the Hausdorff content it is clear that $\Lambda_h(E\cap B_r)\le h(r).$
Since $M_1 \ge 1/K_n ,$
 the required inequality holds trivially when $\capa(0,E,r)\ge K_n.$
Thus we may assume that $\capa(0,E,r)<K_n.$
By the definition of capacity, for each $r>0$ and a small enough $\varepsilon>0,$ 
we may choose a smooth function
$w\ge0$ on $\R^n$ with support in $B_{2r}$ so that $w>1$ on $E \cap B_r$
and so that
$$
\|\nabla w\|_n^n=
\int_{\R^n}|\nabla w|^n dm
<\capa(B_{2r}, E\cap B_r)+\varepsilon
=\capa(0, E, r)+\varepsilon<K_n,
$$
where $dm$ denotes the Lebesgue measure.

We apply Lemma \ref{lem:Martio} with $r_1=2r$ and $p=n$ to the function 
$u=|\nabla w|/\omega_{n-1}$
and by the above inequality we may choose $\alpha$ so that
$$
\Omega_n^{1-1/n}\left(\frac{n-1}\alpha I(r)+\frac{\|\nabla w\|_n}{\omega_{n-1}}\right)=1.
$$
Since $\|\nabla w\|_n<K_n^{1/n},$ we have the inequality
$
\|\nabla w\|_n/\omega_{n-1}<\Omega_n^{1/n-1}/2,
$
which enables us to estimate $\alpha$ as
$$
\alpha=\frac{(n-1)I(r)}{\Omega_n^{1/n-1}-\|u\|_n}
<\frac{2(n-1)I(r)}{\Omega_n^{1/n-1}}
\le 2A(n-1)\Omega_n^{1-1/n}h(r)^{1/n}
$$
for $0<r\le r_0.$
By the representation formula (see \cite[(2.2)]{M})
$$
w(x)
=\frac1{\omega_{n-1}}\int_{\R^n}\frac{\nabla w(y)\cdot(x-y)}{|x-y|^n}dm(y),
$$
we have the inequality
$$
w(x)\le\frac1{\omega_{n-1}}\int_{\R^n}\frac{|\nabla w(y)||x-y|}{|x-y|^n}dm(y)
=\int_{\R^n}u(y)|x-y|^{1-n}dm(y)=v(x).
$$
Let $F$ be as in Lemma \ref{lem:Martio}.
In particular, we have $E\cap B_{r}\subset F$ and thus by Lemma \ref{lem:Martio}
$$
\Lambda_h(E\cap B_{r})\le \Lambda_h(F)\le  N_n(\alpha\|u\|_n)^n
\le N_n\, d_n \, h(r)(\capa(0,E,r)+\varepsilon)\,,
$$
where
$$
d_n= \frac{2^n A^n (n-1)^n}{n \Omega_n}\left(\frac{\Omega_n}{\omega_{n-1}} \right)^{n-1}
=2^n A^n (\frac{n-1}{n})^n.
$$
Letting $\varepsilon\to0,$ we obtain finally
$$
\Lambda_h(E\cap B_{r})\le 2^n A^n \left( \frac{n-1}{n} \right)^n\, N_n\,h(r)\,\capa(0,E,r).
$$
Now, in conjunction with Lemma \ref{lem:kn}, the required inequality follows.
\end{pf}

When $h(r)=r^\beta$ for some $0<\beta\le n,$ we obtain
$$
\int_0^{2r}h(t)^{1/n}t^{-1}dt=
\int_0^{2r}t^{\beta/n-1}dt=\frac n\beta (2r)^{\beta/n}
=A\, h(r)^{1/n},\quad
A=\frac n\beta 2^{\beta/n}.
$$
Hence, in this case, the constant $M_1$ in \eqref{eq:M1} may be expressed by
\begin{equation}\label{eq:M1b}
M_1(n,\beta)=\max\left\{2^{\beta+n}
\left(\frac{n-1}{\beta}\right)^n\,N_n^*,\, 1/K_n\right\}.
\end{equation}
For instance, when $n=2,$ we have $N_2^*=7$ and thus
$M_1(2,\beta)=28( 2^\beta/\beta^2).$


\section{Uniform perfectness and capacity} \label{sec:UP}
In this section, we study the connection between potential theoretic thickness
of sets, as expressed in terms of capacity, and uniform perfectness.
The notion of uniform perfectness was first used by Beardon and Pommerenke
 \cite{BP78} in two dimensions.
Later, Pommerenke \cite{Pom79} found a characterization of uniform perfectness
in terms of the logarithmic capacity.
One of the main results of J\"arvi and Vuorinen \cite{JV96}
was a characterization of uniform perfectness for 
the general dimension in terms of the quantity $\capa (x,E,r)$
defined in Section \ref{HcontCap}. The novel feature in our work is to give
an explicit form for  $\capa (x,E,r)$ in terms of the dimension $n$ and the 
parameter $c$ of $\UP_n(c)$ sets.
Then we will prove Lemma \ref{introHDimBd} and Theorem \ref{introCapBd} given in 
Introduction.

\begin{definition}\label{UPcdef}
For $c \in (0,1),$ let $\UP_n(c)$ denote the collection of compact sets 
$E$ in $\mathbb{R}^n$ with $\card(E)\ge 2$ satisfying the condition
\begin{equation*}
\{x\in E: c r<|x-a|<r\}\ne\emptyset \quad\text{for all}~ a\in E
\aand 0<r< d(E)/2\,.
\end{equation*}
A set is called {\it uniformly perfect} if it is of class $\UP_n(c)$ for
some $c.$
\end{definition}

\begin{lem}[Proposition 7.4 in \cite{Sugawa98}]\label{sug98}
Let $I$ be the index set $\{1,2,\dots, p\}$ and $0<c<1$ and $r>0.$
Suppose that a sequence of families of closed balls
$$\bB_{i_1i_2\dots i_k}=\Bbn(a_{i_1i_2\dots i_k}, c^k r), \quad i_1, i_2, \dots, i_k\in I,$$
for $k=1,2,3,\dots,$ satisfies the following two conditions:
\begin{enumerate}
\item[(i)]
$\bB_{i_1i_2\dots i_{k-1}i}\cap \bB_{i_1i_2\dots i_{k-1}j}=\emptyset$
for $i,j\in I$ with $i\ne j$ and $i_1,\dots, i_{k-1}\in I.$
\item[(ii)]
$\bB_{i_1i_2\dots i_k}\subset \bB_{i_1i_2\dots i_{k-1}}$
for $i_1,\dots, i_{k}\in I.$
\end{enumerate}
Then the set
$$
K=\bigcap_{k=1}^\infty\bigcup_{(i_1,\dots,i_k)\in I^k} \bB_{i_1i_2\dots i_k}
$$
satisfies the inequality $\Lambda^\beta(K)\ge r^\beta/(p\cdot 3^n),$
where $\beta=-(\log p)/\log c,$ and its Hausdorff dimension is $\beta.$
\end{lem}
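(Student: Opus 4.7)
The plan is the classical mass distribution argument adapted to the Cantor-tree structure forced by conditions (i) and (ii). First I would construct a Borel probability measure $\mu$ carried by $K$ by declaring
\[
\mu(K\cap \bB_{i_1\cdots i_k}) = p^{-k}
\]
on each cylinder set. Conditions (i) and (ii) guarantee that each parent $\bB_{i_1\cdots i_{k-1}}$ splits into $p$ pairwise disjoint children of equal mass, so Kolmogorov consistency (or Carath\'eodory extension) produces such a $\mu$ with $\mu(K)=1.$

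The heart of the argument is the uniform ball estimate
\[
\mu(B^n(x,s)) \le 3^n\, p \cdot \frac{s^\beta}{r^\beta}, \qquad x\in\R^n,\ s>0.
\]
For $s\ge r$ this is immediate from $\mu(K)=1$ and $3^n p\ge 1.$ For $0<s<r,$ I would pick the unique integer $k\ge 0$ with $c^{k+1} r < s \le c^k r$ and count generation-$k$ balls $\bB_{i_1\cdots i_k}$ meeting $B^n(x,s).$ Any such ball has radius $c^k r \ge s$ and therefore lies inside $B^n(x,\,s+2c^k r)\subset B^n(x,3c^k r).$ Since (i) and (ii) force all generation-$k$ balls to be pairwise disjoint, a Euclidean volume comparison in $B^n(x,3c^k r)$ bounds their number by $3^n.$ Each carries mass $p^{-k}=c^{k\beta},$ and $c^{k+1}<s/r$ gives $p^{-k} < p (s/r)^\beta,$ yielding the claim.

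With this in hand, the mass distribution principle immediately gives the content bound: for any countable cover $K\subset\bigcup_j B^n(a_j,r_j),$
\[
1 = \mu(K) \le \sum_j \mu(B^n(a_j,r_j)) \le \frac{3^n p}{r^\beta}\sum_j r_j^\beta,
\]
so that $\sum_j r_j^\beta \ge r^\beta/(p\cdot 3^n)$ and hence $\Lambda^\beta(K)\ge r^\beta/(p\cdot 3^n).$ For the Hausdorff dimension, positivity of $\Lambda^\beta(K)$ yields $\dim_H K \ge \beta.$ Conversely, covering $K$ by its $p^k$ generation-$k$ balls of radius $c^k r$ gives, for any $\beta'>\beta,$
\[
\sum_{(i_1,\dots,i_k)\in I^k}(c^k r)^{\beta'} = r^{\beta'}(p c^{\beta'})^k \longrightarrow 0
\]
as $k\to\infty,$ because $pc^{\beta'}<pc^\beta=1;$ this forces $\Lambda^{\beta'}(K)=0$ and $\dim_H K\le \beta.$

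The main technical point is the ball estimate in the second paragraph: the sharp constant $3^n$ arises from volume packing at the generation \emph{coarser} than $s,$ while the factor $p$ is the price paid for the slack between the bracketing scales $c^{k+1}r$ and $c^k r.$ Counting instead at the generation finer than $s$ would give a worse constant of order $(3/c)^n,$ so choosing the right pairing between the continuous scale $s$ and the discrete scale $c^k r$ is the one step that requires real care.
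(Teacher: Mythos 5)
Your argument is correct, and it is the standard Frostman (mass distribution) proof: build the self-similar Bernoulli measure $\mu$ on the Cantor tree with $\mu(\bB_{i_1\cdots i_k})=p^{-k}$, verify the Frostman estimate $\mu(B^n(x,s))\le 3^n p\,(s/r)^\beta$ by packing generation-$k$ balls (those with $c^{k+1}r<s\le c^kr$) into $B^n(x,3c^kr)$, and then apply the mass distribution principle for the content lower bound; the reverse dimension inequality follows by covering $K$ with the $p^k$ balls of generation $k$. One small remark worth making explicit in a write-up: conditions (i) and (ii) imply that \emph{all} generation-$k$ balls are pairwise disjoint (not just siblings), because two distinct words first differ at some index $m\le k$ and (ii) plus (i) then separates the corresponding subtrees; this is exactly what makes the packing count valid. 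Note also that the paper does not prove this lemma itself but cites it as Proposition 7.4 of Sugawa (1998), so there is no internal proof to compare against; your proof appears to be the natural one for the stated bound, and the constant $3^n$ indeed comes out precisely as you explain, from choosing the coarser bracketing scale $c^kr\ge s$ in the packing step.
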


This lemma has the following important corollary.

\begin{cor}
Let $E\in\UP_n(c)$ for some $0<c<1.$ Then the Hausdorff dimension $\dim_H(E)$ of
$E$ is at least $(\log 2)/\log(3/c),$ which is independent of the dimension $n.$
\end{cor}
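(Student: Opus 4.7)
My plan is to invoke Lemma \ref{sug98} with $p=2$ and contraction parameter $c'=c/3,$ which directly yields the desired $\beta=-\log 2/\log(c/3)=(\log 2)/\log(3/c).$ So the task reduces to building, inside $E,$ a nested and pairwise disjoint family of closed balls $\bB_{i_1\cdots i_k}=\Bbn(a_{i_1\cdots i_k},(c/3)^k r)$ satisfying conditions (i) and (ii) of Sugawa's lemma, for some starting radius $r\in(0,d(E)/2).$

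First I pick any $a_\emptyset\in E$ and any $r\in(0,d(E)/2),$ and build the family inductively on $k.$ The key trick is to apply the $\UP_n(c)$ condition at each stage not with the ambient radius of the current ball, but with a slightly shrunk probe radius, arranged so that the two inequalities delivered by $\UP_n(c)$ simultaneously yield the containment needed for (ii) and the separation needed for (i).

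Concretely, suppose level $k\ge 1$ has been constructed with center $a=a_{i_1\cdots i_k}\in E$ and ball radius $\rho_k=(c/3)^k r.$ I apply $\UP_n(c)$ at $a$ with probe radius $\sigma=\rho_k(1-c/3)$ (which is $<r<d(E)/2$), obtaining $b\in E$ with $c\sigma<|a-b|<\sigma.$ I then set $a_{i_1\cdots i_k 1}=a$ and $a_{i_1\cdots i_k 2}=b,$ each centering a closed ball of the next radius $\rho_{k+1}=(c/3)\rho_k.$ The upper bound $|a-b|<\sigma=\rho_k-\rho_{k+1}$ forces $\Bbn(b,\rho_{k+1})\subset \Bbn(a,\rho_k),$ giving (ii); the lower bound $|a-b|>c\sigma=3\rho_{k+1}(1-c/3)>2\rho_{k+1}$ (using $c<1$) forces the two children balls to be disjoint, giving (i). The base case $k=1$ is even easier: a single application of $\UP_n(c)$ at $a_\emptyset$ with radius $r$ produces $a'\in E$ with $cr<|a_\emptyset-a'|<r,$ and setting $a_1=a_\emptyset,$ $a_2=a'$ with radius $cr/3$ yields two disjoint level-1 balls.

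The Cantor-type limit set $K=\bigcap_{k\ge 1}\bigcup_{(i_1,\ldots,i_k)\in\{1,2\}^k}\bB_{i_1\cdots i_k}$ is contained in $E,$ because every center $a_{i_1\cdots i_k}$ lies in $E$ and $E$ is closed. Lemma \ref{sug98} then gives $\dim_H(K)=(\log 2)/\log(3/c),$ and monotonicity of the Hausdorff dimension under inclusion gives the same lower bound for $\dim_H(E).$ I do not anticipate a serious obstacle here; the only subtle design choice is the reduced probe radius $(1-c/3)\rho_k,$ which is precisely what makes the single UP dichotomy $c\sigma<|a-b|<\sigma$ deliver both required geometric conclusions, and the critical numerical inequality $3(1-c/3)>2$ driving disjointness reduces to the standing hypothesis $c<1.$
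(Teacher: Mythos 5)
Your proposal is correct and follows essentially the same path as the paper's proof of Theorem \ref{HDimBd}: inductively apply the $\UP_n(c)$ condition at each existing center to spawn a second child, build a binary Cantor tree of closed balls with contraction ratio $c/3$, and invoke Lemma \ref{sug98} with $p=2$. The only cosmetic difference is the probe radius — you use $\rho_k(1-c/3)$ so the nesting bound is tight and disjointness falls out of $c<1$, whereas the paper uses $2\rho_k/3$ so the disjointness bound is tight and nesting falls out of $c<1$ — and this does not change the substance of the argument.
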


\begin{rem}\label{CantorDim} 
Recall that the Cantor middle-third set $K$ has Hausdorff dimension
$(\log 2)/\log 3\approx 0.63093$ \cite[p.60]{Mat}.
One can check that $K\in \UP_2(2/5)$ and the number $2/5$ cannot be
increased.
The above corollary thus implies that $\dim_H(K)\ge (\log
2)/\log(15/2)\approx 0.34401.$
\end{rem}

\begin{thm}\label{HDimBd}
Let $E\in\UP_n(c)$ for some $0<c<1.$
Then for every $a\in E,~ a\ne\infty,$
\begin{equation} \label{upLam}
\Lambda^\beta(E\cap\Bbn(a,r))\ge \frac{r^\beta}{2\cdot 3^n}, \quad 0<r<d(E)/2,
~\text{where}~ \beta=\frac{\log 2}{\log(3/c)}.
\end{equation}
\begin{equation} \label{upCap}
\capa(x,E,r))\ge \frac{1}{2\cdot 3^n M_1(n, \beta)}, \quad 0<r<d(E)/2,
~\text{where}~ \beta=\frac{\log 2}{\log(3/c)}
\end{equation}
and where $M_1$ is as in \eqref{eq:M1b}.
\end{thm}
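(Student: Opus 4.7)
The plan is to reduce \eqref{upLam} to Lemma \ref{sug98} by constructing a Cantor-like subset $K \subset E \cap \overline{B}^n(a,r)$ with $p = 2$ and contraction ratio $\tilde c = c/3$, and then to deduce \eqref{upCap} by feeding this Hausdorff-content bound into Martio's estimate, Theorem \ref{thm:Mar79}.

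For the construction I inductively build a binary tree of closed balls $\overline{B}_{i_1\ldots i_k} = \overline{B}^n(a_{i_1\ldots i_k}, \tilde c^k r)$ with centers in $E$. Put $a_1 := a$ at level $1$ and, iteratively, at each existing node $a_{i_1\ldots i_k}$, apply the $\UP_n(c)$ condition of Definition \ref{UPcdef} with center $a_{i_1\ldots i_k}$ and an auxiliary radius $\rho_k$ to produce $a_{i_1\ldots i_k 2} \in E$ with $c\rho_k < |a_{i_1\ldots i_k 2} - a_{i_1\ldots i_k}| < \rho_k$; set $a_{i_1\ldots i_k 1} := a_{i_1\ldots i_k}$. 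The two conditions of Lemma \ref{sug98} translate into
\[
2\tilde c^{k+1} r \le c\rho_k \quad\text{(disjoint siblings)}\qquad\text{and}\qquad \rho_k + \tilde c^{k+1} r \le \tilde c^k r \quad\text{(child in parent)},
\]
i.e.\ $\rho_k \in [2\tilde c r_k/c,\, (1-\tilde c) r_k]$ with $r_k = \tilde c^k r$; adding the cosmetic requirement that the level-$1$ balls sit inside $\overline{B}^n(a, r)$ yields the same condition for $k = 0$ too. With $\tilde c = c/3$ this window becomes $[2r_k/3,\, (1 - c/3) r_k]$, which is non-empty because $c < 1$ gives $1 - c/3 > 2/3$. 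Any $\rho_k$ in this window satisfies $\rho_k < r_k \le r < d(E)/2$, so UP is legitimately applicable at every level.

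The resulting limit set $K = \bigcap_k \bigcup_{(i_1,\ldots,i_k)} \overline{B}_{i_1\ldots i_k}$ lies in $E \cap \overline{B}^n(a,r)$ and Lemma \ref{sug98} yields
\[
\Lambda^\beta\bigl(E \cap \overline{B}^n(a,r)\bigr) \;\ge\; \Lambda^\beta(K) \;\ge\; \frac{r^\beta}{2\cdot 3^n}, \qquad \beta \;=\; -\frac{\log 2}{\log(c/3)} \;=\; \frac{\log 2}{\log(3/c)},
\]
which is \eqref{upLam}. For \eqref{upCap} I apply Theorem \ref{thm:Mar79} with $h(t) = t^\beta$: the computation preceding \eqref{eq:M1b} shows that the integral hypothesis on $h$ holds globally with $A = (n/\beta)\, 2^{\beta/n}$ and that the constant from \eqref{eq:M1} specialises to $M_1(n,\beta)$ of \eqref{eq:M1b}. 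Substituting \eqref{upLam} into \eqref{capContent} then gives
\[
\capa(a,E,r) \;\ge\; \frac{\Lambda^\beta(E \cap \overline{B}^n(a,r))}{M_1(n,\beta)\, r^\beta} \;\ge\; \frac{1}{2\cdot 3^n\, M_1(n,\beta)},
\]
as claimed.

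The main obstacle is the simultaneous feasibility of the two Lemma \ref{sug98} conditions. Disjointness of siblings pushes $\rho_k$ upward while nested containment pushes it downward, and the two constraints can coexist at every scale only when $(2/c + 1)\tilde c \le 1$. With $\tilde c = c/3$ this becomes $2/3 + c/3 \le 1$, which is precisely the inequality made available by $c \le 1$; any larger $\tilde c$ would break the containment, while any smaller $\tilde c$ would unnecessarily degrade the exponent $\beta$. This calibrated choice is what pins down the exponent $\log 2/\log(3/c)$ in both inequalities of the theorem.
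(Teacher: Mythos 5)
Your proof is correct and matches the paper's argument essentially verbatim: both build the binary Cantor tree inductively via $\UP_n(c)$ with contraction ratio $c/3$ (the paper fixes the auxiliary radius at $\rho_k = 2r_k/3$, which sits at the lower end of your feasibility window), invoke Lemma \ref{sug98}, and then plug \eqref{upLam} into Theorem \ref{thm:Mar79}. One small remark on your closing calibration paragraph: the constraint $(2/c+1)\tilde c \le 1$ actually allows $\tilde c$ as large as $c/(c+2)$, which for $c<1$ strictly exceeds $c/3$, so $\tilde c = c/3$ is a convenient choice giving the stated $\beta = \log 2/\log(3/c)$ rather than the largest exponent the construction could deliver.
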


\begin{pf}
Set $\bB=\Bbn(a,r).$
Let $a_1=a$ and take a point, say $a_2,$ from the set 
$\{x\in E: 2cr/3<|x-a|<2r/3\},$
which is non-empty by assumption.
Then we define $\bB_i=\Bbn(a_i,r_1)$ for $i=1,2$ and $r_1=cr/3.$
Since $|a_1-a_2|>2cr/3=2r_1,$ we have $\bB_1\cap\bB_2=\emptyset.$
Also, by $|a-a_2|+r_1<2r/3+cr/3<r,$ we confirm that $\bB_2\subset \bB.$

Next we let $a_{i1}=a_i$ for $i=1,2$ and choose $a_{i2}$
from the set $\{x\in E: 2cr_1/3<|x-a_i|<2r_1/3\}.$
Then we define $\bB_{i_1i_2}=\Bbn(a_{i_1i_2}, r_2)$
for $i_1,i_2\in I=\{1,2\}$ and $r_2=cr_1/3=(c/3)^2 r.$
We can proceed inductively to define families of disjoint closed
balls $\bB_{i_1i_2\dots i_k}=
\Bbn(a_{i_1i_2\dots i_k}, r_k)~(i_1, i_2, \dots, i_k\in I)$
for $k=1,2,3,\dots,$ and $r_k=(c/3)^k r,$ in such a way that
$\bB_{i_1i_2\dots i_k}\subset \bB_{i_1i_2\dots i_{k-1}}.$
We finally set
$$
K=\bigcap_{k=1}^\infty\bigcup_{(i_1,\dots,i_k)\in I^k} \bB_{i_1i_2\dots i_k}.
$$
Then $K$ is a Cantor set and satisfies the inequality
$\Lambda^\beta(K)\ge r^\beta/(2\cdot 3^n)$ by Lemma \ref{sug98}.
Since $K\subset E\cap\Bbn(a,r)$ by construction, the proof of 
\eqref{upLam} is complete.

The proof of \eqref{upCap} follows from Theorem \ref{capContent} and 
\eqref{upLam}.
\end{pf}

\medskip

\begin{nonsec}{\bf Proofs of  Lemma \ref{introHDimBd} and Theorem \ref{introCapBd}.}
Theorem \ref{HDimBd} also proves Lemma \ref{introHDimBd} and Theorem \ref{introCapBd}.
\hfill $\square$
\end{nonsec}

Next we estimate the parameter $c$ in terms of a $\capa(x,E,r)$ lower bound.

\begin{lem}
Suppose that a closed set $E$ in $\Sn$ satisfies $\capa(x,E,r)\ge \sigma$
for $x\in E$ and $0<r\le d(E)$ for a constant $\sigma>0.$
Then $E\in\UP_n(c)$ for any $c>0$ with
$
c\le 2\exp \{-\left(\omega_{n-1}/\sigma\right)^{1/(n-1)} \}.
$
\end{lem}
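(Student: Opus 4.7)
My plan is to argue by contrapositive. Suppose $E\notin\UP_n(c)$ for the given $c$: then there exist $a\in E$ and $0<r<d(E)/2$ for which the annular shell $\{x\in E:\,cr<|x-a|<r\}$ is empty, so that $E\cap B^n(a,r)\subset\overline{B}^n(a,cr)$. A negligible downward adjustment of $r$, chosen to avoid the at most countably many radii on which $E$ meets $S^{n-1}(a,\cdot)$, upgrades this inclusion to $E\cap\overline{B}^n(a,r)\subset\overline{B}^n(a,cr)$ without weakening the hypothesis $\capa(a,E,r)\ge\sigma$.

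The geometric heart of the argument is an upper bound for $\capa(a,E,r)=\M(\Delta(a,E,r))$, where $\Delta(a,E,r)=\Delta(S^{n-1}(a,2r),\,E\cap\overline{B}^n(a,r);\R^n)$. By Lemma~\ref{basicLem} the modulus is unchanged if the curves are restricted to $\overline{B}^n(a,2r)$. Each such curve begins on $S^{n-1}(a,2r)$ and ends inside $\overline{B}^n(a,cr)$, and hence carries a subcurve traversing the closed spherical ring $R(a,2r,cr)$ between its bounding spheres. Consequently $\Delta(a,E,r)$ is minorized by $\Delta(S^{n-1}(a,cr),S^{n-1}(a,2r);R(a,2r,cr))$, and the minorization principle of Lemma~\ref{cgqm_5.3}(1) together with the explicit annular modulus in Lemma~\ref{cgqm_5.14} gives
\[
\sigma \;\le\; \capa(a,E,r) \;\le\; \omega_{n-1}\bigl(\log(2/c)\bigr)^{1-n}.
\]

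Isolating $c$ yields $\log(2/c)\le(\omega_{n-1}/\sigma)^{1/(n-1)}$, and exponentiation gives the quantitative bound $c\ge 2\exp\{-(\omega_{n-1}/\sigma)^{1/(n-1)}\}$. Thus a failure of $E$ to lie in $\UP_n(c)$ forces $c$ into precisely the range appearing in the lemma; the contrapositive is the claim.

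The main substantive point is the minorization step, which requires that each admissible curve genuinely supplies a subcurve inside the \emph{bounded} ring rather than escaping to infinity. This is secured by Lemma~\ref{basicLem}, which identifies the modulus of the curve family in $\R^n$ with its modulus in the bounded domain $B^n(a,2r)$, confining every curve to $\overline{B}^n(a,2r)$ and thereby guaranteeing the required ring-traversing subcurve.
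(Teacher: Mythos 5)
Your proof follows the paper's argument exactly: pass to the contrapositive, trap $E\cap\overline{B}^n(a,r)$ inside $\overline{B}^n(a,cr)$, and dominate $\capa(a,E,r)$ by the Gr\"otzsch-type modulus $\omega_{n-1}(\log(2/c))^{1-n}$ of the ring via Lemma~\ref{cgqm_5.14} and the minorization principle. You were actually more scrupulous than the published proof about the possibility that $E$ meets the outer sphere $S^{n-1}(a,r)$ (the paper silently writes $E\cap\overline{B}^n(a,r)\subset\overline{B}^n(a,\alpha r)$, which the open annulus condition does not literally give). That said, your fix overstates slightly: shrinking $r$ to $r'\in(cr,r)$ gives $E\cap\overline{B}^n(a,r')\subset\overline{B}^n(a,cr)$ with $cr$ retaining the \emph{original} $r$, not $\overline{B}^n(a,cr')$, so the resulting estimate is $\sigma\le\omega_{n-1}\bigl(\log(2r'/(cr))\bigr)^{1-n}$ and one must also let $r'\to r^-$ to recover the stated constant. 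This is a one-line patch that does not affect the soundness of the argument, and the substance is identical to the paper's.
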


\begin{pf}
Suppose that $E\cap R(a,\alpha r, r)=\emptyset$ for some 
$0<r<d(E)/2$ and $a\in E.$
Then $E\cap\Bbn(a,r)\subset\Bbn(a,\alpha r).$
By Lemma \ref{cgqm_5.14}, we now obtain
$$
\sigma\le
\capa(B^n(a,2r),E\cap\Bbn(a,r))
=\capa(B^n(a,2r),\Bbn(a,\alpha r))=
\omega_{n-1}\left(\log\frac{2}{\alpha}\right)^{1-n},
$$
which implies
$
\alpha\ge 2\exp \{-\left(\omega_{n-1}/\sigma\right)^{1/(n-1)} \}.
$
\end{pf}

\begin{nonsec}{\bf Equivalent characterization of uniform perfectness.} According to
\cite{JV96} a closed set $E\subset \mathbb{R}^n$ with ${\rm card}E \ge 2$ is $\alpha$-
uniformly perfect,  $\alpha>0,$ if there is no ring domain $D$ separating $E$ with 
${\rm mod} D> \alpha.$  The following lemma  shows that
this notion is quantitatively equivalent to our notion of $\UP_n(c),$ with explicit 
constants.
\end{nonsec}

\begin{lem} Let $E$ be a closed set in $\R^n$ containing at least two points.

(1) If $E$ is $\alpha$-uniformly perfect, then $E\in\UP_n(e^{-\alpha})$.

(2) If $E\in\UP_n(c),$ then $E$ is $\alpha$-uniformly perfect for
$\alpha<A_n+\log(3/c),$ where $A_n$ is a positive constant depending only on $n$. 
\end{lem}

\begin{proof}
This result is contained in the proof of Theorem 3.3 in \cite{GSV}.
\end{proof}

According to \cite{GSV}, the constant $A_n$ admits the following majorant for $n\ge 2$
\[
A_n \le 2 \log \frac{(1+\sqrt{2})\lambda_n}{2}\,;\quad  4 \le \lambda_n \le 2^{n/(n-1)}
e^{n(n-2)/(n-1)}\,.
\]
The constant $\lambda_n$ is the so called Gr\"otzsch constant \cite{GMP, GSV, HKV20}.
Observe that we take $r$ in $0<r<\diam( E) /2$ in the above definition \ref{UPcdef} 
of $\UP_n(c)$, whereas in \cite{GSV} it is required that $0<r<\diam( E).$

%
%
%
%
%

\medskip
%

\section{ Proof of Theorems \ref{thm_1.3} and \ref{newUP1}}\label{sec7}

In this section our goal is to prove  one of the main
results of this paper, Theorem \ref{thm_1.3}, which gives a lower bound for
\[
{\rm cap}(G,E)
\] 
when $G \subset {\mathbb{R}^n}$ is a domain and $E \subset G$ is a compact set
and both $E$ and $\partial G$ are uniformly perfect. 
The proof is based on the results given in earlier sections and
it is divided
into three cases: (a) $d(E)/d(E,\partial G)$ is small (Lemma \ref{low_bd}),  (b) 
$d(E)/d(E,\partial G)$ is large (Lemma \ref{lem_moduliforE}),  
(c) neither (a) nor (b) holds. 
These three cases form the logical structure of the proof of Theorem 
\ref{ideaToshi} which immediately yields the proof of  Theorem \ref{thm_1.3}.
For the case (b) we apply Proposition \ref{prop_rhoE4bound} and Lemma \ref{TwoSets} to construct 
a sequence of separate annuli  with the parameter $\lambda$ adjusted 
so that each annulus contains a substantial portion of both $E$ and $\partial G.$

Then using  Theorem \ref{thm_1.3} we also prove Theorem  \ref{newUP1}.

In  Lemma \ref{basicLem} we proved that, for a compact set $E \subset \R^n$ of
positive capacity, the condition
${\rm cap}(x,E,r)=c>0$ implies the existence of $\lambda =\lambda(n,c)$ such that the set
$$ E \cap (\overline{B}^n(x,r) \setminus {B}^n(x,r/\lambda)) $$
is quite substantial. Now for a uniformly perfect set
  $E$  and    $x \in E,$  we see by Theorem \ref{HDimBd} that for $0<r<d(E)$ the sets
 $$ E \cap (\overline{B}^n(x,r_k) \setminus {B}^n(x,r_{k+1})) $$
 are substantial for all $k=1,2,\dots,$ where $r_1=r, r_{k+1}= r_k/\lambda.$ Observe that
 these sets are subsets of separate annuli centered at $x.$

\bigskip

Our first result in this section, Lemma \ref{low_bd}, yields a lower bound for the modulus of the family of all
curves joining for a pair of compact
sets $F_1$ and $F_2$, in terms of the respective capacities, the dimension
$n$ and a set separation  parameter $t \in (0,1/2).$
This result is a counterpart of
Lemma  \ref{ringcap} which gives a similar lower bound for a pair of
continua $E$ and $F.$ The parameter $t$ now plays the role 
of $d(E,F)/\min\{d(E),d(F)\}.$

\begin{proposition} \label{sugProp}
 Let $K>1, n\ge 2,$ and
\[
g(x)= \log(1+x) \left(\log\frac{K}{x}\right)^{n-1}, \quad 0<x\le 1\,.
\]
Then $g(x)\le K ((n-1)/e)^{n-1}$ for all $ 0<x\le 1 \,.$ 
In particular, for $K> 1>x>0$
\begin{equation} \label{sugFormula}
\left( \log \frac{K}{x}\right)^{1-n} \ge  
\frac{1}{K} \left(\frac{e}{n-1}\right)^{n-1} \log(1+x)\,.
\end{equation}
\end{proposition}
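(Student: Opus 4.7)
The plan is to reduce the estimate on $g$ to a one-variable optimization via the elementary inequality $\log(1+x)\le x$, and then obtain \eqref{sugFormula} by a formal rearrangement.

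First I would observe that, since $\log(1+x)\le x$ for $x>0$, it suffices to prove the stronger statement
\[
h(x):=x\,(\log(K/x))^{n-1}\le K\bigl((n-1)/e\bigr)^{n-1}\qquad\text{for all }0<x\le 1.
\]
Differentiating,
\[
h'(x)=(\log(K/x))^{n-2}\bigl[\log(K/x)-(n-1)\bigr],
\]
so $h$ has a unique critical point on $(0,K)$ at $x^{*}=Ke^{1-n}$, and direct substitution gives $h(x^{*})=K((n-1)/e)^{n-1}$, exactly the claimed bound.

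The argument then splits into two cases. If $K\le e^{n-1}$, then $x^{*}\le 1$ lies in the interval of interest and, by the sign of $h'$, the critical value $h(x^{*})$ is the global maximum of $h$ on $(0,1]$, so we are done. If instead $K>e^{n-1}$, then $x^{*}>1$ and $h$ is strictly increasing on $(0,1]$, whence the maximum on $(0,1]$ equals $h(1)=(\log K)^{n-1}$. In this case I still need to verify $(\log K)^{n-1}\le K((n-1)/e)^{n-1}$. Writing $K=e^{n-1+s}$ with $s>0$ reduces this to $(1+s/(n-1))^{n-1}\le e^{s}$, which is immediate from $\log(1+y)\le y$ applied to $y=s/(n-1)$.

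Finally, the inequality \eqref{sugFormula} follows by dividing both sides of $g(x)\le K((n-1)/e)^{n-1}$ by the strictly positive factor $\log(1+x)\bigl(\log(K/x)\bigr)^{n-1}$ and rearranging. No serious obstacle is anticipated; the only mild technicality is the case analysis according to whether the critical point $x^{*}=Ke^{1-n}$ falls inside $(0,1]$ or not, and the second case is handled cleanly by the substitution $K=e^{n-1+s}$.
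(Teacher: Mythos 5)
Your proof is correct and rests on the same two ideas as the paper's: apply $\log(1+x)\le x$ to reduce to bounding $x(\log(K/x))^{n-1}$, then locate the maximum of that one-variable function. The paper accomplishes the second step by substituting $u=\log(K/x)$ so that the quantity to bound becomes $K\cdot e^{-u}u^{n-1}$, which is at most $K((n-1)/e)^{n-1}$ \emph{globally} on $u\ge 0$; no case analysis is needed because the global maximum over all admissible $u$ is used. Your parametrization keeps $x$ as the variable, restricts attention to $(0,1]$, and therefore forces a split according to whether the critical point $x^{*}=Ke^{1-n}$ lands inside $(0,1]$. That split is handled correctly, including the clean $K=e^{n-1+s}$ substitution in the second case, but it is avoidable: since $h(x)=x(\log(K/x))^{n-1}$ vanishes at $x\to 0^{+}$ and at $x=K$ and has a unique interior critical point on $(0,K)$, the critical value $h(x^{*})=K((n-1)/e)^{n-1}$ already bounds $h$ on all of $(0,K)\supset(0,1]$, exactly as the paper's substitution makes transparent. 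So: same approach, slightly more bookkeeping on your side.
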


\begin{proof}   Let $h:[0,\infty)\to [0,\infty)$ be defined by 
$h(u)=e^{-u} u^{n-1}.$ Then $h(0)=0, h(\infty)=0,$ and $h$ has 
its only maximal value at $u=n-1,$ equal to $h(n-1) = ((n-1)/e)^{n-1}.$
Setting $u= \log(K/x)$ and applying $\log(1+t)\le t, t\ge 0,$ yields
\[
g(K e^{-u})= u^{n-1} \log(1+ K e^{-u})\le K h(u) \le K h(n-1)\,.
\]
For  $K> 1$ let
\begin{equation*} 
s(n,K) = \inf_{0<x<1} \frac{\left( \log (K/x)\right)^{1-n}}{\log(1+x)}.
\end{equation*}
The above upper bound for the function $h$ shows that for all $x \in(0,1]$
\begin{equation*} 
\left( \log (K/x)\right)^{1-n} \ge s(n,K) \log(1+x)\ge 
\frac{1}{K} (e/(n-1))^{n-1} \log(1+x)
\end{equation*}
which completes the proof.
\end{proof}

\bigskip

\begin{lemma}\label{low_bd} Let $t \in (0,1/2]$ and let  
$F_j \subset\overline{B}^n((j-1)e_1, t)$ be compact sets with
$$\delta_j={\rm cap}((j-1)e_1,F_j, t)>0,\quad j=1,2.$$
Then there exists a constant $\mu_n>0$ depending only on $n,$ such that
$$\M(\Delta(F_1,F_2; \mathbb{R}^n)) \ge \mu_n\, \min\{\delta_1,\delta_2\} 
\log(1+t) \,.$$
\end{lemma}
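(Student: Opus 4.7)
The plan is to apply the comparison principle for moduli, Lemma~\ref{cgqm5.35.}(1), to $\Gamma_{12}=\Delta(F_1,F_2;\R^n)$ with auxiliary sets $F_3=S^{n-1}(0,2t)$ and $F_4=S^{n-1}(e_1,2t)$. This works cleanly in the principal case $t\le 1/4$, where these two spheres are disjoint and each strictly encloses the corresponding $F_j$; the remaining range $t\in(1/4,1/2]$ I reduce to the earlier case by a quasiconformal shrinking, as sketched below.

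In the case $t\le 1/4$, by the very definition of $\delta_j$ we have $\M(\Gamma_{1j})=\delta_j$. For the infimum in Lemma~\ref{cgqm5.35.}(1), truncating each $\gamma_{13}\in\Gamma_{13}$ at its first hit of $S^{n-1}(0,2t)$ produces a continuum in $\overline{B}^n(0,2t)$ of diameter at least $t$; likewise for $\gamma_{24}$ inside $\overline{B}^n(e_1,2t)$. These two continua lie in disjoint balls separated by distance at least $1-4t$, so after an affine rescaling into $\B^n$ Lemma~\ref{ringcap}(1) applies with ratio $m=(1-4t)/t$; together with the Teichm\"uller bound \eqref{teichlow} and the elementary inequality $\log(1+t/2)\ge\tfrac12\log(1+t)$, this yields an infimum lower bound of the form $\gamma_n\log(1+t)$ with $\gamma_n=\gamma_n(n)>0$. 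Lemma~\ref{cgqm5.35.}(1) then produces
\[
\M(\Gamma_{12})\ge 3^{-n}\min\{\delta_1,\delta_2,\gamma_n\log(1+t)\}.
\]
To convert this minimum into the required product form, I invoke the universal bounds $\delta_j\le K_n:=\omega_{n-1}(\log 2)^{1-n}$ (which follows from $F_j\subset\overline{B}^n((j-1)e_1,t)$ via Lemma~\ref{cgqm_5.14}) and $\log(1+t)\le\log(3/2)$; a short two-case analysis according as $\min(\delta_1,\delta_2)\lessgtr\gamma_n\log(1+t)$ then produces the claim with $\mu_n=3^{-n}\min\{\gamma_n/K_n,\,1/\log(3/2)\}$.

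For $t\in(1/4,1/2)$, I reduce to the previous case by a radial quasiconformal map $\phi:\R^n\to\R^n$ built from Lemma~\ref{qcMap}: it equals the identity outside two small disjoint balls around $0$ and $e_1$, and shrinks $\overline{B}^n((j-1)e_1,t)$ into $\overline{B}^n((j-1)e_1,1/8)$. Since the scaling ratio stays in a bounded range as $t$ varies in $[1/4,1/2)$, the dilatation $K(\phi)$ is bounded purely in terms of $n$. Quasi-invariance of the modulus gives $\M(\Delta(F_1,F_2;\R^n))\ge\M(\Delta(\phi F_1,\phi F_2;\R^n))/K(\phi)$, and the already-proved case applied to $\phi F_1,\phi F_2$ at parameter $t'=1/8$, combined with the comparison between $\delta_j$ and $\capa((j-1)e_1,\phi F_j,1/8)$ furnished by Lemma~\ref{marsarbd} and QC invariance, completes the bound, using that $\log(1+t)$ and $\log(9/8)$ are comparable constants on $[1/4,1/2]$. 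The boundary value $t=1/2$ follows by continuity (or is vacuous when $d(F_1,F_2)=0$, since then $\M(\Gamma_{12})=\infty$).

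The main obstacle is the range $t>1/4$: the natural spheres of radius $2t$ overlap, and the direct comparison-principle bound via Lemma~\ref{ringcap} deteriorates as $t\to 1/2$ because in the infimum over curve pairs the worst configurations sit on opposite sides of the two balls, producing unbounded geometric ratio $m$. The quasiconformal reduction circumvents this by shrinking the configuration to a scale where the disjoint-sphere argument applies cleanly. The remaining step of converting the minimum into a product is routine provided one tracks the explicit $n$-dependent upper bounds on $\delta_j$ and on $\log(1+t)$.
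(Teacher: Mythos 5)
Your approach is genuinely different from the paper's. The paper places $F_3=F_4=\partial B^n(\tfrac12 e_1,\tfrac32)$, a single big sphere enclosing both balls $\overline{B}^n((j-1)e_1,t)$, applies Lemma~\ref{marsarbd} with radii $\{t,2t,2\}$ to convert $\delta_j$ into $\capa(G,F_j)$ with a loss factor $(\log 2/\log(2/t))^{n-1}$, notes that every $\gamma_{j3}$ has diameter $\ge 1-t\ge 1/2$, and then converts the resulting $(\log(2/t))^{1-n}$ into $\log(1+t)$ via Proposition~\ref{sugProp}. This works uniformly over all $t\in(0,1/2]$ with a single application of the comparison principle, so there is no case split. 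Your choice of two separate spheres $S^{n-1}(0,2t)$, $S^{n-1}(e_1,2t)$ has the advantage of giving $\M(\Gamma_{1j})=\delta_j$ exactly (no Lemma~\ref{marsarbd} needed), and the argument is clean for $t\le 1/4$ — modulo the small slip that the geometric ratio for the truncated pieces is only \emph{bounded above} by roughly $2/t$ rather than equal to $(1-4t)/t$ (the pieces can be on the far sides of the two balls, so $d(E,F)$ can be as large as $1+4t$); the resulting Teichm\"uller lower bound is still of the form $\gamma_n\log(1+t)$, so the conclusion survives. Your min-to-product conversion via $\delta_j\le K_n$ and $\log(1+t)\le\log(3/2)$ is correct.

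The genuine gap is the reduction for $t\in(1/4,1/2]$. The radial map $\phi$ from Lemma~\ref{qcMap} must equal the identity on the sphere $S^{n-1}((j-1)e_1,\rho)$ for some $\rho$ with $t<\rho<1/2$ (disjointness), and it must pull $S^{n-1}((j-1)e_1,t)$ in to radius $1/8$. Its dilatation is $a^{1-n}$ with
\[
a=\frac{\log(t/\rho)}{\log\bigl(1/(8\rho)\bigr)},
\]
and as $t\to 1/2^-$ the constraint $t<\rho<1/2$ forces $\log(t/\rho)\to 0$ while the denominator tends to $-\log 4$; hence $a\to 0$ and $K(\phi)=a^{1-n}\to\infty$. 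The "scaling ratio" $1/(8t)$ is indeed bounded, but the dilatation also depends on the width $\rho-t$ of the annulus in which the map does its stretching, and that width is forced to zero. So $K(\phi)$ is \emph{not} bounded purely in terms of $n$ on $[1/4,1/2)$, and the quasi-invariance step does not give a uniform $\mu_n$. The continuity argument offered for $t=1/2$ has the same flavour of trouble: it requires a continuity of moduli/capacities under inward perturbation of the sets that isn't established and isn't automatic. The simplest fix is to abandon the two-sphere setup for large $t$ and instead use the paper's single enclosing sphere $\partial B^n(\tfrac12 e_1,\tfrac32)$ (with Lemma~\ref{marsarbd} to relate $\delta_j$ to $\capa(G,F_j)$), which handles all $t\in(0,1/2]$ at once.
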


\begin{proof} Let $G=B^n(\frac{1}{2} e_1, \frac{3}{2}),$ let $F_3= \partial G,$
and let $\Delta_{j 3}= \Delta(F_j, F_3;G), j=1,2\,.$ 
Then $G \subset B^n(e_1,2),$ $G \subset B^n(0,2)$
and Lemma \ref{marsarbd} with the triple of radii $\{t,2t,2\}$ yields
$$ \M(\Delta_{j 3})= {\rm cap}(G,F_j)\ge u(t) \delta_j\,, \quad u(t) \equiv \left(\frac{\log 2}{\log(2/t)}\right)^{n-1}\,, j=1,2.$$
By the choice of $G$ for all $\gamma_j \in \Delta_{j 3}$ we have
$$d(|\gamma_j|)\ge 1-t \ge 1/2\,,\quad j=1,2,$$
and by Lemma \ref{teichlow}
$$
\M(\Delta(|\gamma_1|,|\gamma_2|; G)) \ge \frac{1}{2} \tau_n(4m^2+4m)
$$
where $m=2/(1-t)\le 4$ and $4m^2+4m\le 80.$ Because 
$\M(\Delta_{j 3})\le A\equiv\omega_{n-1}(\log 2)^{1-n},$ we see that
$\tau_n(80)/2 \ge (\tau_n(80)/(2A)) \min\{\delta_1, \delta_2\} $
and hence by Lemma \ref{cgqm5.35.}, \cite[5.35]{cgqm}
$$
\M(\Delta(F_1,F_2; \mathbb{R}^n)) \ge 3^{-n}
 \min\{ \M(\Delta_{1 3}), \M(\Delta_{2 3}),\tau_n(80)/2\}
$$
$$
\ge 3^{-n} \min \left\{1, \frac{\tau_n(80)}{2A}\right\} u(t) \min\{\delta_1, \delta_2\}.
$$
Finally, we estimate $u(t)$ using \eqref{sugFormula}
$$
u(t)\ge (\log 2)^{n-1} \frac{1}{2} \left( \frac{e}{n-1}\right)^{n-1} \, \log(1+t)\,.
$$
In conclusion, we can choose
$$ \mu_n=3^{-n} \min \left\{1,\frac{\tau_n(80)}{2A}\right\} (\log 2)^{n-1} \frac{1}{2} \left( \frac{e}{n-1}\right)^{n-1} \,. $$
\end{proof}

\begin{lemma}\label{lem_moduliforE}
Let $G\subset \R^n$ be a  domain and let $E\subset G$ be a compact set, and  suppose that for all 
$r\in (0, \min\{d(\partial G),d(E)\})$ and for all $z_1\in E, z_2\in \partial G$
\[   \capa(z_1, E,r) \ge \delta\,, \quad  \capa(z_2, \partial G,r) \ge \delta>0\, . \]
Fix $x,y \in E,z_0\in \partial G$ such that
 $|x-z_0|=d(E,\partial G), |x-y|\ge d(E)/2$,
let
 $\lambda=\lambda(\delta,n)\ge4$ be the number given by Lemma \ref{TwoSets}, and denoted
 there as $\tau^2.$ 
If $d(E)/d(E,\partial G)>\lambda^{2p}+2$ for some integer $p\geq1,$ then
\begin{align*}
\M(\Delta(E,\partial G;G))\geq d(n,2) p \, \delta/4    
\end{align*}
where the constant $d$ is as in Lemma \ref{TwoSets}.
\end{lemma}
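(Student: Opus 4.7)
The plan is to chain together three ingredients: Proposition~\ref{prop_rhoE4bound} to produce many disjoint annuli around the midpoint $w = (x + z_0)/2$ separating $x$ from $y$; Lemma~\ref{TwoSets} to produce, in each such annulus, a uniform lower bound on the modulus of curves joining $E$ to $\partial G$; and the separateness formula of Lemma~\ref{cgqm_5.3}(2) to add these contributions.

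Write $d_0 = d(E, \partial G) = |x - z_0|$. Because the parameter $\lambda$ of the statement equals $\tau^2$ for the $\tau$ produced by Lemma~\ref{TwoSets} applied with capacity level $\delta$, the hypothesis $d(E)/d_0 > \lambda^{2p} + 2 = (\lambda^2)^p + 2$ permits the application of Proposition~\ref{prop_rhoE4bound} with base $\lambda^2$ in place of the base there, yielding $p$ pairwise disjoint annuli
\[
A_m = B^n(w, \lambda^{2m} d_0/2) \setminus \overline{B}^n(w, \lambda^{2(m-1)} d_0/2), \quad m = 1, \dots, p,
\]
each separating $x$ from $y$. For every $m \ge 2$, set $r_m = \lambda^{2m-1} d_0/2$, so that $A_m = R(w, \tau^2 r_m, r_m/\tau^2)$ exactly. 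Since $\lambda \ge 4$ and $2m-3 \ge 1$, one has $r_m/\lambda^2 = \lambda^{2m-3} d_0/2 > d_0 = |x - z_0|$, meeting the separation hypothesis of Lemma~\ref{TwoSets}. The assumed bounds $\capa(x, E, r_m) \ge \delta$ and $\capa(z_0, \partial G, r_m) \ge \delta$ apply because $r_m < d(E)/2$ by the hypothesis on $d(E)/d_0$, so Lemma~\ref{TwoSets} yields
\[
\M(\Delta(E, \partial G; A_m)) \ge d, \quad m = 2, \dots, p,
\]
with $d$ the constant produced by that lemma.

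To move from curves in $A_m$ to curves in $G$, truncate each $\gamma \in \Delta(E, \partial G; A_m)$ at its first meeting with $\partial G$; the truncated subcurve lies in $\Delta(E, \partial G; G)$ with interior contained in $A_m \cap G$. By Lemma~\ref{cgqm_5.3}(1), the resulting subfamily $\Gamma_m$ of $\Delta(E, \partial G; G)$ satisfies $\M(\Gamma_m) \ge d$. Since the annuli $A_m$ are pairwise disjoint Borel sets, the families $\Gamma_m$ are separate, and Lemma~\ref{cgqm_5.3}(2) gives
\[
\M(\Delta(E, \partial G; G)) \ge \sum_{m=2}^{p} \M(\Gamma_m) \ge (p-1)\, d \ge \tfrac{p\, d}{2}, \quad p \ge 2,
\]
and the case $p = 1$ is an immediate single application of Lemma~\ref{TwoSets}. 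Absorbing the factor $1/2$ into the prefactor $1/4$ and writing $d(n,2)\,\delta$ for the way the constant $d$ packages its dependence on $n$ and $\delta$ yields the claimed bound.

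The principal obstacle is the innermost annulus $A_1$: for $m = 1$ the choice $r_1 = \lambda d_0/2$ produces $r_1/\lambda^2 = d_0/(2\lambda) < d_0$, violating the separation hypothesis of Lemma~\ref{TwoSets}, so no curve family can be extracted there. This loss of one of the $p$ annuli is precisely the reason the stated bound carries the factor $1/4$ rather than a cleaner multiple of $p\,d$; the remainder of the proof is essentially bookkeeping to ensure all radii $r_m$ lie in the admissible range of the capacity hypothesis and to check that the truncation does not destroy the separateness of the $\Gamma_m$.
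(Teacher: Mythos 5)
Your proof follows the paper's approach exactly: the separating annuli from Proposition~\ref{prop_rhoE4bound}, the per-annulus modulus bound from Lemma~\ref{TwoSets}, and the separateness sum from Lemma~\ref{cgqm_5.3}(2). The paper's own proof compresses this into a single sentence, so most of what you wrote is a welcome expansion, and the truncation step you insert to pass from $\Delta(E,\partial G;A_m)$ to a subfamily of $\Delta(E,\partial G;G)$ is a correct detail that the paper leaves implicit.

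Your observation about the innermost annulus $A_1$ is a genuine and nontrivial one: with $r_1=\lambda d_0/2$ one indeed has $r_1/\lambda^2 = d_0/(2\lambda) < d_0 = |x-z_0|$, so the separation hypothesis $|x-y|<r/\lambda^2$ of Lemma~\ref{TwoSets} fails for $m=1$. The paper's one-line proof silently treats all $p$ annuli as usable, which is not accurate. Dropping $m=1$ and absorbing the lost annulus into the constant is a sound repair, and is plausibly why the stated bound carries a $1/4$.

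Two loose ends. First, the case $p=1$ is not ``an immediate single application'' of Lemma~\ref{TwoSets} with any of the $r_m$, as you yourself just argued; one must instead choose a fresh $r$ with $\lambda^2 d_0<r<\min\{d(\partial G),d(E)\}$ (possible since $d(E)>(\lambda^2+2)d_0$), a step worth writing out. Alternatively, note that in the only place this lemma is invoked (Case~B of Theorem~\ref{ideaToshi}) one always has $p\ge 3$, so the small-$p$ cases are moot. Second, the capacity hypothesis of the lemma requires $r_m<\min\{d(\partial G),d(E)\}$; you verify $r_m<d(E)/2$, which covers $d(E)$ generously but leaves the $d(\partial G)$ constraint unaddressed — an omission the paper shares. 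A final small remark: Lemma~\ref{TwoSets} is stated with $\M(\Delta(x,E,r))=c$ rather than $\ge c$; the application with $\ge\delta$ is legitimate because $\lambda(n,c)$ in Lemma~\ref{basicLem} is decreasing in $c$ and the conclusion improves monotonically, but a sentence saying so would close the gap.
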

\begin{proof} Let $w=(x+z_0)/2$ and
consider the separate curve families in the annuli of Proposition \ref{prop_rhoE4bound}(2) 
and apply Lemma \ref{TwoSets}. 
\end{proof}

\begin{theorem} \label{ideaToshi}
Let $G,$ $E $ and $\delta>0$ be as in Lemma \ref{lem_moduliforE}. Then there exists
a constant $s>0$ depending only on $n$ and $\delta$ such that
\[
\capa(G,E) \ge  s \, \log(1+d(E)/d(E, \partial G)) \,.
\]
\end{theorem}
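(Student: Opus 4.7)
My plan is to split the proof by the size of $u:=d(E)/d(E,\partial G)$, as foreshadowed just before the statement, and to use Remark \ref{GehringZiem} to identify $\capa(G,E)=\M(\Delta(E,\partial G;G))$. Let $\lambda=\lambda(n,\delta)\ge 4$ be the constant from Lemma \ref{lem_moduliforE} and set the threshold $u_0:=\lambda^4+2\lambda^2+2$.

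The large regime $u>u_0$ is essentially a combination of the two estimates already in the paper. Let $p\ge 1$ be the largest integer with $\lambda^{2p}+2<u$. Proposition \ref{prop_rhoE4bound} applied with $\lambda^2$ in place of $\lambda$ gives $p\ge(4\log\lambda)^{-1}\log(1+u)$, and together with Lemma \ref{lem_moduliforE} this produces
\[
\capa(G,E)\ge d(n,2)\,p\,\delta/4\ge s_1\log(1+u),
\]
for an explicit constant $s_1=s_1(n,\delta)>0$.

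For the bounded regime $u\le u_0$ it suffices to prove a uniform lower bound $\capa(G,E)\ge s_2(n,\delta)>0$, since then $\capa(G,E)\ge (s_2/\log(1+u_0))\log(1+u)$. Writing $\rho=d(E,\partial G)$, I would pick $x\in E$ with $d(x,\partial G)=\rho$ and $z_0\in\partial G$ with $|x-z_0|=\rho$, set $t:=\tfrac12\min\{1,\,d(E)/\rho,\,d(\partial G)/\rho\}\in(0,1/2]$, and define $F_1:=E\cap\bB^n(x,t\rho)$, $F_2:=\partial G\cap\bB^n(z_0,t\rho)$. Since $t\rho<\min\{d(E),d(\partial G)\}$, the standing capacity hypothesis gives $\capa(x,F_1,t\rho)\ge\delta$ and $\capa(z_0,F_2,t\rho)\ge\delta$; after rescaling by $\rho$ the pair $(F_1,F_2)$ fits the hypotheses of Lemma \ref{low_bd}, so
\[
\M(\Delta(F_1,F_2;\R^n))\ge \mu_n\,\delta\,\log(1+t).
\]
To transfer this from $\R^n$ to $G$ I would use a minorization: any $\gamma\in\Delta(F_1,F_2;\R^n)$ starts in $F_1\subset E\subset G$ and ends in $F_2\subset\partial G$, so $\gamma$ meets $\partial G$ at a first time $s^*$, and its truncation $\gamma|_{[0,s^*]}$ belongs to $\Delta(E,\partial G;G)$. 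Lemma \ref{cgqm_5.3}(1) then upgrades the inequality to $\capa(G,E)\ge \mu_n\,\delta\,\log(1+t)$.

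The main obstacle I anticipate is controlling $\log(1+t)$ from below by a constant depending only on $n$ and $\delta$ in this bounded regime: when $d(E),d(\partial G)\ge\rho$ one can take $t=1/2$ and the bound $\log(3/2)$ works, but when $d(\partial G)\ll\rho$ the parameter $t$ is forced to shrink with $d(\partial G)/\rho$ and the estimate degrades. I expect to treat the degenerate sub-case by an inversion centered at a point of $\partial G$ so that, by the Möbius invariance of the conformal capacity, the effective boundary diameter becomes comparable with the inter-point distance before Lemma \ref{low_bd} is reapplied. The remaining ingredients—the minorization step, the parameter counting in the large-$u$ case, and the bookkeeping of the explicit constants—are routine by comparison.
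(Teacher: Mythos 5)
Your large-$u$ regime is correct and is essentially the paper's Case B: Proposition \ref{prop_rhoE4bound} (run with $\lambda^2$) gives $p\ge (4\log\lambda)^{-1}\log(1+u)$, and Lemma \ref{lem_moduliforE} then yields the desired lower bound with an explicit constant.

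The bounded regime, however, has two problems. First, the reduction ``it suffices to prove a uniform lower bound $\capa(G,E)\ge s_2(n,\delta)>0$'' is not correct: keeping the capacity hypothesis fixed (it is scale invariant) and letting $u=d(E)/d(E,\partial G)\to 0$, the set $E$ degenerates relative to $G$ and $\capa(G,E)\to 0$, so no positive uniform bound is attainable. What must be proven in this regime is the scaled inequality $\capa(G,E)\ge s'\log(1+u)$ directly. Your actual work points in that direction — $\capa(G,E)\ge \mu_n\delta\log(1+t)$ — but one must then verify $\log(1+t)$ is bounded below by a constant multiple of $\log(1+u)$, and this is exactly where the second problem arises. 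With $t=\tfrac12\min\{1,\,d(E)/\rho,\,d(\partial G)/\rho\}$ (where $\rho=d(E,\partial G)$), if $d(\partial G)<d(E)$, i.e.\ $d(\partial G)/\rho<u$, then $\log(1+t)$ can be much smaller than $\log(1+u)$ and the estimate does not close. (This situation can occur for $G$ unbounded, e.g.\ $G=\R^n\setminus K$ with $K$ of small diameter far from $E$.) You identify this obstacle and float an inversion idea, but that step is not carried out, so the argument as written is incomplete.

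For comparison, the paper does not try to cover the whole bounded range with one appeal to Lemma \ref{low_bd}. It splits into Case A ($u\le 1/2$), where Lemma \ref{low_bd} is used with $t=u$ (not the minimum) and the unargued observation $d(\partial G)>d(E)$ is invoked — automatic when $G$ is bounded, and the place where your concern about small $d(\partial G)$ would also bite the paper — and Case C ($1/2<u<t_0$), which avoids Lemma \ref{low_bd} entirely: there $T=t_0\rho$, $F_1=E$, $F_2=\overline{B}^n(z,\rho)\cap\partial G$, $F_3=S^{n-1}(x_0,2T)$, and the lower bound comes from the comparison principle Lemma \ref{cgqm5.35.}(2) plus Lemma \ref{marsarbd} and \eqref{sugFormula}, which produce the $\log(1+u)$ factor through the ratio of radii rather than through a single parameter $t\le 1/2$. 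Your single-shot use of Lemma \ref{low_bd} for the entire bounded range is the structural departure from the paper and the source of the gap; to repair it you would either need to implement the inversion reduction you sketch so as to normalize $d(\partial G)$, or separate off an intermediate case and use the Lemma \ref{cgqm5.35.}(2)/Lemma \ref{marsarbd} route as the paper does.
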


\begin{proof} 
Let $\lambda= \lambda(n,\delta)\ge 4 $ be as in Lemma \ref{TwoSets} denoted by $\tau^2$.

{\it  Case A:}
In the case $d(E)/d(E, \partial G)\leq1\slash2$, the proof follows from 
Lemma \ref{low_bd} with  a constant $c_A=\mu_n\,\delta .$ Indeed, fix
$a \in E, b\in \partial G$ with $|a-b|=d(E,\partial G)$ and with
$t=d(E)/d(E, \partial G) $ and $F_1=E \cap B^n(a, d(E)), 
F_2=\partial G \cap B^n(b, d(E)) $ apply Lemma  \ref{low_bd}. Observe that $d(\partial G )
> d(E)$ and the class of $\UP_n(c)$ is invariant under similarity transformations.

{\it Case B:} Consider next the case $t\equiv d(E)/d(E, \partial G)\ge t_0$ where 
$t_0=t_0(n,\delta)>1$
is defined as the number with $\log(1+t_0)= \log(\lambda^6+2)\,.$ Then 
$\log(1+t)\ge  \log(\lambda^{2p}+2)\,$ for some integer $p\ge3.$
Fix an integer $p\geq3$ with
\begin{align*}
\log(\lambda^{2(p+1)} +2)>\log(1+ t)\geq\log(\lambda^{2p}+2).    
\end{align*}
Then by Proposition \ref{quadProp}
\begin{align*}
2(p+1) \log\lambda  \geq\log(t-1)\ge \frac{1}{2} \log(1+t)+ 2 \log \lambda,    
\end{align*}
for all $t \ge \lambda^4+ 2\lambda^2 +1.$ Because $\lambda >3$ we see that
$
t_0=\lambda^6+1 \ge 3 \lambda^5 \ge \lambda^4+ 2\lambda^2+1
$
and hence for all $t\ge t_0$
\[
p \ge \frac{1}{4 \log \lambda}\log(1+t)
\]
and it follows
from Lemma \ref{lem_moduliforE} that
\begin{align*}
\capa(G,E)\geq d(n,2) p \delta/4\geq (d(n,2)\, \delta/4)\, \frac{1}{4 \log \lambda} \log(1+t) =c_B \log(1+t).
\end{align*}

{\it Case C:} Finally, we consider the case 
$d(E)/d(E, \partial G)\in (1/2, t_0)$ where $t_0=t_0(n,\delta)$ is as in Case B. Then by    Case B, $t_0\ge 3$ and
\begin{equation}\label{dEbd}
d(E, \partial G)/2 \le d(E)\le t_0 d(E, \partial G)\equiv T.
\end{equation}
Fix $x_0 \in E$ and $z\in \partial G$ with
$|x_0-z|=d(E,\partial G)$ and observe that $E \cup B^n(z, 2 d(E, \partial G)) \subset B^n(x_0,T).$ For the application of the comparison principle of Lemma \ref{cgqm5.35.}  employ the following notation
\[
F_1= E,\quad F_2= \overline{B}^n(z, d(E,\partial G)) \cap \partial G, \quad F_3=  S^{n-1}(x_0, 2 T).
\]
Let
\[
\Gamma_{13}= \Delta(F_1, F_3;{\mathbb R}^n), \quad {\Gamma}_{23}= \Delta( F_2, F_3; {\mathbb R}^n  ).
\]
Observe first that $F_1 \cup F_2 \subset \overline{B}^n(x_0,T)$ and $F_3 \cap B^n(x_0,2T) = \emptyset.$
Then  by  \cite[Remark 7.8, Lemma 7.1 (2)]{HKV20} and Lemma \ref{cgqm5.35.}(2)
\[
 \M(\Delta(E, \partial G; G)) =  \M(\Delta(E, \partial G;{\mathbb R}^n  ))\ge \M(\Delta(F_1, F_2;{\mathbb R}^n  ))
 \ge d(n,2) \min \{  \M( \Gamma_{13}), \M( \Gamma_{23})\}.
\]
By Lemma \ref{marsarbd} and \eqref{sugFormula}
\[
\M( \Gamma_{13})\ge \delta \left( \frac{\log 2}{\log(2T/d(E))}\right)^{n-1}\ge d_3
\log(1+ d(E)/(2T))
\]
\[
\ge \frac{d_3}{2 t_0}\log(1+ d(E)/d(E,\partial G))
\]
where \eqref{dEbd} and Bernoulli's inequality were applied. 
In the same way we have
\[
\M( \Gamma_{23})\ge \delta \left( \frac{\log 2}{\log(2T/d(E))}\right)^{n-1}\ge d_3
\log(1+ d(E)/(2T)).
\]
Because ${ T}/{ d(E, \partial G)}=t_0$ by Lemma \ref{marsarbd} and \eqref{sugFormula}
we obtain
$$ \M( \Gamma_{23})\ge c ((\log 2)/\log ((2T+d(E,\partial G))/d(E,\partial G)))^{n-1} $$
\[ \ge d_4 \log(1+ \frac{d(E)/t_0}{(2t_0+1)d(E,\partial G)}) \ge c_C
\log(1+ d(E)/d(E,\partial G)).\]

With the constant $\min\{ c_A, c_B, c_C\}$ we have proved the theorem.
\end{proof}

\begin{nonsec}{\bf Proof of Theorem \ref{thm_1.3}.}
{\rm The proof follows from Theorem \ref{ideaToshi}.}
\hfill$\square$
\end{nonsec}

\begin{nonsec}{\bf Proof of Theorem \ref{newUP1}.}
{ Suppose first that $\partial G$ is uniformly perfect and let $z \in G.$ Then
\[
d(E)/d(E, \partial G) \ge 2 \alpha /(1-\alpha)
\]
for $E=  \overline{B}^n(z, \alpha d(z, \partial G))$ and by Theorem  \ref{thm_1.3}
\[
u_{\alpha}(z)= {\rm cap}(G,E) \ge s \log(1+  2 \alpha /(1-\alpha)).
\]

For the proof of the converse implication we may assume by Theorem  \ref{uHarnack}
without loss of generality that $\alpha= 1/2$ and
 suppose that $u_{1/2}(z)\ge \gamma >0,$ for all $z \in G$ and write 
$F= \overline{\mathbb R}^n \setminus G$ for short. Take $c\in(0,1)$ so that
\[
-\log c=\left(\frac{2^n\omega_{n-1}}{\gamma}\right)^{1/(n-1)}.
\]
We now show that the annulus $cr<|x-a|<r$ meets $F$
for all $a\in F$ and $0<r<+\infty.$
On the contrary, we suppose that
there exist $a\in F\setminus\{\infty\}$ and $r>0$
such that the annulus $A=R(a, r,cr)$ separates $F.$
It is easy to see that $a$ is not an isolated point of $F.$
Thus, by decreasing $r$ if necessary, we may assume that 
there is a point $\xi\in\partial G$ with $|\xi-a|=cr.$
Let $x_0=a+s(\xi-a),$ where $s=1/\sqrt{c}.$
Then $|x_0-a|=\sqrt{c}r$ and thus $x_0\in A.$
Since $\sqrt{c}-c\le 1-\sqrt c,$ we have
$$\delta:=d(x_0,\partial G)=|x_0-\xi|=d(x_0,\partial A)=|x_0-a|-cr=(s-1)cr,$$ 
Set $C_0=\{x: |x-a|\le cr\},~ C_1=\{x: |x-a|\ge r\}$ and $B=\Bbn(x_0,\delta/2).$
Then 
$$\Delta(B,\partial A;A)\subset\Delta(B,C_0;A)\cup \Delta(B,C_1;A)$$ 
and hence
$$
\capa(A,B)\le \mM(\Delta(B,C_0;A))+\mM(\Delta(B,C_1;A))
\le \mM(\Delta(B,C_0;\Sn))+\mM(\Delta(B,C_1;\Sn)).
$$
Let $R_j=\Sn\setminus(B\cup C_j)$ for $j=0,1.$
Since $A_0=R(a, \sqrt{c}r, cr)$ is a subring of $R_0,$ we have
$$\mod(R_0)\ge \mod(A_0)=\log s.$$
Similarly, since $A_1=R(x_0,r-\sqrt{c}r-\delta/2, \delta/2)$ 
is a subring of $R_1,$
we have $\mod(R_1)\ge \mod(A_1).$
We now observe that
$$
\mod(A_1)=\log\frac{r-\sqrt{c}r-\delta/2}{\delta/2}
=\log\frac{s^2cr-scr-(s-1)cr/2}{(s-1)cr/2}=\log(2s-1)>\log s.
$$
In view of the relation 
$\mM(\Delta(B,C_j;\Sn))=\omega_{n-1}/(\mod(R_j))^{n-1}$ for $j=0,1,$
we see that
$$
\capa(A,B)\le \frac{\omega_{n-1}}{(\mod(R_0))^{n-1}}
+\frac{\omega_{n-1}}{(\mod(R_1))^{n-1}}
<\frac{2\omega_{n-1}}{(\log s)^{n-1}}=\frac{2^n\omega_{n-1}}{(-\log c)^{n-1}}
$$
Since $\Delta(B,\partial G;G)>\Delta(B, \partial A;A),$
 Lemma \ref{cgqm_5.3} yields
$$
\capa(G,B) \le \capa(A,B)<\frac{2\omega_{n-1}}{(-2\log c)^{n-1}}.
$$
By assumption, $\capa(G,B)=u_{1/2}(x_0)\ge \gamma$ so that
$(-\log c)^{n-1}<2^n\omega_{n-1}/\gamma,$ which is impossible by the
choice of $c.$
Hence, we have shown the required assertion and conclude that $F\in\UP_n(c)$.
\hfill$\square$}
\end{nonsec}

\section{Whitney Cubes and Uniform Perfectness}

Next, we will study the condenser capacity by using Whitney cubes.

\begin{nonsec}{\bf Whitney decomposition. } \label{Wdecmp}
If $G\subset\R^2$ is a bounded domain, we can clearly represent it as 
a countable union of non-overlapping closed squares. 
By the Whitney decomposition theorem, we can choose these squares 
${Q^k_j }\,, k \in  \mathbb{Z}, 0\le j \le  N_k\,, $ so that they 
have pairwise disjoint interiors and sides parallel to the coordinate axes 
and the following properties are fulfilled:
\begin{enumerate}
\item[(1)] every cube $Q^k_j\,,$ $0\le j \le  N_k\,, $ has sidelength $2^{-k}\,,$
\item[(2)] $G = \cup_{k,j} Q^k_j\,,$
\item[(3)] $d(Q^k_j ) \le d(Q^k_j, \partial G) < 4 d(Q^k_j )\,.$
\end{enumerate}
These kinds of squares ${Q^k_j}$ are \emph{Whitney squares} and this definition 
can be clearly extended to the general case $G\subset\R^n$, $n\geq2$, so that 
we will have $n$-dimensional closed hypercubes called \emph{Whitney cubes} 
instead of just squares \cite[Thm 1, p. 167]{ST}. Note that the Whitney cubes of
a domain $G$ resemble in a way hyperbolic balls of $\B^n$ with a constant
radius (Fig. 2).
\end{nonsec}

\begin{figure}[H] %
\centerline{
\scalebox{0.8}{\includegraphics[trim=0 0 0 0,clip]{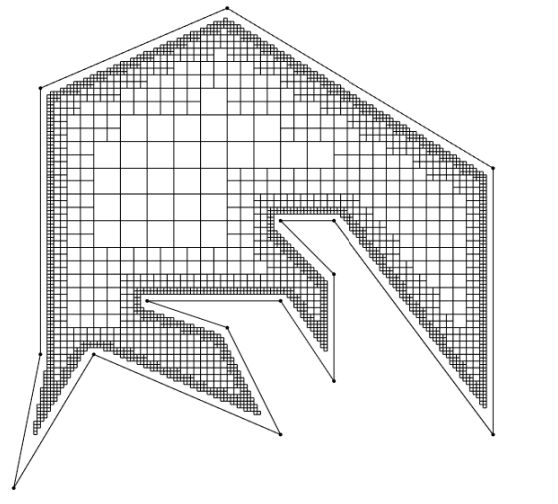}}
}
\caption{Whitney decomposition of a polygon. The picture was generated
with software by
 D. E. Marshall.}
\label{fig:Wsquares}
\end{figure}

\begin{nonsec}{\bf Whitney cubes and $u_\alpha\,.$}
For a domain $G\subset\R^n$, $\alpha\in(0,1)$ and $z\in G$, let 
\begin{align*}
u_\alpha(z)=\capa(G,\overline{B}^n(z,\alpha\,d(z,\partial G)))   
\end{align*}
and $G = \cup Q^k_j$ be a Whitney decomposition. Then $Q^k_j$ 
has a side length $2^{-k}$ and
\begin{align*}
d(Q^k_j) \le d(Q^k_j, \partial G) < 4 d(Q^k_j )=4\cdot2^{-k}\sqrt{n}.    
\end{align*}
If $m^k_j$ is the midpoint of $Q^k_j$, then clearly 
\begin{align}
\label{ineq_1_wc}
&B^n\left(m^k_j,\frac{d(Q^k_j)}{2\sqrt{n}}\right)\subset 
Q^k_j\subset\overline{B}^n\left(m^k_j,\frac{d(Q^k_j)}{2}\right),\\
\label{ineq_2_wc}
&\left(1+\frac{1}{2\sqrt{n}}\right)d(Q^k_j)\leq
d(m^k_j,\partial G)\leq \left(4+\frac{1}{2}\right)d(Q^k_j).
\end{align}
From \eqref{ineq_1_wc} and \eqref{ineq_2_wc}, we see that 
\begin{align*}
&u_\gamma(m^k_j)\leq\capa(\R^n\setminus Q^k_j,\partial G),
\quad
\gamma=\frac{d(Q^k_j)\slash(2\sqrt{n})}{9d(Q^k_j)\slash2}=\frac{1}{9\sqrt{n}},\\
&u_\eta(m^k_j)\geq\capa(\R^n\setminus Q^k_j,\partial G),
\quad
\eta=\frac{d(Q^k_j)\slash2}{(1+1\slash(2\sqrt{n}))d(Q^k_j)}
=\frac{\sqrt{n}}{1+2\sqrt{n}}.
\end{align*}
By Lemma \ref{lemma_moduli_uv}, we can now observe that 
\begin{align*}
u_\gamma(m^k_j)\leq\capa(\R^n\setminus Q^k_j,\partial G)
\leq d_1u_\gamma(m^k_j),\quad
d_1=d_1(n)=\left(\frac{9\sqrt{n}}{1+2\sqrt{n}}\right)^{n-1}.
\end{align*}
\end{nonsec}

The next lemma follows from the the above observations.

\begin{lemma}\label{lemma_A_wc}
For a cube $Q^k_j$ in the Whitney decomposition of the domain $G\subset\R^n$, 
let $m^j_k$ be its midpoint. Then there exists a constant $d_1>0$ 
only depending on $n$ such that
\begin{align*}
u_\gamma(m^k_j)\leq\capa(\R^n\setminus Q^k_j,\partial G)
\leq d_1u_\gamma(m^k_j)   
\end{align*}
holds with $\gamma=1\slash(9\sqrt{n})$.
\end{lemma}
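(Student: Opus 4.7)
My plan is to execute the computation already sketched in the paragraph preceding the lemma, making each step precise. First I will record the two concentric ball inclusions
\[
\overline{B}^n(m^k_j,\gamma\,d(m^k_j,\partial G))\subset Q^k_j
\subset \overline{B}^n(m^k_j,\eta\,d(m^k_j,\partial G))
\]
with $\gamma=1/(9\sqrt{n})$ and $\eta=\sqrt{n}/(1+2\sqrt{n})$, which follow immediately from the inscribed/circumscribed ball radii $d(Q^k_j)/(2\sqrt{n})$ and $d(Q^k_j)/2$ of $Q^k_j$ recorded in \eqref{ineq_1_wc}, combined with the Whitney distance bounds \eqref{ineq_2_wc}; these in turn are direct consequences of the defining property $d(Q^k_j)\le d(Q^k_j,\partial G)<4d(Q^k_j)$ from \ref{Wdecmp}(3).

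Second, I will convert these inclusions into the two capacity inequalities $u_\gamma(m^k_j)\le \capa(\R^n\setminus Q^k_j,\partial G)\le u_\eta(m^k_j)$. Writing each quantity as a modulus of a curve family via Remark \ref{GehringZiem} and using the minorization principle of Lemma \ref{cgqm_5.3}(1), I factor both inequalities through the intermediate modulus $\M(\Delta(Q^k_j,\partial G;G))$: the inner inclusion yields $u_\gamma(m^k_j)\le \M(\Delta(Q^k_j,\partial G;G))$ since the curve family on the left is a subfamily of the one on the right, while each curve from $Q^k_j$ to $\partial G$ in $G$ contains a subcurve from $\partial Q^k_j$ to $\partial G$ in $\R^n\setminus Q^k_j$, namely the portion after its first exit from $Q^k_j$, showing $\M(\Delta(Q^k_j,\partial G;G))\le \capa(\R^n\setminus Q^k_j,\partial G)$. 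The symmetric argument using the outer ball inclusion produces the upper bound. Finally, Lemma \ref{lemma_moduli_uv} applied with $\alpha=\gamma$ and $\beta=\eta$ gives $u_\eta(m^k_j)\le C(n)\,u_\gamma(m^k_j)$ with $C(n)$ depending on $n$ only through the exponent $\log\eta/\log\gamma$, and chaining the three displayed inequalities produces the conclusion with $d_1=C(n)$.

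The step I expect to require the most care is the second one. The condensers $(G,\overline{B}^n(m^k_j,\alpha\,d(m^k_j,\partial G)))$ and $(\R^n\setminus Q^k_j,\partial G)$ are written in ``swapped'' form, with the roles of the open ambient set and the compact distinguished set exchanged, so the translation between them is not a one-line application of domain monotonicity \eqref{domainMono}. Instead one must argue directly at the level of curve families as above; the key verification is that the subcurve after the first exit lies genuinely in the larger set $\R^n\setminus Q^k_j$ and not merely in $G\setminus Q^k_j$, which is what makes the identification with $\capa(\R^n\setminus Q^k_j,\partial G)$ clean. Once this bookkeeping is settled, the rest of the proof is a routine chain of inequalities.
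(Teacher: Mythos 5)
Your approach is essentially the paper's: record the inscribed/circumscribed ball radii, pass to the Whitney distance bounds to get $\overline{B}^n(m^k_j,\gamma\,d(m^k_j,\partial G))\subset Q^k_j\subset\overline{B}^n(m^k_j,\eta\,d(m^k_j,\partial G))$ with $\gamma=1/(9\sqrt n)$, $\eta=\sqrt n/(1+2\sqrt n)$, derive $u_\gamma(m^k_j)\le\capa(\R^n\setminus Q^k_j,\partial G)\le u_\eta(m^k_j)$, and close with Lemma~\ref{lemma_moduli_uv}. Three small points to fix in the write-up, though. First, in the subcurve argument the relevant portion of $\gamma$ is the one after its \emph{last} exit from $Q^k_j$, not the first: after a first exit the curve may re-enter the cube, so that portion need not stay in $\R^n\setminus Q^k_j$. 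Second, the upper bound is not literally the ``symmetric argument''; there you start from a curve in $\R^n\setminus Q^k_j$ joining $\partial G$ to $\partial Q^k_j$, which may well leave $G$, and you extract the subcurve after its last re-entry into $G$ to obtain $\capa(\R^n\setminus Q^k_j,\partial G)\le\M(\Delta(Q^k_j,\partial G;G))$; only then does the outer-ball subfamily inclusion give $\le u_\eta(m^k_j)$. Third, your concluding concern --- that one must verify the subcurve lies ``genuinely in $\R^n\setminus Q^k_j$ and not merely in $G\setminus Q^k_j$'' --- is aimed the wrong way: $G\setminus Q^k_j\subset\R^n\setminus Q^k_j$ is trivial, so that containment is free; the real bookkeeping is the reverse one just described, where a curve in $\R^n\setminus Q^k_j$ need not lie in $G$ at all. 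With these adjustments the chain of inequalities closes exactly as you intended and recovers the paper's lemma.
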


\begin{lemma}\label{lemma_B_wc}
Let $Q^k_j$ be as in Lemma \ref{lemma_A_wc}. Then there exists a constant $d_2>0$ 
only depending on $n$ such that, for all $z\in Q^k_j$,
\begin{align*}
{u_\gamma(z)}/{d_2}\leq\capa(\R^n\setminus Q^k_j,\partial G)\leq d_2u_\gamma(z)   
\end{align*}
holds with $\gamma=1\slash(9\sqrt{n})$.
\end{lemma}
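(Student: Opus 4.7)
The plan is to reduce this to Lemma \ref{lemma_A_wc} by invoking the Harnack inequality for $u_\gamma$ established in Theorem \ref{uHarnack}. Since the bounds of Lemma \ref{lemma_A_wc} hold at the midpoint $m^k_j$, it suffices to compare $u_\gamma(z)$ with $u_\gamma(m^k_j)$ up to a multiplicative constant depending only on $n$, for any $z\in Q^k_j$.

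First I will bound the quasihyperbolic distance $k_G(z, m^k_j)$ by a constant depending only on $n$. For any point $w$ on the line segment $[z, m^k_j]\subset Q^k_j$, property \ref{Wdecmp}(3) gives $d(w,\partial G)\ge d(Q^k_j,\partial G)\ge d(Q^k_j)$, while $|z-m^k_j|\le d(Q^k_j)/2$. Hence
\begin{equation*}
k_G(z, m^k_j)\le \int_{[z,m^k_j]}\frac{|dw|}{d(w,\partial G)}\le \frac{d(Q^k_j)/2}{d(Q^k_j)}=\frac12.
\end{equation*}

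Next I fix $\alpha=\gamma=1/(9\sqrt{n})$ and choose any $s\in(0,1)$ small enough that $(1+s)\alpha+s<1$; Theorem \ref{uHarnack} then yields that $u_\gamma$ is a Harnack function on $G$ with parameters $(s,C)$ where $C=C(n)$. Applying Lemma \ref{HarBd}(1) twice (once in each direction) with the above bound on $k_G(z,m^k_j)$, I obtain a constant $C_0=C_0(n)\ge1$ such that
\begin{equation*}
C_0^{-1}\, u_\gamma(m^k_j)\;\le\; u_\gamma(z)\;\le\; C_0\, u_\gamma(m^k_j), \qquad z\in Q^k_j.
\end{equation*}

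Finally, combining this with the two-sided estimate of Lemma \ref{lemma_A_wc}, namely $u_\gamma(m^k_j)\le \capa(\R^n\setminus Q^k_j,\partial G)\le d_1 u_\gamma(m^k_j)$, gives
\begin{equation*}
\frac{u_\gamma(z)}{C_0}\le u_\gamma(m^k_j)\le\capa(\R^n\setminus Q^k_j,\partial G)\le d_1 u_\gamma(m^k_j)\le d_1 C_0\, u_\gamma(z),
\end{equation*}
so the claim follows with $d_2:=\max\{C_0,\, d_1 C_0\}=d_1 C_0$, which depends only on $n$. There is no substantive obstacle here; the only care needed is in verifying that the chosen Harnack radius $s$ is admissible for $\alpha=1/(9\sqrt n)$ uniformly in $n$ and in tracking that all constants depend only on the dimension.
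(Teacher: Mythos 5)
Your proof is correct and follows essentially the same route as the paper: both reduce the claim to Lemma \ref{lemma_A_wc} by using the Harnack property of $u_\gamma$ from Theorem \ref{uHarnack} together with the quasihyperbolic Harnack-chain estimate of Lemma \ref{HarBd} to compare $u_\gamma(z)$ with $u_\gamma(m^k_j)$. Your explicit computation that $k_G(z,m^k_j)\le 1/2$ via property \ref{Wdecmp}(3) simply fills in a detail the paper leaves implicit in its citation of \cite[Lemma 6.23]{HKV20}.
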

\begin{proof}
By Theorem \ref{uHarnack}, $u_\gamma(z)$ satisfies the Harnack inequality in $G$. 
Therefore by \cite[Lemma 6.23]{HKV20}, there is a constant $d_3>0$ only 
depending on $n$ such that
\begin{align*}
{u_\gamma(z)}/{d_3}\leq u_\gamma(m^k_j)\leq d_3u_\gamma(z).   
\end{align*}
The proof follows now from Lemma \ref{lemma_A_wc}.
\end{proof}

\begin{nonsec}{\bf Proof of Theorem \ref{thmWUP}.}
\label{proof_forNewUP1}
The proof follows from Lemma \ref{lemma_B_wc} and Theorem \ref{newUP1}.
\hfill$\square$
\end{nonsec}


\def\cprime{$'$} \def\cprime{$'$} \def\cprime{$'$}
\providecommand{\bysame}{\leavevmode\hbox to3em{\hrulefill}\thinspace}
\providecommand{\MR}{\relax\ifhmode\unskip\space\fi MR }
\providecommand{\MRhref}[2]{%
  \href{http://www.ams.org/mathscinet-getitem?mr=#1}{#2}
}
\providecommand{\href}[2]{#2}


\begin{thebibliography}{13}

\bibitem{Ah1930}  {\sc L.V.~Ahlfors,}  \emph{ Ein Satz von Henri Cartan 
und seine Anwendung auf die Theorie der meromorphen Funktionen
.} Soc. Scient. Fenn. Comm. Phys.-Math. V.  \textbf{16} (1930),1-19.


\bibitem{Ah}  {\sc L.V.~Ahlfors,}  \emph{ Conformal invariants: 
Topics in Geometric Function Theory.} McGraw-Hill, 1973.
\bibitem{AW}  {\sc F. G. Avkhadiev and K.-J. Wirths,} 
 \emph{Schwarz-Pick type inequalities. }
Frontiers in Mathematics. Birkh\"auser Verlag, Basel, 2009. 
viii+156 pp. ISBN: 978-3-7643-9999-3.
%
%
%

\bibitem{BM}{\sc A.F. Beardon and D. Minda,} 
\emph{The hyperbolic metric and geometric function theory,}
 Proc.   International Workshop on  Quasiconformal Mappings and their 
 Applications (IWQCMA05), 
 (2006), 9-56.
 
\bibitem{BP78}{\sc
A.~F.~Beardon and Ch.~Pommerenke,} \emph{The Poincar\'e metric of plane domains,}
J. London Math. Soc. (2) \textbf{18} (1978), no. 3, 475--483.
 

\bibitem{B}{\sc C. J. Bishop,} \emph{Harmonic measure: algorithms and
 applications.} 
 Proceedings of the International Congress of Mathematicians--
 Rio de Janeiro 2018. Vol. III. Invited lectures, 1511--1537, World Sci. Publ., Hackensack, NJ, 2018.


\bibitem{BJ}{\sc C. J. Bishop and P.W. Jones, } \emph{
Hausdorff dimension and Kleinian groups.}
Acta Math. \textbf{179} (1997),  1--39.

\bibitem{b12}{\sc P. Boyvalenkov, S. Dodunekov, and O. Musin, } 
\emph{A survey on the kissing numbers.} Serdica Math. J. \textbf{38} (2012), 507--522.
 
\bibitem{du} {\sc{V.N. Dubinin,} \emph{Condenser Capacities and 
Symmetrization in Geometric Function Theory,}} Birkh\"auser, 2014.



\bibitem{GM}{\sc J.B. Garnett and D.E. Marshall,}
\emph{Harmonic measure.}
Cambridge Univ. Press, Cambridge, 2008.


\bibitem{GMP}{\sc F.W. Gehring, G.J. Martin, and B.P. Palka,}
\emph{An introduction to the theory of higher dimensional 
quasiconformal mappings}, vol.  \textbf{216} of Math. Surveys 
and Monographs. AMS, Providence, RI, 2017.




\bibitem{GSV}{\sc A.~Golberg, T.~Sugawa, and M.~Vuorinen,} 
\emph{Teichm\"uller's theorem in higher dimensions and its applications}.
Comput. Methods Funct. Theory   \textbf{20} ({2020}) 539-558.


\bibitem{HKV20}
{\sc P.~Hariri, R.~Kl\'en, and M.~Vuorinen,}  \emph{Conformally invariant metrics  
and quasiconformal mappings}, Springer Monographs in Mathematics, Springer, Berlin, 2020.


\bibitem{H}{\sc J. Heinonen,} \emph{Lectures on analysis on metric spaces.} 
Universitext. Springer-Verlag,  2001. 


\bibitem{JV96}{\sc P.~J\"arvi and M.~Vuorinen,} 
\emph{Uniformly perfect sets and quasiregular mappings}, 
J. London Math. Soc. \textbf{54} (1996), 515--529.



  
\bibitem{KLV}{\sc A.
K\"aenm\"aki, J. Lehrb\"ack, and M. Vuorinen,} \emph{
Dimensions, Whitney covers, and tubular neighborhoods. }
Indiana Univ. Math. J. \textbf{62} (2013), no. 6, 1861--1889.

\bibitem{KL}{\sc L. Keen and N. Lakic,}
\emph{Hyperbolic geometry from a local viewpoint.}
London Mathematical Society Student Texts, \textbf{68}. 
Cambridge Univ. Press, Cambridge, 2007.

  
\bibitem{Land}{\sc
N. S. Landkof,}
\emph{Foundations of Modern Potential Theory.}
Springer-Verlag, 1972. 

\bibitem{Lew}{\sc J. L. Lewis,} \emph{Uniformly fat sets. }
Trans. Amer. Math. Soc. \textbf{308}  (1988), no. 1, 177--196.


\bibitem{M}{\sc O. Martio,} \emph{  Capacity and measure densities.} Ann. Acad. Sci. 
 Fenn. Ser. A I Math. \textbf{4} (1979), no. 1, 109--118.

\bibitem{MV}{\sc O. Martio and M. Vuorinen,}
\emph{Whitney cubes, p-capacity, and Minkowski content.} 
Exposition. Math.  \textbf{5} (1987), no. 1, 17--40.
\bibitem{Mat}{\sc P. Mattila,} \emph{Geometry of sets and measures in 
Euclidean spaces. Fractals and  rectifiability.} Cambridge Studies in 
Advanced Mathematics, \textbf{44}. Cambridge University Press, Cambridge, 1995. 
xii+343 pp.
\bibitem{N}{\sc R. Nevanlinna,} \emph{Analytic functions.} Springer-Verlag, 1970.  


\bibitem{Ok}{\sc A. Okounkov,}
\emph{The magic of $8$ and $24.$}
 Proc. ICM 2022, Vol. 1, pp. 492–545

\bibitem{Pom79}{\sc
{Ch}. Pommerenke,}  \emph{Uniformly perfect sets and the {P}oincar\'e metric.}
Arch. Math. \textbf{32} (1979), 192--199.


\bibitem{R1}{\sc  Yu.G. Reshetnyak, }
 \emph{ The concept of capacity in the theory of functions with 
 generalized derivatives.} (Russian) Sibirsk. Mat. \v{Z}. \textbf{10}, 1969, 1109--1138.



\bibitem{R2}{\sc  Yu.G. Reshetnyak, }
 \emph{Space mappings with bounded distortion.}
  Translated from the Russian by H. H. McFaden. Translations 
of Mathematical Monographs, \textbf{73}. AMS, Providence, RI, 1989. xvi+362 pp.


\bibitem{SS}{\sc H. Shiga and T. Sugawa,} \emph{Kleinian groups and geometric
function theory.} In the tradition of Thurston III, Geometry and Dynamics, K. Ohsika and A. Papadopoulos Eds.,
Springer, 2024


\bibitem{ST}{\sc E. Stein,} 
\emph{Singular integrals and differentiability properties of functions}, 
Princeton Mathematical Series, 
No. \textbf{30} Princeton University Press, Princeton, N.J. 1970 xiv+290 pp.




\bibitem{Sugawa98}{\sc
T.~Sugawa,} \emph{Various domain constants related to uniform perfectness}, 
 Complex Variables Theory Appl.\textbf{36} (1998), 311--345.



\bibitem{SVZ}{\sc  T.~Sugawa, M.~Vuorinen, and T.~Zhang,} 
\emph{Conformally invariant complete metrics}.  Math. Proc. 
 Cambridge Philos. Soc. \textbf{174} (2023), no 2, 273-300.
  
\bibitem{V71}{\sc J. V\"ais\"al\"a,}
\emph{Lectures on $n$-Dimensional Quasiconformal Mappings.}
Lecture Notes in Mathematics, \textbf{229}. Springer-Verlag, Berlin, 1971.

%


\bibitem{cgqm}{\sc M. Vuorinen,}
\emph{Conformal geometry and quasiregular mappings.}
Lecture Notes in Mathematics, \textbf{1319}. Springer-Verlag, Berlin, 1988.

\end{thebibliography}
\end{document}